\newtheorem{theorem}{Theorem}[section]
\newtheorem{lemma}[theorem]{Lemma}
\newtheorem{remark}[theorem]{Remark}
\newtheorem{proposition}[theorem]{Propositon}
\newtheorem{definition}[theorem]{Definition}
\newtheorem{corollary}[theorem]{Corollary}
\numberwithin{equation}{section}
\title{Constrained approximate null
controllability of  coupled heat equation with periodic impulse controls\thanks{This work was supported by the
National Natural Science Foundation of China under grants 11771344, 11701138, 11601137.}}
\author{Lijuan Wang\thanks{School of
Mathematics and Statistics, Computational Science Hubei Key Laboratory,
Wuhan University, Wuhan 430072, China;
Email: ljwang.math@whu.edu.cn.}
\and
Qishu Yan\thanks{School of Science,
Hebei University of Technology, Tianjin 300400, China;  Email:
yanqishu@whu.edu.cn.}
\and
Huaiqiang Yu\thanks{School of Mathematics, Tianjin University, Tianjin 300354, China;  Email:  huaiqiangyu@tju.edu.cn.}
}
\date{}
\begin{document}
\selectlanguage{english}
\maketitle

\begin{abstract}
This paper is concerned with the constrained approximate null controllability of heat equation coupled by a real matrix $P$,  where the controls are
impulsive and periodically acted into the system through a series of real matrices  $\{Q_k\}_{k=1}^\hbar$.
The conclusions are given in two cases. In the case that the  controls act  globally into the system,  we prove that the system is  global constrained approximate null controllable
under a spectral condition of $P$ together with a rank condition of  $P$ and $\{Q_k\}_{k=1}^\hbar$; While in the case that  the controls act locally into the system,
we prove the global constrained approximate null controllability under a stronger condition for $P$ and the same rank condition as the above case. Moreover, we prove that the
above mentioned spectral condition of $P$ is necessary for global constrained approximate null controllability of  the control problem considered in this paper.
\end{abstract}
\vskip 10pt
    \noindent
\textbf{Keywords.}
Constrained approximate null controllability, Coupled heat equation, Impulse control
\vskip 10pt
    \noindent
\textbf{2010 Mathematics Subject Classifications.}
35K40, 93B05, 93C20

\section{Introduction}
   We start this section with some notations. Let $\mathbb{R}^+:=[0,+\infty),
\mathbb{N}:=\{0, 1, 2, \dots\}$ and $\mathbb{N}^+:=\{1, 2, \dots\}$. Let
$\Omega\subset \mathbb{R}^N (N\in \mathbb{N}^+)$ be a bounded domain with a
smooth boundary. Let $\hbar\in \mathbb{N}^+$ and $\omega_k\subset \Omega$
 $(1\leq k\leq \hbar)$ be an open and nonempty subset of $\Omega$ with $\omega:=\displaystyle{\cap_{k=1}^{\hbar}} \omega_k\not=\emptyset$. Let $P\in \mathbb{R}^{n\times n}$
and $Q_k\in \mathbb{R}^{n\times m}$ $(n, m\in \mathbb{N}^+, 1\leq k\leq \hbar)$. Denote by $\chi_{E}$ the characteristic function of the set $E\subset\mathbb R^n$.
Write $\triangle_n:=I_n\triangle=\mbox{diag}(\underbrace{\triangle,\triangle,\cdots,\triangle}_n)$, where
$I_n$ is the identity matrix in $\mathbb{R}^{n\times n}$ and
$\triangle$ is the Laplace operator with domain $D(\triangle):=H_0^1(\Omega)\cap H^2(\Omega)$. Define
\begin{equation*}\label{Intro-1}
    A:=\triangle_n+P\;\;\mbox{with}\;\;D(A):=(H_0^1(\Omega)\cap H^2(\Omega))^n.
\end{equation*}
One can easily check that $(A,D(A))$ generates an analytic semigroup $\{e^{At}\}_{t\in\mathbb{R}^+}$
over $(L^2(\Omega))^n$.  Let ${\mathcal L}((L^2(\Omega))^n;(L^2(\Omega))^n)$ denote the space of all linear bounded operators
from $(L^2(\Omega))^n$ to $(L^2(\Omega))^n$. Let $B_\delta(0)$ be the closed ball in $(L^2(\Omega))^n$ with
center at the origin and of radius $\delta>0$. Let
\begin{equation*}
\mathcal{M}_{\hbar}:=\{\{t_j\}_{j\in\mathbb{N}}:t_0=0<t_1<t_2<\cdots \textrm{ and }
t_{j+\hbar}-t_j=t_\hbar\;\;\mbox{for each}\;\; j\in \mathbb N\}.
\end{equation*}
Then for each $\{t_j\}_{j\in \mathbb{N}}\in \mathcal{M}_\hbar$, we have that
\begin{equation}\label{Intro-2}
t_{j+k\hbar}=t_j+t_{k\hbar}=t_j+kt_{\hbar}\;\;\mbox{for all}\;\;
j, k\in \mathbb{N}.
\end{equation}
    Throughout this paper, $C(\cdots)$ denotes a generic positive constant dependent on what
are enclosed in the bracket.
\subsection{Control problem}
   For arbitrarily fixed $\Lambda_\hbar=\{t_j\}_{j\in\mathbb{N}}\in \mathcal{M}_{\hbar}$,
we consider the following impulse controlled heat equation:
\begin{equation}\label{Intro-3}
\begin{cases}
  \displaystyle x'(t)=Ax(t),   & t\in\mathbb R^+\backslash \Lambda_\hbar,   \\
  x(t_j)=x(t_j^-)+\chi_{\omega_{\nu(j)}} Q_{\nu(j)} u_j,   &j\in\mathbb{N}^+,   \\
\end{cases}
\end{equation}
where  $u:=(u_j)_{j\in\mathbb N^+}\in l^\infty(\mathbb N^+;(L^2(\Omega))^m)$,
$x(t_j^-)$ denotes the left limit at $t_j$ for the function $x(\cdot)$,
$\nu(j):=j-[j/\hbar]\hbar$ for each $j\in\mathbb N^+$ and
$[s]:=\max\{k\in\mathbb N:\,k<s\}$ for each $s>0$.  It is clear that
\begin{equation}\label{Intro-4}
\nu(j+k\hbar)=\nu(j)=j\;\;\mbox{for each}\;\;k\in \mathbb{N}^+\;\;\mbox{and}\;\;
1\leq j\leq \hbar.
\end{equation}
Let $\mathcal{B}_\hbar=\{B_k\}_{k=1}^\hbar:=\{\chi_{\omega_k} Q_k\}_{k=1}^\hbar$.
Without loss of generality, we assume that $Q_k\not=0$ for each $1\leq k\leq \hbar$.
In the rest of this paper, we also denote the system (\ref{Intro-3}) by $[A, \mathcal{B}_\hbar, \Lambda_\hbar]$.
The adjoint operators of $A$ and $B_k$ ($1\leq k\leq\hbar$), denoted by $A^*$ and $B^*_k$ ($1\leq k\leq\hbar$) respectively,
can be expressed in the following manner:
\begin{equation}\label{Intro-4(1)}
A^*=\triangle_n+P^\top\;\;\mbox{and}\;\;B_k^*=\chi_{\omega_k} Q_k^\top,\;\;k=1,2,\cdots,\hbar.
\end{equation}
 We denote $\mathcal{B}_\hbar^*:=\{B_k^*\}_{k=1}^{\hbar}$ for simplicity. Here $P^\top$ and $Q_k^\top$ ($1\leq k\leq \hbar$) are the transpose of $P$ and $Q_k$ ($1\leq k\leq \hbar$), respectively.
For each $x_0\in (L^2(\Omega))^n$ and $u\in l^\infty(\mathbb N^+;(L^2(\Omega))^m)$,
we denote the unique solution to the system $[A,\mathcal{B}_{\hbar},\Lambda_{\hbar}]$
with initial state $x(0)=x_0$ as $x(\cdot;x_0,u,\Lambda_{\hbar})$.
Define the control constraint set as follows:
\begin{equation*}
\mathcal U:=\{u=(u_j)_{j\in\mathbb{N}^+}
\in l^\infty(\mathbb N^+;(L^2(\Omega))^m):\, \|u_j\|_{(L^2(\Omega))^m}\leq 1\;\;\mbox{for each}\;\;j\in\mathbb{N}^+\}.
\end{equation*}

In order to present the main results of this paper, we introduce the following definition.
\begin{definition}\label{Intro-5}
    Let $\Lambda_{\hbar}\in \mathcal{M}_{\hbar}$ be fixed.
\begin{enumerate}
  \item[(i)] Let $\varepsilon>0$ be fixed. The system $[A,\mathcal{B}_{\hbar},\Lambda_{\hbar}]$
  is called $\varepsilon$-global constrained approximate null controllable
      (denoted by $(\varepsilon$-$\text{GCAC})_{\hbar}$ later), if
      for any $x_0\in (L^2(\Omega))^n$, there exists a control $u\in \mathcal U$ and $k\in\mathbb{N}$
    so that $x(t_k;x_0,u,\Lambda_\hbar)\in B_{\varepsilon}(0)$.
   \item[(ii)]
     The system $[A,\mathcal{B}_{\hbar},\Lambda_{\hbar}]$ is called global constrained approximate null controllable
      (denoted by $(\text{GCAC})_{\hbar}$ later)
      if $[A,\mathcal{B}_{\hbar},\Lambda_{\hbar}]$ is $(\varepsilon$-$\text{GCAC})_{\hbar}$ for each $\varepsilon>0$.
\end{enumerate}
\end{definition}

\subsection{Main results}
    Let $0<\lambda_1<\lambda_2\leq \lambda_3\leq \cdots$ be the eigenvalues of $-\triangle$ (see \cite{Evans-2}),
$e_i\in D(\triangle)$
be the normal eigenfunction of $-\triangle$ with respect to $\lambda_i$ for each $i\in\mathbb{N}^+$, and $\sigma(P)$
be the spectrum of $P$. The first main result of this paper reads as follows.
\begin{theorem}\label{Intro-6}
   Let $\Lambda_{\hbar}\in\mathcal{M}_{\hbar}$ be  fixed.
   Assume that there is a $k^*\in \mathbb{N}^+$ so that
   \begin{equation}\label{yu-3-6-2}
   \mbox{rank}\;( e^{-P t_1}Q_{\nu(1)},
  e^{-Pt_2}Q_{\nu(2)},\cdots,e^{-P t_{k^*}}Q_{\nu(k^*)})=n.
  \end{equation}
\begin{enumerate}
  \item [(i)]  If $\omega=\Omega$ and $\sigma(P)\subset \{\rho\in\mathbb{C}:\mbox{Re}(\rho) \leq \lambda_1\}$,
  then the system $[A, \mathcal{B}_\hbar, \Lambda_\hbar]$ is $(\text{GCAC})_{\hbar}$.
  \item[(ii)] If $\omega \subsetneq \Omega$ and
  $\langle P\eta,\eta\rangle_{\mathbb{R}^n}\leq \lambda_1\|\eta\|^2_{\mathbb{R}^n}$ for each $\eta\in\mathbb{R}^n$,
  then the system $[A, \mathcal{B}_\hbar, \Lambda_\hbar]$ is $(\text{GCAC})_{\hbar}$.
  \end{enumerate}
\end{theorem}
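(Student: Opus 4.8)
The plan is to isolate the first eigenmode of $-\triangle$, drive its (finite–dimensional) component of the state to $0$ in finitely many impulse periods by combining the rank condition \eqref{yu-3-6-2} with the periodicity \eqref{Intro-2} and the location of $\sigma(P)$, and then let the free dynamics dissipate all the remaining modes. Writing a state as $x=\sum_{i\geq 1}x^{i}e_{i}$ with $x^{i}\in\mathbb R^{n}$, the fact that $\triangle_{n}$ and $P$ commute gives $e^{At}x=\sum_{i\geq1}e^{(P-\lambda_{i}I_{n})t}x^{i}\,e_{i}$, and the solution of $[A,\mathcal B_{\hbar},\Lambda_{\hbar}]$ is $x(t_{k};x_{0},u,\Lambda_{\hbar})=e^{At_{k}}x_{0}+\sum_{j=1}^{k}e^{A(t_{k}-t_{j})}\chi_{\omega_{\nu(j)}}Q_{\nu(j)}u_{j}$. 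Let $\pi_{1}$ be the orthogonal projection of $(L^{2}(\Omega))^{n}$ onto $\mathbb R^{n}e_{1}$ and $\pi_{\perp}:=I-\pi_{1}$. Under both hypotheses $\sigma(P)\subset\{\rho:\mbox{Re}(\rho)\leq\lambda_{1}\}$ (in case (ii) this is implied by $\langle P\eta,\eta\rangle_{\mathbb R^{n}}\leq\lambda_{1}\|\eta\|^{2}_{\mathbb R^{n}}$), hence $\|e^{Pt}\|\leq C(1+t)^{n-1}e^{\lambda_{1}t}$, so $\|e^{(P-\lambda_{i}I_{n})t}\|=e^{-\lambda_{i}t}\|e^{Pt}\|\leq C(1+t)^{n-1}e^{(\lambda_{1}-\lambda_{i})t}$ with $C$ independent of $i$; summing over $i\geq2$ and using $\lambda_{2}>\lambda_{1}$ gives the decay estimate $\|\pi_{\perp}e^{At}\|_{\mathcal L}\leq C(1+t)^{n-1}e^{(\lambda_{1}-\lambda_{2})t}\to0$ as $t\to+\infty$. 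Thus the only part of the state the free flow does not dissipate is $\pi_{1}x$.

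For the heart of the argument, put $M:=P-\lambda_{1}I_{n}$, so $\sigma(M)\subset\{\mbox{Re}(\rho)\leq0\}$ and every eigenvalue of $e^{-Ms}$ has modulus $\geq1$ for $s>0$. The component $z(t):=\int_{\Omega}x(t)e_{1}\in\mathbb R^{n}$ obeys $z'=Mz$ off $\Lambda_{\hbar}$ and $z(t_{j})=z(t_{j}^{-})+Q_{\nu(j)}v_{j}$ with $v_{j}:=\int_{\omega_{\nu(j)}}u_{j}e_{1}\in\mathbb R^{m}$; since $e_{1}>0$ in $\Omega$, $\rho_{0}:=\min_{1\leq k\leq\hbar}\|e_{1}\|_{L^{2}(\omega_{k})}>0$, and any $v_{j}$ with $\|v_{j}\|_{\mathbb R^{m}}\leq\rho_{0}$ is realized by some $u_{j}$ supported in $\omega_{\nu(j)}$ with $\|u_{j}\|\leq1$ (in case (i), $\omega_{k}=\Omega$ and one may take $u_{j}=v_{j}e_{1}$, which moreover has vanishing $\pi_{\perp}$–part). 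Then $z(t_{k^{*}+(P-1)\hbar})=e^{Mt_{k^{*}+(P-1)\hbar}}\big(z_{0}+\sum_{j}e^{-Mt_{j}}Q_{\nu(j)}v_{j}\big)$, so $z$ can be steered to $0$ as soon as $-z_{0}$ lies in the convex symmetric set $\mathcal S_{P}:=\sum_{j=1}^{k^{*}+(P-1)\hbar}e^{-Mt_{j}}Q_{\nu(j)}\,\overline B_{\rho_{0}}$, where $\overline B_{\rho_{0}}\subset\mathbb R^{m}$ is the closed ball of radius $\rho_{0}$. Since $\bigcup_{P}\mathcal S_{P}$ is convex, symmetric and contains a neighbourhood of $0$, it equals $\mathbb R^{n}$ provided its support function is $+\infty$ in every direction. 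Grouping the impulse times into consecutive blocks whose length is a multiple of $\hbar$ and at least $k^{*}$, and using $t_{j+\hbar}=t_{j}+t_{\hbar}$ and $\nu(j+\hbar)=\nu(j)$, one computes for $\xi\neq0$ that $\sup_{y\in\bigcup_{P}\mathcal S_{P}}\langle\xi,y\rangle=\rho_{0}\sum_{p\geq0}\Psi\big((N^{\top})^{p}\xi\big)$, where $N=e^{-M\tau}$ ($\tau$ the block duration) and $\Psi(\eta):=\sum_{l}\big\|(e^{-Mt_{l}}Q_{\nu(l)})^{\top}\eta\big\|_{\mathbb R^{m}}$, the sum over one block. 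By \eqref{yu-3-6-2}, $\Psi(\eta)=0$ forces $\eta\perp\sum_{l}\mbox{range}(e^{-Mt_{l}}Q_{\nu(l)})=\mathbb R^{n}$, so $\Psi$ is a norm and $\Psi(\eta)\geq c_{1}\|\eta\|$; and since every eigenvalue of $N^{\top}$ has modulus $\geq1$, the norms $\|(N^{\top})^{p}\xi\|$ do not tend to $0$, so the series $\sum_{p}\|(N^{\top})^{p}\xi\|$ diverges for every $\xi\neq0$. Hence $\bigcup_{P}\mathcal S_{P}=\mathbb R^{n}$, and there exist $k_{1}\in\mathbb N^{+}$ and admissible $u_{1},\dots,u_{k_{1}}$ with $\pi_{1}x(t_{k_{1}};x_{0},u,\Lambda_{\hbar})=0$.

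To finish, extend the control by $u_{j}=0$ for $j>k_{1}$. For $k\geq k_{1}$ the first mode stays $0$ and $\pi_{\perp}x(t_{k})=e^{A(t_{k}-t_{k_{1}})}\pi_{\perp}x(t_{k_{1}})$, so the decay estimate yields $\|x(t_{k};x_{0},u,\Lambda_{\hbar})\|=\|\pi_{\perp}x(t_{k})\|\leq C(1+(t_{k}-t_{k_{1}}))^{n-1}e^{(\lambda_{1}-\lambda_{2})(t_{k}-t_{k_{1}})}\|\pi_{\perp}x(t_{k_{1}})\|\to0$; choosing $k=k_{2}$ large makes this $<\varepsilon$, which proves $(\mathrm{GCAC})_{\hbar}$ in both (i) and (ii). In case (ii) the only extra point is that the mode–one controls cannot be chosen $\pi_{\perp}$–free, so $\pi_{\perp}x(t_{k_{1}})$ absorbs finitely many additional $L^{2}$ contributions from $u_{1},\dots,u_{k_{1}}$; this still leaves $\pi_{\perp}x(t_{k_{1}})$ a fixed element of $(L^{2}(\Omega))^{n}$, so the final decay step is unaffected, and the coercivity hypothesis $\langle P\eta,\eta\rangle_{\mathbb R^{n}}\leq\lambda_{1}\|\eta\|_{\mathbb R^{n}}^{2}$ is used only to secure $\sigma(P)\subset\{\mbox{Re}(\rho)\leq\lambda_{1}\}$ together with the energy estimate above.

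The step I expect to be the main obstacle is the annihilation of the first mode: because the per–period control budget is fixed (size $\leq1$) while $x_{0}$ is arbitrary, one must verify that iterating the periodic impulse action over sufficiently many periods accumulates enough reach to zero the first mode \emph{exactly}. This is precisely where \eqref{yu-3-6-2} and the spectral location of $\sigma(P)$ interact: the rank condition makes $\Psi$ a genuine norm (bounded below), and $\sigma(P)\subset\{\mbox{Re}(\rho)\leq\lambda_{1}\}$ guarantees that $e^{-Ms}$ does not contract the reachable Minkowski sums $\mathcal S_{P}$, so that their union fills $\mathbb R^{n}$; if instead some $\rho\in\sigma(P)$ had $\mbox{Re}(\rho)>\lambda_{1}$, the corresponding direction of the first mode would be uncontrollable, which is consistent with the claimed necessity of the spectral condition.
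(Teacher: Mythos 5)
Your proposal is correct for part (i) and follows essentially the same route as the paper: project onto $H_1=\mathbb R^n e_1$ and its complement, steer the $H_1$-component exactly to zero via a convex reachability/separation argument (this is the paper's Proposition~\ref{PRO-1}, which your support-function computation with $\Psi$ and $N^\top=e^{-(P^\top-\lambda_1 I_n)t_{\gamma\hbar}}$ reproduces), and use exponential decay on $H_1^\perp$. For part (ii), however, your route is genuinely different from the paper's. The paper abandons the $H_1\oplus H_1^\perp$ decomposition entirely: it invokes the Lumer--Phillips theorem to get $\|e^{At}\|\le 1$ from the coercivity hypothesis $\langle P\eta,\eta\rangle_{\mathbb R^n}\le\lambda_1\|\eta\|^2_{\mathbb R^n}$, converts the interpolation inequality of Proposition~\ref{Preli-2} into a $\delta$-approximate observability estimate whose optimal constant $D_{op}$ is driven to zero by Proposition~\ref{Preli-5}, and concludes by a Hahn--Banach separation argument in the infinite-dimensional state space. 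You instead reuse the finite-dimensional reduction of (i): the controls needed to zero the $H_1$-component now pollute the $H_1^\perp$-component (since $\chi_{\omega_{\nu(j)}}e_1\notin\mathbb R e_1$), but those contributions are absolutely summable because $\|e^{At}\|_{\mathcal L(H_1^\perp;H_1^\perp)}\le C(1+t)^{n-1}e^{(\lambda_1-\lambda_2)t}$, so the free flow after the last impulse still kills the remainder. Notably, your argument never uses the coercivity beyond $\sigma(P)\subset\{\mathrm{Re}(\rho)\le\lambda_1\}$, so if it is airtight it actually proves the strengthening of (ii) that the paper's authors explicitly flag as \emph{open} in their comments (the bullet just before the $\cap_k\omega_k\ne\emptyset$ remark). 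That is either a genuine simplification the authors missed, or a signal that something is off; the step most in need of scrutiny is the assertion that $\sum_{p\geq0}\|(N^\top)^p\xi\|=+\infty$ for every $\xi\ne0$, which rests on the claim that a real matrix $N^\top$ with $\sigma(N^\top)\subset\{|z|\geq1\}$ satisfies $\inf_p\|(N^\top)^p\xi\|>0$ for $\xi\neq 0$; this is true (via a Jordan-basis argument), but it is not as immediate as your one-line justification suggests and should be written out.
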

    The proof of Theorem \ref{Intro-6} depends heavily on the following proposition:
\begin{proposition}\label{Preli-2}
    Let $\Lambda_{\hbar}\in\mathcal{M}_{\hbar}$ and $k\in\mathbb{N}^+$ be fixed. The following two claims are equivalent:
\begin{enumerate}
  \item [(i)] $\text{rank}\left(e^{-P t_1}Q_{\nu(1)},e^{-P t_2}Q_{\nu(2)},\cdots, e^{-Pt_k}Q_{\nu(k)}\right)=n$.
  \item [(ii)] There are two constants $\theta\in(0,1)$ (independent of $k$) and $C(k)>0$ so that
\begin{eqnarray}\label{Preli-2:1}
 &\;&\|e^{A^*t_{k+1}}z\|_{(L^2(\Omega))^n}\nonumber\\
 &\leq& C(k)\left(\displaystyle{\sum_{j=1}^k}
    \|B^*_{\nu(j)}e^{A^*(t_{k+1}-t_j)}z\|_{(L^2(\Omega))^m)}\right)^{\theta}
    \|z\|^{1-\theta}_{(L^2(\Omega))^n}\;\;\mbox{for all}\;\;z\in (L^2(\Omega))^n.
\end{eqnarray}
\end{enumerate}
\end{proposition}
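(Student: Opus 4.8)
I would diagonalise $A^{*}=\triangle_{n}+P^{\top}$ along the eigenbasis $\{e_{i}\}$ of $-\triangle$: writing $z=\sum_{i}z_{i}e_{i}$ with $z_{i}\in\mathbb{R}^{n}$, we have $e^{A^{*}t}z=\sum_{i}e^{-\lambda_{i}t}\big(e^{P^{\top}t}z_{i}\big)e_{i}$, so that (\ref{Preli-2:1}) amounts to a coupled family of finite-dimensional estimates indexed by the $\lambda_{i}$. The rank condition will control each finite-dimensional block, the Lebeau--Robbiano spectral inequality will absorb the spatial cut-offs $\chi_{\omega_{\nu(j)}}$, and the dissipativity of $\{e^{A^{*}t}\}$ will dispose of the high frequencies. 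I treat the two implications separately.

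\textbf{Reformulation and the implication (ii)$\Rightarrow$(i).} Put $M:=(e^{-Pt_{1}}Q_{\nu(1)},\dots,e^{-Pt_{k}}Q_{\nu(k)})\in\mathbb{R}^{n\times(km)}$. Since $\sum_{j=1}^{k}|Q_{\nu(j)}^{\top}e^{-P^{\top}t_{j}}w|^{2}=|M^{\top}w|^{2}$, claim (i) is equivalent to the existence of $c_{0}=c_{0}(k)>0$ with $c_{0}|w|^{2}\le\sum_{j=1}^{k}|Q_{\nu(j)}^{\top}e^{-P^{\top}t_{j}}w|^{2}$ for all $w\in\mathbb{R}^{n}$ (the quadratic form $w\mapsto|M^{\top}w|^{2}$ is bounded below iff $M^{\top}$ is injective iff $\mbox{rank}\,M=n$). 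Granting this, if $\mbox{rank}\,M<n$ I would pick $0\ne w_{0}\in\ker M^{\top}$ and set $z_{0}:=\big(e^{-P^{\top}t_{k+1}}w_{0}\big)e_{1}$; then for every $j$, $B_{\nu(j)}^{*}e^{A^{*}(t_{k+1}-t_{j})}z_{0}=e^{-\lambda_{1}(t_{k+1}-t_{j})}\big(Q_{\nu(j)}^{\top}e^{-P^{\top}t_{j}}w_{0}\big)\chi_{\omega_{\nu(j)}}e_{1}=0$, while $e^{A^{*}t_{k+1}}z_{0}=e^{-\lambda_{1}t_{k+1}}w_{0}e_{1}\ne0$, contradicting (\ref{Preli-2:1}). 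Hence (ii)$\Rightarrow$(i).

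\textbf{The implication (i)$\Rightarrow$(ii).} Fix a cut-off $\Lambda>0$ and split $z=z_{\Lambda}+z^{\Lambda}$ into its low ($\lambda_{i}\le\Lambda$) and high ($\lambda_{i}>\Lambda$) frequency parts. Dissipativity gives $\|e^{A^{*}t_{k+1}}z^{\Lambda}\|\le\|e^{P^{\top}t_{k+1}}\|\,e^{-\Lambda t_{k+1}}\|z\|$. For the low part, applying the reformulated rank condition to $w=e^{P^{\top}t_{k+1}}z_{i}$, together with $Q_{\nu(j)}^{\top}e^{-P^{\top}t_{j}}e^{P^{\top}t_{k+1}}=Q_{\nu(j)}^{\top}e^{P^{\top}(t_{k+1}-t_{j})}$ and $e^{-2\lambda_{i}t_{k+1}}\le e^{-2\lambda_{i}(t_{k+1}-t_{j})}$, gives
\begin{equation*}
\|e^{A^{*}t_{k+1}}z_{\Lambda}\|^{2}\le c_{0}^{-1}\sum_{j=1}^{k}\sum_{\lambda_{i}\le\Lambda}e^{-2\lambda_{i}(t_{k+1}-t_{j})}\,\big|Q_{\nu(j)}^{\top}e^{P^{\top}(t_{k+1}-t_{j})}z_{i}\big|^{2},
\end{equation*}
where for each fixed $j$ the inner sum is, component by component in $\mathbb{R}^{m}$, a sum of squared low-frequency Fourier coefficients of the scalar components of $Q_{\nu(j)}^{\top}e^{A^{*}(t_{k+1}-t_{j})}z$. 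I would then invoke the Lebeau--Robbiano spectral inequality on the nonempty open set $\omega\subset\omega_{\nu(j)}$, followed by $\|\cdot\|_{L^{2}(\omega)}\le\|\cdot\|_{L^{2}(\omega_{\nu(j)})}$ and Parseval for the truncation remainder, to bound this by $Ce^{C\sqrt{\Lambda}}\big(\|B_{\nu(j)}^{*}e^{A^{*}(t_{k+1}-t_{j})}z\|^{2}+e^{-2\Lambda(t_{k+1}-t_{k})}\|z\|^{2}\big)$. With $s_{*}:=t_{k+1}-t_{k}>0$, $D:=\sum_{j=1}^{k}\|B_{\nu(j)}^{*}e^{A^{*}(t_{k+1}-t_{j})}z\|^{2}$ and $E:=\|z\|^{2}$, absorbing $e^{C\sqrt{\Lambda}}$ into $e^{\Lambda s_{*}/2}$ for $\Lambda$ large, everything collapses to $\|e^{A^{*}t_{k+1}}z\|^{2}\le C(k)\big(e^{\Lambda s_{*}/2}D+e^{-\Lambda s_{*}}E\big)$ for all $\Lambda\ge\Lambda_{0}(k)$. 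Finally I would optimise in $\Lambda$ (take $\Lambda=\tfrac{2}{3s_{*}}\log(E/D)$ when $E/D\ge e^{3s_{*}\Lambda_{0}(k)/2}$, and estimate trivially by $E$ otherwise) to get $\|e^{A^{*}t_{k+1}}z\|^{2}\le C(k)\,D^{2/3}E^{1/3}$; since $D\le\big(\sum_{j=1}^{k}\|B_{\nu(j)}^{*}e^{A^{*}(t_{k+1}-t_{j})}z\|\big)^{2}$, taking square roots yields (\ref{Preli-2:1}) with $\theta=\tfrac{2}{3}$, which is independent of $k$ (and any fixed value in $(0,1)$ is attainable by tuning the absorption step).

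\textbf{Where the difficulty lies.} The delicate step is the passage from the spatially localised, infinite-dimensional observations $B_{\nu(j)}^{*}e^{A^{*}(t_{k+1}-t_{j})}z$ to the finitely many modal quantities $Q_{\nu(j)}^{\top}e^{P^{\top}(t_{k+1}-t_{j})}z_{i}$ on which the rank condition bites; this is exactly what the Lebeau--Robbiano spectral inequality is for, and I expect keeping the high-frequency tail it produces under control --- uniformly over the finitely many positive times $t_{k+1}-t_{j}\ge s_{*}$ --- to be the part requiring the most care. The remaining ingredients, namely the finite-dimensional linear algebra and the cut-off optimisation, are routine.
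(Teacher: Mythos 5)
Your proof of (ii)$\Rightarrow$(i) is essentially the one in the paper: both take a tensor $z=v\otimes f$ with $v$ in the kernel of the transposed block matrix; you specialise $f=e_1$, which makes the observation and the final state explicit and lets you skip the backward-uniqueness step the paper uses with a generic $f$. The interesting divergence is in (i)$\Rightarrow$(ii). The paper quotes the Phung--Wang--Zhang interpolation inequality for the scalar heat semigroup (its Lemma~\ref{Preli-1}) as a black box, converts it by Young's inequality into an $\varepsilon$--form, reduces the rank condition to a finite-dimensional observability inequality via Lemma~\ref{Preli-11}, and then absorbs by a single optimisation in $\varepsilon$. You instead rebuild the interpolation estimate from first principles: diagonalise along the Dirichlet eigenbasis, split at a spectral threshold $\Lambda$, apply the Lebeau--Robbiano spectral inequality componentwise on $\omega\subset\omega_{\nu(j)}$ to the low modes, kill the high modes and the truncation remainder with the uniform gain $t_{k+1}-t_j\ge t_{k+1}-t_k>0$, and finally optimise over $\Lambda$. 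Both routes are legitimate; yours is more self-contained and makes transparent that $\theta$ is independent of $k$ (indeed that any $\theta\in(0,1)$ can be reached by tuning the absorption), while the paper's is shorter because it delegates the hard analytic content to the cited lemma. A couple of points would need care if you fleshed this out: the Lebeau--Robbiano constant $Ce^{C\sqrt\Lambda}$ must be uniform over the finitely many times $t_{k+1}-t_j$ and you need the threshold $\Lambda_0(k)$ below which you fall back on the trivial bound to be explicit; and you should say a word about why the quadratic form $w\mapsto|M^\top w|^2$ being bounded below (i.e.\ Lemma~\ref{Preli-11}) is legitimate to treat as an obvious finite-dimensional fact rather than re-prove, as the paper does separately. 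Neither of these is a gap in the idea.
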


\par The interpolation inequality (\ref{Preli-2:1}) in Proposition \ref{Preli-2} is a quantitative form of unique continuation
    for the coupled heat equation.  As we know, it is new.
    The proof of Proposition  \ref{Preli-2} will be given in Section 2.

The second main result of this paper is as follows.
\begin{theorem}\label{20-2-28}
 If there is a $\rho\in \sigma(P)$ so that $\mbox{Re}(\rho)>\lambda_1$, then for each
  $\varepsilon>0$ and $\Lambda_{\hbar}\in\mathcal{M}_{\hbar}$, the system $[A, \mathcal{B}_\hbar, \Lambda_\hbar]$ is not $(\varepsilon$-$\text{GCAC})_{\hbar}$.
\end{theorem}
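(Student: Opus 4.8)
The plan is to show that when some eigenvalue $\rho$ of $P$ has $\mathrm{Re}(\rho)>\lambda_1$, one can exhibit an initial state $x_0$ whose free evolution grows so fast that no admissible (norm-$\le 1$) impulse sequence can pull the state into $B_\varepsilon(0)$ at any impulse time $t_k$. The natural candidate is $x_0 = e_1\otimes\eta$, i.e. the first Laplacian eigenfunction tensored with a suitable vector $\eta\in\mathbb C^n$ (taking real/imaginary parts if necessary), where $\eta$ is chosen relative to $\rho$. First I would analyze the action of the free semigroup on the "first mode": writing any $v\in(L^2(\Omega))^n$ as $v=\sum_i e_i\otimes v_i$ with $v_i\in\mathbb R^n$, the component along $e_1$ evolves by $v_1\mapsto e^{(P-\lambda_1 I_n)t}v_1$, since $A=\triangle_n+P$ acts as $-\lambda_1 I_n+P$ on the $e_1$-eigenspace. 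Under the hypothesis, $P-\lambda_1 I_n$ has an eigenvalue with positive real part, so $\|e^{(P-\lambda_1 I_n)t}\|$ grows exponentially; more precisely there is a left-eigenvector, and a direction $\eta$, along which the projection of $e^{(P-\lambda_1 I_n)t}\eta$ has modulus $\gtrsim e^{\gamma t}$ for some $\gamma>0$.

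The second step is to track this growing mode through the impulsive dynamics. Project the whole equation onto $e_1$: let $y(t):=\langle x(t),e_1\rangle_{L^2(\Omega)}\in\mathbb R^n$ (or $\mathbb C^n$). Between impulses $y'(t)=(P-\lambda_1 I_n)y(t)$, and at $t_j$ the jump is $y(t_j)=y(t_j^-)+\langle \chi_{\omega_{\nu(j)}}Q_{\nu(j)}u_j, e_1\rangle_{L^2(\Omega)}$. The key point is that each impulse perturbs $y$ by a vector of Euclidean norm at most $\|Q_{\nu(j)}\|\,\|e_1\|_{L^2(\omega_{\nu(j)})}\le \|Q_{\nu(j)}\| =: M$, uniformly in $j$ (using $\|u_j\|\le 1$ and $\|e_1\|_{L^2(\Omega)}=1$). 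Solving the recursion,
\[
y(t_k)=e^{(P-\lambda_1I_n)t_k}y(0)+\sum_{j=1}^{k} e^{(P-\lambda_1 I_n)(t_k-t_j)}\,\langle\chi_{\omega_{\nu(j)}}Q_{\nu(j)}u_j,e_1\rangle.
\]
Now I would further project onto the left-eigenvector $\xi$ of $P-\lambda_1 I_n$ with eigenvalue $\mu$, $\mathrm{Re}(\mu)=\mathrm{Re}(\rho)-\lambda_1>0$: set $g(t_k):=\xi^{*}y(t_k)$. Then $|g(t_k)|\ge |\xi^* y(0)|\,e^{\mathrm{Re}(\mu)t_k} - M\|\xi\|\sum_{j=1}^{k} e^{\mathrm{Re}(\mu)(t_k-t_j)}$. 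Choosing $y(0)$ (hence $\eta$) with $\xi^*y(0)\ne 0$, and bounding the sum by a geometric-type series in $e^{\mathrm{Re}(\mu)(t_k-t_j)}$ — here I use that $t_{j+\hbar}-t_j=t_\hbar$, so the gaps are bounded below, $t_{j}-t_{j-1}\ge \delta_0>0$ for all large $j$, making $\sum_{j=1}^{k}e^{\mathrm{Re}(\mu)(t_k-t_j)}$ bounded by a constant times $e^{\mathrm{Re}(\mu)\,t_{\hbar}}/(1-e^{-\mathrm{Re}(\mu)\delta_0})$ independent of $k$ — I get $|g(t_k)|\ge c_1 e^{\mathrm{Re}(\mu)t_k}-c_2$ with $c_1,c_2>0$ independent of $u\in\mathcal U$ and of $k$.

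Finally, since $\|x(t_k)\|_{(L^2(\Omega))^n}\ge |y(t_k)|_{\mathbb R^n}\ge |g(t_k)|/\|\xi\|$, for $k$ large enough $\|x(t_k;x_0,u,\Lambda_\hbar)\|\ge \tfrac{1}{\|\xi\|}(c_1 e^{\mathrm{Re}(\mu)t_k}-c_2)\to\infty$, and for the finitely many small $k$ the quantity $c_1 e^{\mathrm{Re}(\mu)t_k}-c_2$ may be negative but one can instead fix $y(0)$ large enough (scale $x_0$) so that $c_1 e^{\mathrm{Re}(\mu)t_k}-c_2>\varepsilon\|\xi\|$ for \emph{every} $k\in\mathbb N$; this uses that $t_k$ is increasing, so $e^{\mathrm{Re}(\mu)t_k}\ge 1$, and $c_1$ scales with $\|y(0)\|$ while $c_2$ does not. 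Thus no admissible control steers this $x_0$ into $B_\varepsilon(0)$ at any impulse time, contradicting $(\varepsilon\text{-}\mathrm{GCAC})_\hbar$. The main obstacle I anticipate is the bookkeeping when $\mu$ is complex (or $P$ is not diagonalizable, so only a Jordan block is available): one must be careful that passing to real/imaginary parts of $\xi^*y(t_k)$ still yields a genuinely growing real quantity and that the impulse perturbations are controlled in the appropriate (possibly generalized-eigenspace) norm; handling a Jordan block introduces polynomial-in-$t_k$ factors multiplying $e^{\mathrm{Re}(\mu)t_k}$, which only helps the lower bound but must be written down carefully. A secondary technical point is making the uniform lower bound $\delta_0$ on the inter-impulse gaps explicit from the structure of $\mathcal M_\hbar$.
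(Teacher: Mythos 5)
Your overall strategy matches the paper's: decompose the state along the first Laplacian eigenmode $e_1$ so that the impulsive dynamics projects to a finite-dimensional recursion in $\mathbb{R}^n$ driven by $P-\lambda_1 I_n$, further project onto a left eigenvector of $P$ associated with $\rho$, and exhibit a scaled initial datum that no admissible control can steer into $B_\varepsilon(0)$. The paper realizes this by taking $\eta\in\mathbb{C}^n$ with $P^\top\eta=\rho\eta$ and splitting into the two cases $\widehat\eta:=(\eta-\overline\eta)/(2\mathfrak{i})=0$ and $\widehat\eta\neq 0$ to handle a complex $\rho$.

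There is, however, a genuine error in your central estimate. You claim that $\sum_{j=1}^k e^{\mathrm{Re}(\mu)(t_k-t_j)}$ is bounded by a constant independent of $k$ by a ``geometric-series'' argument. This is false when $\mathrm{Re}(\mu)>0$: the exponents $t_k-t_j$ are nonnegative, the largest term is $e^{\mathrm{Re}(\mu)(t_k-t_1)}$, and this diverges as $k\to\infty$. The correct estimate, obtained exactly as in the paper by comparing with $\int_0^{t_k}e^{\mathrm{Re}(\mu)(t_k-s)}\,ds$ (or by a geometric series in $e^{-\mathrm{Re}(\mu)\delta_0}$ after re-indexing $i=k-j$), is
\[
\sum_{j=1}^k e^{\mathrm{Re}(\mu)(t_k-t_j)} \;\le\; C\,e^{\mathrm{Re}(\mu) t_k},\qquad C:=\Bigl(\mathrm{Re}(\mu)\,\min_{1\le j\le\hbar}(t_j-t_{j-1})\Bigr)^{-1},
\]
which is \emph{not} uniformly bounded in $k$. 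Consequently your chain of inequalities yields $|g(t_k)|\ge\bigl(|\xi^* y(0)|-M\|\xi\|C\bigr)\,e^{\mathrm{Re}(\mu)t_k}$, not $c_1 e^{\mathrm{Re}(\mu)t_k}-c_2$ with $c_2$ a fixed constant. This changes the logic of the conclusion: the lower bound is not automatically positive, and one must first ensure that $|\xi^* y(0)|>M\|\xi\|C$.

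Fortunately, the scaling of $x_0$ that you invoke at the very end (to handle the finitely many small $k$) is exactly what repairs the argument under the corrected estimate: choose $\|y(0)\|$ so large that $|\xi^* y(0)|>M\|\xi\|C+\varepsilon\|\xi\|$; then, since $e^{\mathrm{Re}(\mu)t_k}\ge 1$ for every $k\in\mathbb{N}$, one gets $|g(t_k)|>\varepsilon\|\xi\|$ for all $k$ simultaneously, hence $\|x(t_k;x_0,u,\Lambda_\hbar)\|_{(L^2(\Omega))^n}>\varepsilon$ for all $u\in\mathcal U$ and all $k$. This is precisely the role that $\ell\to\infty$ plays in the paper's proof, where the initial data are $\ell\eta e_1$ or $\ell\widehat\eta e_1$. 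So your proposal is correct in spirit, but the sum bound must be corrected and the scaling promoted from an afterthought to the load-bearing step; as written the intermediate claim is wrong and the lower bound it produces would, by itself, not suffice.

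Two smaller remarks. Your worry about Jordan blocks and polynomial factors in $e^{(P-\lambda_1 I_n)t}$ is unfounded here: the argument uses only a left eigenvector of $P$ for $\rho$, which exists for every eigenvalue regardless of defectiveness, and no spectral decomposition of the semigroup is ever invoked. Second, the treatment of a complex $\rho$ should be made explicit rather than waved at: the paper's case split on $\widehat\eta=0$ versus $\widehat\eta\ne 0$ is cleaner than ``take real or imaginary parts if necessary,'' since in the second case one works with the real vector $\widehat\eta$ and uses the elementary bound $\|e^{-P^\top t}\widehat\eta\|_{\mathbb{R}^n}\le e^{-\mathrm{Re}(\rho)t}$, which is exactly what makes the estimate go through.
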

   In \cite{Q-W-Y}, the authors stressed that the controlled system (\ref{Intro-3}) can be understood by two ways:
\begin{enumerate}
  \item [(W1)]With $\hbar$ controllers $\{B_k\}_{k=1}^{\hbar}$ and impulse instants $\{t_j\}_{j\in\mathbb{N}}\in\mathcal{M}_{\hbar}$ in hands, we put periodically the controllers into the system $x'=Ax$ at impulse instants.
  \item [(W2)] With $\hbar$ controllers  $\{B_k\}_{k=1}^{\hbar}$  in hands,
we first choose impulse instants $\{t_j\}_{j\in\mathbb{N}}\in\mathcal{M}_{\hbar}$, and put the controllers periodically into the system $x'=Ax$ at the impulse instants.
\end{enumerate}
\par
    The third main result of this paper is as follows.
\begin{theorem}\label{Intro-7}
   Assume that rank ($\lambda I_n-P, Q_1, \cdots, Q_\hbar$)$=n$ for each $\lambda\in \mathbb{C}$.
\begin{enumerate}
  \item [(i)]  If $\omega=\Omega$ and $\sigma(P)\subset \{\rho\in\mathbb{C}:\mbox{Re}(\rho) \leq \lambda_1\}$,
  then there is $\Lambda_\hbar\in \mathcal{M}_\hbar$ so that
  the system $[A, \mathcal{B}_\hbar, \Lambda_\hbar]$ is $(\text{GCAC})_{\hbar}$.
  \item[(ii)] If $\omega \subsetneq \Omega$ and
  $\langle P\eta,\eta\rangle_{\mathbb{R}^n}\leq \lambda_1\|\eta\|^2_{\mathbb{R}^n}$ for each $\eta\in\mathbb{R}^n$,
  then there is $\Lambda_\hbar\in \mathcal{M}_\hbar$ so that
  the system $[A, \mathcal{B}_\hbar, \Lambda_\hbar]$ is $(\text{GCAC})_{\hbar}$.
\end{enumerate}
\end{theorem}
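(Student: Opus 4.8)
The plan is to deduce the statement from Theorem \ref{Intro-6}: under the rank hypothesis it suffices to exhibit one $\Lambda_\hbar=\{t_j\}_{j\in\mathbb N}\in\mathcal M_\hbar$ and one $k^*\in\mathbb N^+$ with
\[
\mathrm{rank}\big(e^{-Pt_1}Q_{\nu(1)},\,e^{-Pt_2}Q_{\nu(2)},\dots,e^{-Pt_{k^*}}Q_{\nu(k^*)}\big)=n ,
\]
because the assumptions on $P$ and on $\omega$ in parts (i) and (ii) here are exactly those of Theorem \ref{Intro-6}(i) and (ii). So the whole content is the construction of $\Lambda_\hbar$. Write $\sigma(P)=\{\mu_1,\dots,\mu_l\}$ and put
\[
R:=\{\,s>0:\ e^{-s\mu_i}=e^{-s\mu_j}\ \text{for some pair}\ i\ne j\,\}.
\]
For $\mu_i\ne\mu_j$ the equation $e^{-s(\mu_i-\mu_j)}=1$ has a positive solution only when $\mu_i-\mu_j$ is nonzero and purely imaginary, and then its solution set is a single arithmetic progression; hence $R$ is a finite union of discrete sets and $(0,\infty)\setminus R\ne\emptyset$. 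Fix $s\in(0,\infty)\setminus R$, set $t_\hbar:=s$, choose $0<t_1<\dots<t_{\hbar-1}<s$ arbitrarily, and extend by periodicity $t_{j+k\hbar}:=t_j+ks$ for $1\le j\le\hbar$, $k\in\mathbb N$; this is an element of $\mathcal M_\hbar$.

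For a real $n\times n$ matrix $M$ and a subspace $W\subseteq\mathbb R^n$, write $\langle M\,|\,W\rangle:=\sum_{k\ge0}M^kW$ for the smallest $M$-invariant subspace containing $W$. To verify the rank identity for this $\Lambda_\hbar$, observe that the subspaces $V_k:=\mathrm{span}\{\text{columns of}\ e^{-Pt_j}Q_{\nu(j)}:\,1\le j\le k\}$ increase with $k$ inside $\mathbb R^n$, hence stabilize at some $V_\infty=V_{k^*}$, which equals the total sum $\sum_{j\ge1}\mathrm{Ran}\big(e^{-Pt_j}Q_{\nu(j)}\big)$; so it suffices to show $V_\infty=\mathbb R^n$. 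Writing each $j\ge1$ uniquely as $j=r+k\hbar$ with $r\in\{1,\dots,\hbar\}$ and $k\in\mathbb N$, and using $\nu(r+k\hbar)=r$ together with $t_{r+k\hbar}=t_r+ks$ (see (\ref{Intro-2})) and the invertibility of $e^{-Pt_j}$,
\[
V_\infty=\sum_{r=1}^{\hbar}\sum_{k\ge0}e^{-P(t_r+ks)}\,\mathrm{Ran}(Q_r)=\sum_{r=1}^{\hbar}e^{-Pt_r}\,\big\langle\,e^{-sP}\ \big|\ \mathrm{Ran}(Q_r)\,\big\rangle .
\]

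The key point is that, since $s\notin R$, the map $\mu\mapsto e^{-s\mu}$ is injective on $\sigma(P)$, so by interpolation on the spectrum $P$ is a polynomial in $e^{-sP}$ (and of course $e^{-sP}$ is a polynomial in $P$); therefore a subspace is $e^{-sP}$-invariant if and only if it is $P$-invariant, whence $\langle e^{-sP}\,|\,\mathrm{Ran}(Q_r)\rangle=\langle P\,|\,\mathrm{Ran}(Q_r)\rangle$. Being $P$-invariant, this subspace is carried onto itself by the invertible operator $e^{-Pt_r}$, so
\[
V_\infty=\sum_{r=1}^{\hbar}\big\langle P\,\big|\,\mathrm{Ran}(Q_r)\big\rangle=\Big\langle P\ \Big|\ \mathrm{Ran}(Q_1)+\dots+\mathrm{Ran}(Q_\hbar)\Big\rangle=\mathrm{Ran}\big(Q,\,PQ,\dots,P^{\,n-1}Q\big),\qquad Q:=(Q_1,\dots,Q_\hbar).
\]
By the Popov--Belevitch--Hautus criterion the right-hand side is all of $\mathbb R^n$ precisely when $\mathrm{rank}(\lambda I_n-P,Q_1,\dots,Q_\hbar)=n$ for every $\lambda\in\mathbb C$, which is the hypothesis. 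Hence $V_\infty=\mathbb R^n$, the required rank identity holds for this $\Lambda_\hbar$ and for a suitable $k^*$, and Theorem \ref{Intro-6} delivers the conclusion in both cases.

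I expect the only genuinely delicate step to be the algebraic claim that for non-resonant $s$ the matrices $P$ and $e^{-sP}$ share the same invariant subspaces---equivalently, that controllability of the continuous pair $(P,Q)$ is inherited by the sampled pair $(e^{-sP},Q)$; everything else is bookkeeping with increasing chains of subspaces. If preferred, that step can be quoted directly from the classical theory of sampled-data control (Kalman--Ho--Narendra), where it is exactly the statement that non-pathological sampling preserves controllability.
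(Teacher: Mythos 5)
Your proof is correct, and it takes a genuinely different route from the paper's. The paper imports a quantitative lemma from \cite{Q-W} (Lemma~\ref{theo-1}): with $q_j:=q(-P,Q_j)-1$ and $q:=\max_j q_j$, it fixes $t_\hbar$ so small that $q\,t_\hbar<d_{-P}$, then chooses arbitrary intermediate times $0<t_1<\cdots<t_{\hbar-1}<t_\hbar$ and shows directly that $k^*=(q+1)\hbar$ realizes the rank identity (\ref{yu-3-6-2}), because for each $j$ the samples $\{t_{j+(i-1)\hbar}\}_{i=1}^{q(-P,Q_j)}$ fit into a window of length less than $d_{-P}$ and hence recover $\mathrm{span}\{Q_j,PQ_j,\dots,P^{n-1}Q_j\}$. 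This gives both an explicit smallness condition on the sampling period and an explicit $k^*$. You instead pick any period $s$ outside the countable resonance set $R$, and deduce the rank identity from the classical Kalman--Ho--Narendra observation that for non-pathological sampling $P$ and $e^{-sP}$ are each polynomials in the other, hence generate the same invariant subspaces, so $\langle e^{-sP}\mid\mathrm{Ran}\,Q_r\rangle=\langle P\mid\mathrm{Ran}\,Q_r\rangle$ for every $r$. What your route buys: it needs only the non-resonance of the period (an exclusion of a measure-zero set rather than a smallness requirement), it is essentially self-contained modulo Hermite interpolation, and it makes the role of the PBH/Hautus hypothesis transparent as the end of the computation $V_\infty=\langle P\mid\mathrm{Ran}\,Q\rangle$. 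What the paper's route buys: an effective $k^*$ in terms of $q$ and $\hbar$, which is convenient when one wants a concrete bound. Both reduce Theorem~\ref{Intro-7} to Theorem~\ref{Intro-6} in exactly the same way, so the comparison is confined to the construction of $\Lambda_\hbar$ and the verification of (\ref{yu-3-6-2}).

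One small point worth making explicit in your write-up: the step ``$P$ is a polynomial in $e^{-sP}$'' uses that $z\mapsto e^{-sz}$ is injective on $\sigma(P)$ \emph{and} has nonvanishing derivative, so the Jordan structure is preserved under $z\mapsto e^{-sz}$ and a single Hermite interpolant can realize the local inverse of $e^{-sz}$ at every $e^{-s\mu_i}$ up to the required derivative order. This is standard, but since it is the crux of the argument it deserves a sentence or a citation (e.g.\ to the primary-matrix-function calculus) rather than being left implicit.
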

     It is clear that Theorem \ref{Intro-6} and Theorem \ref{20-2-28}
    correspond to (W1), and Theorem \ref{Intro-7} corresponds to (W2).
\subsection{Some comments}
    Some comments are listed as follows.
\begin{itemize}
  \item Constrained controllability is a very interesting topic in control theory.
  Among the existing literature on this topic for the evolution system, the control inputs are usually distributed in the whole time interval,
  i.e., they may affect the control system at each instant of time
  (see, for instance, \cite{Ahmed,Barmish-Schmitendorf,Carja,Evans,Narukawa-1981,Narukawa-1982,Pandolfi,Peichl-Schappacher,Schmitendorf-Barmish,Son-Su}).
   We call them distributed control for simplicity. However, in many cases, impulse control can give an efficient way to deal with systems,
  which can not endure distributed control inputs, or in some applications, it is impossible to provide distributed control inputs.
  Impulse control systems have attracted the attention of many researchers
   (see, for instance, \cite{Duan-Wang-Zhang,Duan-Wang,Q-W,Q-W-Y,Trelet-wang-zhang}). In these works, controllability, stabilizability and optimal control problems of impulse control systems were  studied.  Especially, in \cite{Q-W}, when $\omega_k=\omega$ for each $1\leq k\leq \hbar$, the authors proved that the null controllability of coupled heat equation with impulse controls did not hold, except for the case $\omega=\Omega$. Indeed, for the impulse controlled  coupled heat equation, when
   $\omega\subsetneq\Omega$ and $\omega_k=\omega$ for each $1\leq k\leq \hbar$, we can only expect the approximate controllability (see Theorem 1.3 in \cite{Q-W}). Thus, from  the viewpoint of controllability,  impulse control and distributed control are
   intrinsically different in infinite dimensional setting (for the distributed control case, for example, one can refer to \cite{A-B-D-G}). This motivates us to study the constrained approximate null controllability of the coupled heat equation with impulse controls.

    \item Let $\Lambda_\hbar\in \mathcal{M}_\hbar$ be fixed. The system $[A,\mathcal{B}_{\hbar},\Lambda_{\hbar}]$ is $(\text{GCAC})_\hbar$ if and only if the following
    {\textbf{Claim (H)}} holds.
\vskip 5pt
\begin{itemize}
\item[\textbf{(H)}\quad] \emph{If for  each $\varepsilon>0$ and each $x_0\in (L^2(\Omega))^n$, there exists a control $u\in \mathcal{U}$ and $t\in \mathbb{R}^+$ so that
                       $x(t;x_0,u,\Lambda_{\hbar})\in B_\varepsilon(0)$.}
\end{itemize}
    This conclusion is proved in Appendix A of Section 6.
\item When $\omega=\Omega$ (i.e., the control acts globally into the system $[A,\mathcal{B}_{\hbar},\Lambda_{\hbar}]$), we can study the constrained null
controllability of the system $[A,\mathcal{B}_{\hbar},\Lambda_{\hbar}]$.
\emph{The system  $[A,\mathcal{B}_{\hbar},\Lambda_{\hbar}]$ is called constrained null
controllable if for each $x_0\in (L^2(\Omega))^n$, there is a control $u\in\mathcal{U}$ and $k\in\mathbb{N}$ so that $x(t_k;x_0,u,\Lambda_{\hbar})=0$.} For the constrained null
controllability of the system $[A,\mathcal{B}_{\hbar},\Lambda_{\hbar}]$, we can claim that
\begin{corollary}\label{yu-proposition-b-1}
    Suppose that $\omega=\Omega$ and there exists a $k^*\in\mathbb{N}^+$ so that (\ref{yu-3-6-2}) holds. Then, the system $[A,\mathcal{B}_{\hbar},\Lambda_{\hbar}]$ is constrained null controllable if and only if $\sigma(P)\subset\{\rho\in\mathbb{C}:\mbox{Re}(\rho)\leq \lambda_1\}$.
\end{corollary}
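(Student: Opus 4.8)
The plan is to prove the two implications separately. For the ``only if'' direction, suppose $[A,\mathcal{B}_{\hbar},\Lambda_{\hbar}]$ is constrained null controllable. Then for every $x_0\in(L^2(\Omega))^n$ there are $u\in\mathcal{U}$ and $k\in\mathbb{N}$ with $x(t_k;x_0,u,\Lambda_{\hbar})=0$, and since $0\in B_\varepsilon(0)$ for all $\varepsilon>0$, this says exactly that the system is $(\varepsilon\text{-GCAC})_{\hbar}$ for every $\varepsilon>0$. Were there a $\rho\in\sigma(P)$ with $\mathrm{Re}(\rho)>\lambda_1$, Theorem \ref{20-2-28} would give that the system fails to be $(\varepsilon\text{-GCAC})_{\hbar}$ for every $\varepsilon>0$, a contradiction; hence $\sigma(P)\subset\{\rho\in\mathbb{C}:\mathrm{Re}(\rho)\le\lambda_1\}$.

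For the ``if'' direction I would first establish a quantitative one-block estimate. Since $\omega=\bigcap_{k=1}^\hbar\omega_k=\Omega$ forces $\omega_k=\Omega$ for every $k$, each impulse just adds $Q_{\nu(j)}u_j$, and because $\triangle_n$ and $P$ commute the solution formula $x(t_k;x_0,u,\Lambda_\hbar)=e^{At_k}x_0+\sum_{j=1}^k e^{A(t_k-t_j)}Q_{\nu(j)}u_j$ decouples, after expansion in the orthonormal basis $\{e_i\}_{i\in\mathbb N^+}$ of $-\triangle$ and multiplication by $e^{-Pt_k}$, into the countably many finite-dimensional problems
\[
\sum_{j=1}^k e^{-\lambda_i(t_k-t_j)}\big(e^{-Pt_j}Q_{\nu(j)}\big)\,u_j^i=-\,e^{-\lambda_i t_k}\,x_0^i\qquad\text{in }\mathbb{R}^n,\quad i\in\mathbb N^+ .
\]
Fix $k\ge k^*$. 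By \eqref{yu-3-6-2} the matrix $\big(e^{-Pt_1}Q_{\nu(1)},\dots,e^{-Pt_k}Q_{\nu(k)}\big)$ has rank $n$, so $\Gamma:=\sum_{j=1}^k(e^{-Pt_j}Q_{\nu(j)})(e^{-Pt_j}Q_{\nu(j)})^\top$ is positive definite; write $\gamma>0$ for its least eigenvalue. Taking $(u_j^i)_{j=1}^k$ to be the minimal-norm solution of the $i$-th problem, the key observation is that, because $t_1=\min_{1\le j\le k}t_j$, the Gram matrix $\sum_{j=1}^k e^{-2\lambda_i(t_k-t_j)}(e^{-Pt_j}Q_{\nu(j)})(e^{-Pt_j}Q_{\nu(j)})^\top$ is bounded below by $e^{-2\lambda_i(t_k-t_1)}\Gamma\ge\gamma e^{-2\lambda_i(t_k-t_1)}I_n$; feeding in the source term $e^{-\lambda_i t_k}x_0^i$ yields $\sum_{j=1}^k\|u_j^i\|_{\mathbb{R}^m}^2\le\gamma^{-1}e^{-2\lambda_i t_1}\|x_0^i\|_{\mathbb{R}^n}^2$. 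Summing over $i$ (using $\lambda_i\ge\lambda_1$) then shows that $u_j:=\sum_i u_j^i e_i$ defines an element of $(L^2(\Omega))^m$ and that $\max_{1\le j\le k}\|u_j\|_{(L^2(\Omega))^m}\le M\|x_0\|_{(L^2(\Omega))^n}$ with $M:=\gamma^{-1/2}e^{-\lambda_1 t_1}$. Hence one block of $k$ impulses drives any state of norm at most $1/M$ exactly to $0$ with admissible controls, and by \eqref{Intro-2}--\eqref{Intro-4} the same block, with the same matrices $e^{-Pt_j}Q_{\nu(j)}$ and the same constant $M$, works when started from any impulse instant $t_p$ with $p$ a multiple of $\hbar$.

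To remove the smallness restriction I would invoke Theorem \ref{Intro-6}(i): under the present hypotheses the system is $(\text{GCAC})_\hbar$, so for an arbitrary $x_0$ there are $v\in\mathcal{U}$ and $k_1\in\mathbb{N}$ with $x(t_{k_1};x_0,v,\Lambda_\hbar)\in B_{\varepsilon_1}(0)$, where $\varepsilon_1:=(MC_0)^{-1}$ and $C_0:=\sup_{0\le s\le t_\hbar}\|e^{As}\|<\infty$ (the operator norm being bounded on the compact interval $[0,t_\hbar]$). Continuing with zero controls over the fewer than $\hbar$ impulses between $t_{k_1}$ and the next instant $t_{k_1'}$ with $k_1'$ a multiple of $\hbar$ does not change the state at the impulses and only applies $e^{A(t_{k_1'}-t_{k_1})}$ with $t_{k_1'}-t_{k_1}\le t_\hbar$, so the state at $t_{k_1'}$ has norm at most $C_0\varepsilon_1=1/M$. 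Applying the one-block construction from $t_{k_1'}$ then steers the state to $0$ exactly at $t_{k_1'+k}$ with controls of norm $\le 1$, and concatenating $v$, the zero controls, these controls, and zeros afterwards produces a $u\in\mathcal{U}$ with $x(t_{k_1'+k};x_0,u,\Lambda_\hbar)=0$, which is the desired constrained null controllability.

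The step I expect to be the main obstacle is the uniform-in-$i$ control cost hidden in the one-block estimate: one has to be sure that the conditioning of the $i$-th finite-dimensional problem deteriorates, as $i\to\infty$, no faster than the factor $e^{-\lambda_i t_k}$ by which the target shrinks, so that the family $(u_j^i)_i$ is square-summable in $i$ and, after the preliminary reduction, the resulting $u_j$ satisfies the hard constraint $\|u_j\|_{(L^2(\Omega))^m}\le1$. The telescoping of $e^{-2\lambda_i(t_k-t_1)}$ against $e^{-2\lambda_i t_k}$ --- which exploits that $e^{-2\lambda_i(t_k-t_j)}$ is minimal at $j=1$ and that $t_1>0$ --- is exactly what makes this go through; a secondary bookkeeping point, settled by \eqref{Intro-2}--\eqref{Intro-4}, is that a time-shifted block inherits the same matrices $e^{-Pt_j}Q_{\nu(j)}$ and hence the same constant $M$.
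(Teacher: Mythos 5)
Your proof is correct, but for the ``if'' direction you take a genuinely different route from the paper. The paper's Appendix B argues by duality: it first establishes the observability inequality
\[
\|e^{A^*t_{k^*}}z\|^2_{(L^2(\Omega))^n}\le C(k^*)\sum_{j=1}^{k^*}\|B^*_{\nu(j)}e^{A^*(t_{k^*}-t_j)}z\|^2_{(L^2(\Omega))^m},
\]
using Lemma~\ref{Preli-11} applied pointwise together with the commutativity relations and $\|e^{\triangle_m t}\|\le 1$, and then converts this into exact null controllability with an $\ell^2$-bounded control via a Hahn--Banach/Riesz-representation argument, before invoking $(\text{GCAC})_\hbar$ from Theorem~\ref{Intro-6}(i) and a small bookkeeping step to handle a general initial datum. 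You instead construct the exact control directly: since $\omega=\Omega$ lets the $\chi_\omega$ factors disappear and $\triangle_n$ commutes with $P$, the solution operator decouples over the eigenspaces $\mathbb{R}^n e_i$, and in each one you solve a finite-dimensional linear system by its minimal-norm solution, bound it using the positive-definiteness of the Gram matrix $\Gamma=\sum_j(e^{-Pt_j}Q_{\nu(j)})(e^{-Pt_j}Q_{\nu(j)})^\top$ and the monotonicity of $e^{-2\lambda_i(t_k-t_j)}$, and sum in $i$ (the factor $e^{-2\lambda_i t_1}\le e^{-2\lambda_1 t_1}$ making the sum converge). Both strategies ultimately yield a block of admissible impulse controls that annihilates any state of norm at most $1/M$, and both remove the smallness restriction in the same way (first steer to a small ball by $(\text{GCAC})_\hbar$, coast with zero controls to the next multiple of $\hbar$, then apply the block, using periodicity \eqref{Intro-2}--\eqref{Intro-4} to reuse the same matrices and the same constant). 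The ``only if'' direction is the same in both. What your approach buys is explicitness and the avoidance of the observability/duality machinery, at the cost of being specific to the diagonalizable case $\omega=\Omega$; the paper's duality route is the one that generalizes when the observability inequality is available in non-decoupled situations.
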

     Under the assumptions of Corollary \ref{yu-proposition-b-1},
      it follows from Corollary \ref{yu-proposition-b-1}, $(i)$ in Theorem \ref{Intro-6} and Theorem \ref{20-2-28} that the system $[A,\mathcal{B}_{\hbar},\Lambda_{\hbar}]$ is
     constrained null controllable if and only if it is $(\text{GCAC})_{\hbar}$.
     In Appendix B of Section 6, we will present the proof of Corollary \ref{yu-proposition-b-1}.

    \item
  Recall $P, \{Q_k\}_{1\leq k\leq \hbar}$ and $\Lambda_\hbar=\{t_j\}_{j\in\mathbb N}\in \mathcal M_\hbar$
(see the beginning of this paper). Consider the following impulse controlled ordinary differential equation:
\begin{equation}\label{yu-3-6-1}
\begin{cases}
    y'(t)=Py(t),&t\in\mathbb{R}^+\setminus\Lambda_{\hbar},\\
    y(t_j)=y(t_j^-)+Q_{\nu(j)}v_j,&j\in\mathbb{N}^+,
\end{cases}
\end{equation}
    where $v:=(v_j)_{j\in\mathbb{N}^+}\in l^\infty(\mathbb{N}^+;\mathbb{R}^m)$ and
    $y(\cdot):\mathbb{R}^+\to \mathbb{R}^n$. For each $y_0\in\mathbb{R}^n$ and $v\in l^\infty(\mathbb{N}^+;\mathbb{R}^m)$, we denote the solution of (\ref{yu-3-6-1}) by $y(\cdot;y_0,v,\Lambda_{\hbar})$ with $y(0)=y_0$. Define the control constraint set:
$$
    \mathcal{W}:=\{v=(v_j)_{j\in\mathbb{N}^+}\in l^\infty(\mathbb{N}^+;\mathbb{R}^m):\|v_j\|_{\mathbb{R}^m}\leq 1\;\mbox{for each}\;
    j\in\mathbb{N}^+\}.
$$
    \emph{If for each $y_0\in\mathbb{R}^n$, there is a control $v\in\mathcal{W}$ and $k\in\mathbb{N}$ so that
    $y(t_k;y_0,v,\Lambda_{\hbar})=0$, then we call the system (\ref{yu-3-6-1}) is constrained null controllable.}     Using the idea in \cite[Theorem 2.6]{Evans} and by  similar arguments as those to prove Proposition \ref{PRO-1} and Theorem \ref{20-2-28}, we can show the following result:
\vskip 5pt
\emph{Assume that there is a $k^*\in \mathbb{N}^+$ so that (\ref{yu-3-6-2}) holds. Then the system (\ref{yu-3-6-1}) is constrained null controllable if and only if $\sigma(P)\subset\{\rho\in\mathbb{C}:\mbox{Re}(\rho)\leq 0\}$.}
\item
  We expect to improve $(ii)$ of Theorem~\ref{Intro-6} as follows: If $(a)$ there is a $k^*\in \mathbb{N}^+$ so that (\ref{yu-3-6-2}) holds; $(b)$ $\omega\subsetneq\Omega$; $(c)$ $\sigma(P)\subset \{\rho\in\mathbb{C}:\mbox{Re}(\rho) \leq \lambda_1\}$,
  then the system $[A, \mathcal{B}_\hbar, \Lambda_\hbar]$ is $(\text{GCAC})_{\hbar}$. Unfortunately, we do not know whether this claim is true or not.  Indeed, as we know, for the distributed control in infinite dimensional setting of abstract framework,
  when the control acts locally into the system, how to present the corresponding spectral condition for constrained controllability is also open.


\item In Theorem \ref{Intro-6}, the assumption (\ref{yu-3-6-2}) is necessary, except for the case
   $\sigma(P)\subset \{\rho\in\mathbb{C}:\mbox{Re}(\rho) < \lambda_1\}$.
   Indeed, when $\sigma(P)\subset \{\rho\in\mathbb{C}:\mbox{Re}(\rho) < \lambda_1\}$, we can show easily that
   there are $\mu>0$ and $C>0$ so that  $\|e^{At}\|_{\mathcal{L}((L^2(\Omega))^n;(L^2(\Omega))^n)}
   \leq Ce^{-\mu t}$ for each $t\in\mathbb{R}^+$,  which means the system $[A,\mathcal{B}_{\hbar},\Lambda_{\hbar}]$ is
   $(\text{GCAC})_{\hbar}$ with null control. However, if $(a)$ $\langle P\eta,\eta\rangle_{\mathbb{R}^n}\leq \lambda_1\|\eta\|^2_{\mathbb{R}^n}$ for each $\eta\in\mathbb{R}^n$; $(b)$ there is a $\rho\in \sigma(P)$ so that
   $\mbox{Re}(\rho)=\lambda_1$; $(c)$ (\ref{yu-3-6-2}) does not hold,  then we can provide an example to show
   that the system $[A,\mathcal{B}_{\hbar},\Lambda_{\hbar}]$ is not $(\text{GCAC})_{\hbar}$. This example
   is given in Appendix C of Section 6.

\item
    In our paper, we assume that $\displaystyle{\cap_{k=1}^{\hbar}} \omega_k\not=\emptyset$.
    We only used this assumption in the proof of Proposition \ref{Preli-2}. For the case of $\displaystyle{\cap_{k=1}^{\hbar}} \omega_k=\emptyset$, we do not know whether
    Proposition \ref{Preli-2} holds or not.
\end{itemize}

\subsection{Plan of this paper}
   This paper is organized as follows. In Section 2, we present some preliminary results.
In Sections 3, 4 and 5, we give proofs of Theorem~\ref{Intro-6}, \ref{20-2-28} and \ref{Intro-7},
respectively. In Section 6, we show some appendices.

\section{Preliminaries}
    We first recall the following interpolation inequality for scalar-valued heat equation.
\begin{lemma}\label{Preli-1} (\cite{Phung-Wang-Zhang})
    Let $\{e^{\triangle t}\}_{t\in\mathbb{R}^+}$ be the analytic semigroup generated by $\triangle$
     with domain $H_0^1(\Omega)\cap H^2(\Omega)$. Let $0<t<T<+\infty$
      and $\widetilde\omega\subset \Omega$ be
     a nonempty open subset.  Then there are two constants $\widetilde\theta:=\widetilde \theta(\Omega,\widetilde\omega,T)\in (0,1)$
     and $C(T,t,\Omega,\widetilde\omega)>0$ so that
\begin{equation*}
    \|e^{\triangle t}f\|_{L^2(\Omega)}\leq C(T,t,\Omega,\widetilde\omega)\|\chi_{\widetilde\omega}e^{\triangle t} f\|^{\widetilde\theta}_{L^2(\Omega)}\|f\|_{L^2(\Omega)}^{1-\widetilde\theta}\;\;\mbox{for all}\;\;f\in L^2(\Omega).
\end{equation*}
\end{lemma}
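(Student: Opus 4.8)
This is the scalar heat interpolation inequality, and I would derive it from the \emph{Lebeau--Robbiano spectral inequality} by the classical frequency-truncation argument. Recall the spectral inequality: there are constants $C_0=C_0(\Omega,\widetilde\omega)\ge1$ and $c_0=c_0(\Omega,\widetilde\omega)>0$ such that, for every $\mu>0$ and every finite combination $g=\sum_{\lambda_l\le\mu}a_l e_l$,
\begin{equation}\label{plan-LR}
\|g\|_{L^2(\Omega)}\le C_0\,e^{c_0\sqrt{\mu}}\,\|g\|_{L^2(\widetilde\omega)} .
\end{equation}
This is the one genuinely hard ingredient; it rests on global Carleman estimates for second-order elliptic operators, and I would simply quote it. Everything after \eqref{plan-LR} is soft.

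Fix $f\in L^2(\Omega)$ and $0<t<T$, and write $u:=e^{\triangle t}f=\sum_{l\ge1}e^{-\lambda_l t}f_l\,e_l$ with $f_l:=(f,e_l)_{L^2(\Omega)}$. For a parameter $\mu>0$ to be chosen, split $u=u^{\mathrm{lo}}_\mu+u^{\mathrm{hi}}_\mu$ according to $\lambda_l\le\mu$ and $\lambda_l>\mu$ (finitely many eigenvalues lie below $\mu$). Since $u^{\mathrm{lo}}_\mu$ is a spectral sum below $\mu$, \eqref{plan-LR} gives $\|u^{\mathrm{lo}}_\mu\|_{L^2(\Omega)}\le C_0e^{c_0\sqrt\mu}\|u^{\mathrm{lo}}_\mu\|_{L^2(\widetilde\omega)}\le C_0e^{c_0\sqrt\mu}\bigl(\|\chi_{\widetilde\omega}u\|_{L^2(\Omega)}+\|u^{\mathrm{hi}}_\mu\|_{L^2(\Omega)}\bigr)$, while the high part is damped by the semigroup: using $e^{-2\lambda_l t}\le e^{-2\mu t}$ for $\lambda_l>\mu$,
\begin{equation*}
\|u^{\mathrm{hi}}_\mu\|_{L^2(\Omega)}^2=\sum_{\lambda_l>\mu}e^{-2\lambda_l t}|f_l|^2\le e^{-2\mu t}\sum_{l\ge1}|f_l|^2=e^{-2\mu t}\|f\|_{L^2(\Omega)}^2 .
\end{equation*}
Adding the two estimates (and using $C_0\ge1$) yields, for every $\mu>0$,
\begin{equation}\label{plan-combined}
\|u\|_{L^2(\Omega)}\le C_0e^{c_0\sqrt\mu}\,\|\chi_{\widetilde\omega}u\|_{L^2(\Omega)}+2C_0\,e^{c_0\sqrt\mu-\mu t}\,\|f\|_{L^2(\Omega)} .
\end{equation}

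It then remains to optimise \eqref{plan-combined} in $\mu$. Put $A:=\|\chi_{\widetilde\omega}u\|_{L^2(\Omega)}$ and $B:=\|f\|_{L^2(\Omega)}$; by $L^2$-contractivity of $e^{\triangle t}$, $A\le\|u\|_{L^2(\Omega)}\le B$. We may assume $A>0$ (otherwise $\mu\to+\infty$ in \eqref{plan-combined} forces $u\equiv0$ and the claim is trivial) and $A<B$ (the borderline $A=B$ forces $f\equiv0$ by comparing Fourier coefficients). Choosing $\mu:=\tfrac1t\ln(B/A)>0$ makes $e^{c_0\sqrt\mu-\mu t}B=e^{c_0\sqrt\mu}A$, so \eqref{plan-combined} becomes $\|u\|\le 3C_0e^{c_0\sqrt\mu}A$; and by $2xy\le x^2+y^2$ with $x=c_0/\sqrt{2t}$, $y=\sqrt{\tfrac12\ln(B/A)}$,
\begin{equation*}
c_0\sqrt\mu=c_0\sqrt{\tfrac1t\ln(B/A)}\le\frac{c_0^2}{2t}+\frac12\ln\frac BA ,\qquad\text{hence}\qquad e^{c_0\sqrt\mu}\le e^{c_0^2/(2t)}\Bigl(\tfrac BA\Bigr)^{1/2}.
\end{equation*}
Therefore $\|u\|_{L^2(\Omega)}\le 3C_0e^{c_0^2/(2t)}A^{1/2}B^{1/2}$, which is the asserted inequality with $\widetilde\theta=\tfrac12$ and $C(T,t,\Omega,\widetilde\omega):=3C_0e^{c_0^2/(2t)}$, a quantity that is finite and stays bounded as long as $t$ ranges over a compact subinterval of $(0,T)$. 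In particular $\widetilde\theta$ comes out as a universal constant.

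The main obstacle is not in this reduction at all but in the spectral inequality \eqref{plan-LR}: that is where all the PDE content lives (Carleman estimates, the influence of the geometry of $\widetilde\omega$), whereas the passage \eqref{plan-LR}$\Rightarrow$Lemma~\ref{Preli-1} is elementary. A self-contained alternative avoiding \eqref{plan-LR} would be to prove the interpolation inequality directly via a logarithmic-convexity (``frequency function'') monotonicity for $s\mapsto e^{\triangle s}f$, localised by a Carleman weight; but that again puts a Carleman estimate at the core, and unlike the route above it does not yield as transparent a value for $\widetilde\theta$.
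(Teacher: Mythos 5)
The paper does not actually prove Lemma~\ref{Preli-1}; it is simply quoted from the reference \cite{Phung-Wang-Zhang}, so there is no internal proof to compare your argument against. Your derivation via the Lebeau--Robbiano spectral inequality is correct and self-contained (modulo the quoted spectral inequality, which is indeed where all the hard analysis lives): the low/high frequency split, the semigroup decay $\|u^{\mathrm{hi}}_\mu\|\le e^{-\mu t}\|f\|$, the absorption of $\|u^{\mathrm{hi}}_\mu\|$ into the $\chi_{\widetilde\omega}u$ term before applying the spectral bound, and the optimisation $\mu=t^{-1}\ln(B/A)$ followed by the Young-type bound $c_0\sqrt\mu\le c_0^2/(2t)+\tfrac12\ln(B/A)$ are all carried out cleanly. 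The edge cases $A=0$ and $A=B$ are handled correctly (both force triviality). One small observation: your route produces the universal exponent $\widetilde\theta=1/2$, which is actually stronger than the statement you are proving, since the lemma only asserts some $\widetilde\theta(\Omega,\widetilde\omega,T)\in(0,1)$; this is a well-known feature of the spectral-inequality approach, and it is worth noting that the exponent $1/2$ is not canonical here --- one can tune the split in the Young step to obtain any $\widetilde\theta\in(0,1)$ at the cost of adjusting the prefactor $C(T,t,\Omega,\widetilde\omega)$.
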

    The next lemma presents the equivalence between a rank condition
    and an observability inequality for finite dimensional system.
   \begin{lemma}\label{Preli-11}
 Let  $k\in\mathbb{N}^+$  and $0<\tau_1<\tau_2<\cdots<\tau_k<+\infty$ be fixed. Let  $\widetilde{P}\in\mathbb{R}^{n\times n}$
and $\{\widetilde Q_j\}_{j=1}^k\subset\mathbb{R}^{n\times m}$.
The following two claims are equivalent:
\begin{enumerate}
  \item [(i)] $\text{rank}\;\left(e^{-\widetilde P \tau_1}\widetilde Q_{1},e^{-\widetilde P \tau_2}\widetilde Q_{2},\cdots, e^{-\widetilde P\tau_k}\widetilde Q_{k}\right)=n$.
  \item[(ii)]  There is a constant $C(k)>0$ so that
\begin{equation}\label{20-2-12}
    \|v\|^2_{\mathbb{R}^n}\leq C(k)\sum_{j=1}^k \|\widetilde Q_{j}^\top e^{-\widetilde P^{\top}\tau_j}v\|^2_{\mathbb{R}^m}
    \;\;\mbox{for each}\;\;v\in \mathbb{R}^n.
\end{equation}
\end{enumerate}
\end{lemma}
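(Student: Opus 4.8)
\emph{Proof proposal.} The plan is to read (ii) as the coercivity of the linear map dual to the one whose range is pinned down by (i), so that the equivalence reduces to a finite-dimensional duality. First I would introduce the $n\times(km)$ block matrix
\begin{equation*}
\mathcal{R}:=\left(e^{-\widetilde P\tau_1}\widetilde Q_1,\;e^{-\widetilde P\tau_2}\widetilde Q_2,\;\cdots,\;e^{-\widetilde P\tau_k}\widetilde Q_k\right),
\end{equation*}
obtained by juxtaposing the $k$ blocks $e^{-\widetilde P\tau_j}\widetilde Q_j\in\mathbb{R}^{n\times m}$. Its transpose $\mathcal{R}^\top\in\mathbb{R}^{(km)\times n}$ stacks the blocks $\widetilde Q_j^\top e^{-\widetilde P^\top\tau_j}$, using $(e^{-\widetilde P\tau_j})^\top=e^{-\widetilde P^\top\tau_j}$, so that for every $v\in\mathbb{R}^n$,
\begin{equation*}
\|\mathcal{R}^\top v\|^2_{\mathbb{R}^{km}}=\sum_{j=1}^k\|\widetilde Q_j^\top e^{-\widetilde P^\top\tau_j}v\|^2_{\mathbb{R}^m}.
\end{equation*}
Hence (ii) is exactly the assertion that $\|v\|^2_{\mathbb{R}^n}\le C(k)\|\mathcal{R}^\top v\|^2_{\mathbb{R}^{km}}$ for all $v\in\mathbb{R}^n$.

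Next I would translate (i). The rank condition says $\mathcal{R}$ has full row rank, i.e. $\mathrm{Im}(\mathcal{R})=\mathbb{R}^n$; since $\mathrm{Im}(\mathcal{R})=(\ker\mathcal{R}^\top)^\perp$, this is equivalent to $\ker\mathcal{R}^\top=\{0\}$, that is, to the injectivity of $\mathcal{R}^\top$. It then remains to check the elementary fact that, for the fixed linear map $\mathcal{R}^\top:\mathbb{R}^n\to\mathbb{R}^{km}$, injectivity is equivalent to the existence of a constant $C(k)>0$ with $\|v\|^2_{\mathbb{R}^n}\le C(k)\|\mathcal{R}^\top v\|^2_{\mathbb{R}^{km}}$ for all $v$: the implication ``coercivity $\Rightarrow$ injectivity'' is immediate, while if $\mathcal{R}^\top$ is injective, the continuous function $v\mapsto\|\mathcal{R}^\top v\|_{\mathbb{R}^{km}}$ attains a minimum $c$ on the compact unit sphere of $\mathbb{R}^n$, injectivity forces $c>0$, and homogeneity gives $\|\mathcal{R}^\top v\|_{\mathbb{R}^{km}}\ge c\|v\|_{\mathbb{R}^n}$ for all $v$, so the inequality holds with $C(k)=c^{-2}$ (the subscript merely recording that the constant depends on $k$, and also on $\widetilde P$, the $\widetilde Q_j$ and the $\tau_j$). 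Chaining these equivalences with the identity of the first paragraph yields (i)$\iff$(ii).

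I do not expect a genuine obstacle here: the statement is a routine finite-dimensional duality, the only points needing (minor) care being the transpose identity $(e^{-\widetilde P\tau_j})^\top=e^{-\widetilde P^\top\tau_j}$ and the ``bounded below on the unit sphere'' argument. If a norm-sphere argument is to be avoided, one may instead take $C(k)$ to be the reciprocal of the smallest eigenvalue of the symmetric positive semidefinite matrix $\mathcal{R}\mathcal{R}^\top$, which is strictly positive precisely when $\mathrm{rank}(\mathcal{R})=n$; this makes the constant explicit and the equivalence transparent.
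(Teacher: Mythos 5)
Your proposal is correct and, at its core, rests on the same elementary fact as the paper's Appendix D argument: the rank condition is equivalent to injectivity of the stacked map $v\mapsto(\widetilde Q_j^\top e^{-\widetilde P^\top\tau_j}v)_{j=1}^k$, and injectivity of a linear map between finite-dimensional spaces is equivalent to coercivity, via compactness of the unit sphere. The paper phrases the (i)$\Rightarrow$(ii) direction as a contradiction with a minimizing sequence and a convergent subsequence, whereas you extract the minimum directly (or, in your alternative, read off $C(k)$ as $\lambda_{\min}(\mathcal{R}\mathcal{R}^\top)^{-1}$); these are just two standard packagings of the same argument, with your matrix-theoretic framing making the duality $\mathrm{Im}\,\mathcal{R}=(\ker\mathcal{R}^\top)^\perp$ and the dependence of the constant explicit.
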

    \noindent Lemma \ref{Preli-11} is easy to prove. However, for the sake of completeness, we give its detailed proof
    in Appendix D of Section 6.
%
%
%

Based on Lemma~\ref{Preli-1} and Lemma~\ref{Preli-11}, we present the proof of  Proposition \ref{Preli-2}.

\begin{proof}[Proof of Proposition \ref{Preli-2}] The proof is split into the following two steps.
\vskip 5pt
\emph{Step 1. We show that $(ii)\Rightarrow(i)$.}

By contradiction, there would be a $v^*\in\mathbb{R}^n\setminus\{0\}$ so that
\begin{equation}\label{Preli-2:2}
    Q^{\top}_{\nu(j)}e^{P^\top(t_{k+1}-t_j)}v^*=0\;\;\mbox{for all}\;\;j\in\{1,2,\ldots,k\}.
\end{equation}
Recalling the first equality in (\ref{Intro-4(1)}), we have that
\begin{equation}\label{Preli-2:3}
e^{A^* t}=e^{(\triangle_n+P^\top)t}=e^{\triangle_n t} e^{P^\top t}\;\;\mbox{for all}\;\;t\in\mathbb R^+,
\end{equation}
and
\begin{equation}\label{20-2-13}
e^{\triangle_n s} e^{P^\top t}=e^{P^\top t} e^{\triangle_n  s},\;\;e^{\triangle_m s} Q^\top_{\nu(j)}= Q^\top_{\nu(j)}e^{\triangle_n  s}\;\;\mbox{for all}\;\;
t, s\in \mathbb{R}^+\textrm{ and}\;\;j\in\mathbb N^+,
\end{equation}
where $\triangle_m:=I_m\triangle=\mbox{diag}(\underbrace{\triangle,\triangle,\cdots,\triangle}_m)$.
Take $f\in L^2(\Omega)\setminus\{0\}$ and let $z^*=v^*f$. It is clear that
$z^*\in (L^2(\Omega))^n$. According to \eqref{Preli-2:2}-(\ref{20-2-13}),
for each $j\in\{1,2,\dots,k\}$,
\begin{equation*}\label{Preli-2:4}
\begin{array}{lll}
    B^*_{\nu(j)}e^{A^*(t_{k+1}-t_j)}z^*&=&
    \chi_{\omega_{\nu(j)}}Q_{\nu(j)}^{\top} e^{P^\top(t_{k+1}-t_j)} e^{\triangle_n (t_{k+1}-t_j)} z^*\\
    &=&\chi_{\omega_{\nu(j)}}e^{\triangle(t_{k+1}-t_j)}fQ_{\nu(j)}^{\top}e^{P^\top(t_{k+1}-t_j)}v^*=0.
\end{array}
\end{equation*}
This, along with (\ref{Preli-2:1}), implies that $e^{A^* t_{k+1}}z^*=0$, which, combined with
(\ref{Preli-2:3}) again, indicates that
\begin{equation}\label{Preli-2:5}
    e^{\triangle t_{k+1}} f e^{P^\top t_{k+1}}v^*=0.
\end{equation}
Since $f\neq 0$, by the backward uniqueness of heat equation and (\ref{Preli-2:5}),
we have that $e^{P^\top t_{k+1}}v^*=0$. Then $v^*=0$.
It contradicts to the fact that  $v^*\in\mathbb{R}^n\setminus\{0\}$.
Hence, $(i)$ holds.
\vskip 5pt
\emph{Step 2. We prove that $(i)\Rightarrow(ii)$.}

Indeed, according to Lemma~\ref{Preli-1}, for each $T>0$ and $t\in(0,T)$,
there are two constants $C(T,t,\Omega,\omega)>0$ and ${\theta}:={\theta}(\Omega,\omega,T)\in (0,1)$ so that
\begin{equation}\label{20-2-24}
    \|e^{\triangle t}f\|_{L^2(\Omega)}\leq C(T,t,\Omega,\omega)\|\chi_{\omega}e^{\triangle t} f\|^{{\theta}}_{L^2(\Omega)}\|f\|_{L^2(\Omega)}^{1-{\theta}}\;\;\mbox{for all}\;\;f\in L^2(\Omega).
\end{equation}
  Noting that
$$
\|e^{\triangle T}f\|_{L^2(\Omega)}\leq C(T,t)\|e^{\triangle t}f\|_{L^2(\Omega)} \,\textrm{ for each }t\in[0,T],
$$
by (\ref{20-2-24}) and Young's inequality, for each $T>0$, $t\in(0,T)$ and $\varepsilon>0$, we have that
\begin{equation}\label{Preli-2:8}
    \|e^{\triangle T}f\|^2_{L^2(\Omega)}\leq C(T,t,\Omega,\omega)
    \varepsilon^{-\gamma}\|\chi_\omega e^{\triangle t} f\|^2_{L^2(\Omega)}
    +\varepsilon\|f\|^2_{L^2(\Omega)}\;\;\mbox{for all}\;\;f\in L^2(\Omega),
\end{equation}
where $\gamma:=(1-{\theta})/{\theta}$.

We fix  $z\in (L^2(\Omega))^n\setminus \{0\}$ arbitrarily (When $z=0$, (\ref{Preli-2:1}) is obvious).
On one hand,  by (\ref{Preli-2:3}), \eqref{20-2-13} and Lemma~\ref{Preli-11} (where  $\{\tau_j\}_{j=1}^k$, $\widetilde P$ and
 $\{\widetilde Q_j\}_{j=1}^k$ are replaced by  $\{t_j\}_{j=1}^k$, $P$ and $\{Q_{\nu(j)}\}_{j=1}^k$, respectively),
 there exists a constant $C_1(k)>0$ so that
\begin{eqnarray}\label{Preli-2:10}
\|e^{A^*t_{k+1}}z\|^2_{(L^2(\Omega))^n}
&=&\|e^{P^\top t_{k+1}}e^{\triangle_n t_{k+1}}z\|_{(L^2(\Omega))^n}^2\nonumber\\
&\leq &C_1(k) \sum_{j=1}^k\|Q_{\nu(j)}^\top e^{-P^\top t_j}e^{P^\top t_{k+1}}e^{\triangle_n t_{k+1}}z\|_{(L^2(\Omega))^m}^2\nonumber\\
&=& C_1(k) \sum_{j=1}^k\|e^{\triangle_m t_{k+1}}Q_{\nu(j)}^\top e^{P^\top (t_{k+1}-t_j)}z\|_{(L^2(\Omega))^m}^2.
\end{eqnarray}
Furthermore, it follows from  \eqref{Preli-2:8} (where $T=t_{k+1}$ and $t=t_{k+1}-t_j$, $1\leq j\leq k$) that for each $\varepsilon>0$,
\begin{eqnarray*}
&&\|e^{\triangle_m t_{k+1}}Q_{\nu(j)}^\top e^{P^\top (t_{k+1}-t_j)}z\|_{(L^2(\Omega))^m}^2\\
&\leq& C(t_{k+1},t_{k+1}-t_j,\Omega,\omega)
\varepsilon^{-\gamma}\|\chi_\omega e^{\triangle_m (t_{k+1}-t_j)}
Q_{\nu(j)}^\top e^{P^{\top}(t_{k+1}-t_j)}z\|_{(L^2(\Omega))^m}^2\\
&\;&
+\varepsilon\|Q_{\nu(j)}^\top e^{P^{\top}(t_{k+1}-t_j)}z\|_{(L^2(\Omega))^m}^2.
\end{eqnarray*}
This, along with \eqref{Preli-2:10}, implies that
\begin{eqnarray}\label{Preli-2:13}
&\;&\|e^{A^*t_{k+1}}z\|^2_{(L^2(\Omega))^n}\nonumber\\
    &\leq&C_1(k)
\sum_{j=1}^k \Big(C(t_{k+1},t_{k+1}-t_j,\Omega,\omega)
\varepsilon^{-\gamma}\|\chi_\omega e^{\triangle_m (t_{k+1}-t_j)}
Q_{\nu(j)}^\top e^{P^{\top}(t_{k+1}-t_j)}z\|_{(L^2(\Omega))^m}^2\nonumber\\
&\;&\;\;\;\;\;\;\;\;\;\;\;\;\;\;\;\;\;\;\;
\;\;\;\;\;\;
+\varepsilon\|Q_{\nu(j)}^\top e^{P^{\top}(t_{k+1}-t_j)}z\|_{(L^2(\Omega))^m}^2\Big).
\end{eqnarray}
On the other hand, by (\ref{Preli-2:3}) and \eqref{20-2-13}, for each $1\leq j\leq k$, we have that
\begin{eqnarray}\label{zpreli-2:14}
&\;&\|\chi_\omega e^{\triangle_m (t_{k+1}-t_j)}
Q_{\nu(j)}^\top e^{P^{\top}(t_{k+1}-t_j)}z\|^2_{(L^2(\Omega))^m}\nonumber\\
&\leq&\|\chi_{\omega_{\nu(j)}}Q_{\nu(j)}^\top e^{\triangle_n(t_{k+1}-t_j)}
    e^{P^{\top}(t_{k+1}-t_j)}z\|_{(L^2(\Omega))^m}^2\nonumber\\
 &=&\|B^*_{\nu(j)} e^{A^*(t_{k+1}-t_j)}z\|_{(L^2(\Omega))^m}^2.
\end{eqnarray}
Moreover, it is clear that  there exists a constant $C_2(k)>0$ so that
\begin{eqnarray}
\sum_{j=1}^k\|Q_{\nu(j)}^\top e^{P^{\top}(t_{k+1}-t_j)}z\|_{(L^2(\Omega))^m}^2
\leq C_2(k)\|z\|_{(L^2(\Omega))^n}^2.\label{Preli-2:15}
\end{eqnarray}
It follows from (\ref{Preli-2:13})-(\ref{Preli-2:15}) that
\begin{eqnarray}\label{Preli-2:16}
&\;&\|e^{A^*t_{k+1}}z\|_{(L^2(\Omega))^n}^2\nonumber\\
&\leq& C_1(k)C_2(k)\Big(
    C_3(k)\varepsilon^{-\gamma}\displaystyle{\sum_{j=1}^k}
    \|B^*_{\nu(j)}e^{A^*(t_{k+1}-t_j)}z\|_{(L^2(\Omega))^m}^2
    +\varepsilon\|z\|_{(L^2(\Omega))^n}^2\Big),
\end{eqnarray}
where $C_3(k):=C_2^{-1}(k)\displaystyle{\max_{1\leq j\leq k}}C(t_{k+1},t_{k+1}-t_j,\Omega,\omega)$.

We next claim that
\begin{equation*}
\sum_{j=1}^k\|B^*_{\nu(j)}e^{A^*(t_{k+1}-t_j)}z\|_{(L^2(\Omega))^m}^2\not=0.
\end{equation*}
Otherwise, according to (\ref{Preli-2:16}),
\begin{equation*}
\|e^{A^*t_{k+1}}z\|_{(L^2(\Omega))^n}^2\leq  \varepsilon C_1(k) C_2(k)\|z\|_{(L^2(\Omega))^n}^2.
\end{equation*}
This, along with the arbitrariness of $\varepsilon$, implies that $e^{A^*t_{k+1}}z=0$, which,
combined with (\ref{Preli-2:3}) and the backward uniqueness of heat equation, indicates that
$z=0$. It contradicts to the fact that $z\not=0$.

Let
\begin{equation*}
\varepsilon:=\Big(C_3(k)\displaystyle{\sum_{j=1}^k}\|B^*_{\nu(j)}e^{A^*(t_{k+1}-t_j)}z\|^2_{(L^2(\Omega))^m}\Big/
    \|z\|_{(L^2(\Omega))^n}^2\Big)^{\frac{1}{\gamma+1}}.
\end{equation*}
By (\ref{Preli-2:16}), we conclude  that
\begin{eqnarray}\label{Preli-2:17}
&\;&\|e^{A^*t_{k+1}}z\|_{(L^2(\Omega))^n}^2\nonumber\\
&\leq& 2 C_1(k)C_2(k) C_3^{\frac{1}{\gamma+1}}(k)
    \Big(\displaystyle{\sum_{j=1}^k}\|B^*_{\nu(j)}e^{A^*(t_{k+1}-t_j)}
    z\|^2_{(L^2(\Omega))^m}\Big)^{\frac{1}{\gamma+1}}
    \|z\|_{(L^2(\Omega))^n}^{\frac{2\gamma}{\gamma+1}}\nonumber\\
&\leq& 2 C_1(k)C_2(k) C_3^{\frac{1}{\gamma+1}}(k)
    \Big(\displaystyle{\sum_{j=1}^k}\|B^*_{\nu(j)}e^{A^*(t_{k+1}-t_j)}
    z\|_{(L^2(\Omega))^m}\Big)^{\frac{2}{\gamma+1}}
    \|z\|_{(L^2(\Omega))^n}^{\frac{2\gamma}{\gamma+1}}.
\end{eqnarray}
Since $\gamma=(1-\theta)/\theta$,
(\ref{Preli-2:1}) follows from (\ref{Preli-2:17}) with
$C(k):=\sqrt{2 C_1(k)C_2(k) C_3^{\frac{1}{\gamma+1}}(k)}$.

Hence,  we finish the proof of Step 2.
\par

In summary, we complete the proof of this proposition.
\end{proof}

The following definition plays an important role in the proof of Theorem~\ref{Intro-6}.
\begin{definition}\label{Preli-3}
Let $\Lambda_{\hbar}\in \mathcal{M}_{\hbar}$ be fixed.
Let $\delta>0$ and $k\in\mathbb{N}^+$.
If there is a constant $D(k,\delta)\geq 0$ so that for all $z\in (L^2(\Omega))^n$,
\begin{equation}\label{Preli-3:1}
\|e^{A^*t_{k}}z\|_{(L^2(\Omega))^n}
\leq D(k,\delta)\sum_{j=1}^{k}
\|B^*_{\nu(j)}e^{A^*(t_k-t_j)}z\|_{(L^2(\Omega))^m}+\delta\|z\|_{(L^2(\Omega))^n},
\end{equation}
then we call $[A^*,\mathcal{B}^*_{\hbar},\Lambda_{\hbar}]$  $\delta$-approximate observable in $[0,t_k]$,
$D(k,\delta)$ is called a $\delta$-observability constant in $[0,t_{k}]$, and
\begin{equation*}
D_{op}(k,\delta):=\inf\{D(k,\delta):
D(k,\delta)\;\;\mbox{satisfies}\;\;(\ref{Preli-3:1})\}
\end{equation*}
is called the optimal $\delta$-observability constant in $[0,t_k]$.

\end{definition}

\begin{remark}\label{Preli-4}
Let $\Lambda_\hbar\in \mathcal{M}_\hbar$ and $k\in \mathbb{N}^+$ be fixed.
\begin{itemize}
\item[(i)]  Let $\delta>0$. If $[A^*,\mathcal{B}_\hbar^*,\Lambda_\hbar]$ is $\delta$-approximate observable in
$[0,t_k]$, then for all $z\in (L^2(\Omega))^n$,
\begin{equation*}
\|e^{A^*t_{k}}z\|_{(L^2(\Omega))^n}
\leq D_{op}(k,\delta)\sum_{j=1}^{k}
\|B^*_{\nu(j)}e^{A^*(t_k-t_j)}z\|_{(L^2(\Omega))^m}+\delta\|z\|_{(L^2(\Omega))^n}.
\end{equation*}
\item[(ii)] Let $0<\delta_1<\delta_2<+\infty$. If $[A^*,\mathcal{B}_\hbar^*,\Lambda_\hbar]$ is $\delta_1$-approximate observable in
$[0,t_k]$, then it is also $\delta_2$-approximate observable in $[0,t_k]$ and
\begin{equation*}
D_{op}(k,\delta_2)\leq D_{op}(k,\delta_1).
\end{equation*}
\end{itemize}

\end{remark}

\begin{proposition}\label{Preli-5}
Let $\Lambda_\hbar\in \mathcal{M}_\hbar$,
$\delta>0$ and $\gamma\in \mathbb{N}^+$. Suppose that $[A^*,\mathcal{B}_{\hbar}^*,\Lambda_{\hbar}]$ is
$\delta$-approximate observable in $[0,t_{\gamma\hbar}]$. Then for each $k\in\mathbb{N}^+$,
$[A^*,\mathcal{B}_{\hbar}^*,\Lambda_{\hbar}]$ is $\delta_k$-approximate observable in
$[0,t_{k\gamma\hbar}]$ with observability constant $D(k\gamma\hbar,\delta_k)$, where
\begin{equation}\label{Preli-5:1}
\delta_k:=\delta\left(\displaystyle{\sum_{i=0}^{k-1}}
\|e^{A^*t_{i\gamma\hbar}}\|_{{\mathcal L}((L^2(\Omega))^n;(L^2(\Omega))^n)}\right)\Big/
    \left(\displaystyle{\sum_{i=0}^{k-1}}\|e^{A^*t_{i\gamma\hbar}}\|^{-1}_{{\mathcal L}((L^2(\Omega))^n;(L^2(\Omega))^n)}\right),
\end{equation}
and
\begin{equation}\label{Preli-5:2}
D(k\gamma\hbar,\delta_k):=D_{op}(\gamma\hbar,\delta)\Big/
   \left(\displaystyle{\sum_{i=0}^{k-1}}\|e^{A^*t_{i\gamma\hbar}}\|_{{\mathcal L}((L^2(\Omega))^n;(L^2(\Omega))^n)}^{-1}\right).
\end{equation}

\end{proposition}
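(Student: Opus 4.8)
The plan is to split the long interval $[0,t_{k\gamma\hbar}]$ into the $k$ consecutive ``period blocks'' $[t_{i\gamma\hbar},t_{(i+1)\gamma\hbar}]$, $0\le i\le k-1$, to apply the hypothesis (the $\delta$-approximate observability estimate on $[0,t_{\gamma\hbar}]$, in its optimal-constant form from Remark~\ref{Preli-4}(i)) on each block, and then to recombine the $k$ resulting estimates by a weighted average whose weights are tuned so that the constant in front of the observation terms collapses to a single value. Fix $z\in(L^2(\Omega))^n$; since \eqref{Preli-3:1} on $[0,t_{k\gamma\hbar}]$ is trivial for $z=0$, assume $z\neq0$. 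Write $m_i:=\|e^{A^*t_{i\gamma\hbar}}\|_{\mathcal{L}((L^2(\Omega))^n;(L^2(\Omega))^n)}$, so $m_0=1$ and each $m_i\in(0,\infty)$ (a $C_0$-semigroup does not vanish), and set $w_i:=e^{A^*t_{i\gamma\hbar}}z$ for $0\le i\le k$.

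First I would derive the block estimates. For each $i\in\{0,\dots,k-1\}$, apply the $\delta$-approximate observability inequality on $[0,t_{\gamma\hbar}]$ with $w_i$ in place of $z$. Using $t_{(i+1)\gamma\hbar}=t_{i\gamma\hbar}+t_{\gamma\hbar}$ (the first relation in \eqref{Intro-2}) and the semigroup law, one has $e^{A^*t_{\gamma\hbar}}w_i=w_{i+1}$ and $e^{A^*(t_{\gamma\hbar}-t_l)}w_i=e^{A^*(t_{(i+1)\gamma\hbar}-t_l)}z$ for $1\le l\le\gamma\hbar$, and together with $\|w_i\|_{(L^2(\Omega))^n}\le m_i\|z\|_{(L^2(\Omega))^n}$ this yields
$$\|w_{i+1}\|_{(L^2(\Omega))^n}\ \le\ D_{op}(\gamma\hbar,\delta)\,S_i+\delta\,m_i\|z\|_{(L^2(\Omega))^n},\qquad S_i:=\sum_{l=1}^{\gamma\hbar}\big\|B^*_{\nu(l)}e^{A^*(t_{(i+1)\gamma\hbar}-t_l)}z\big\|_{(L^2(\Omega))^m}.$$
The key bookkeeping is to re-index $S_i$ relative to $[0,t_{k\gamma\hbar}]$: for $1\le l\le\gamma\hbar$ set $j:=l+(k-1-i)\gamma\hbar$; then \eqref{Intro-2} gives $t_{(i+1)\gamma\hbar}-t_l=t_{k\gamma\hbar}-t_j$, \eqref{Intro-4} gives $\nu(j)=\nu(l)$, and $(i,l)\mapsto j$ maps $\{0,\dots,k-1\}\times\{1,\dots,\gamma\hbar\}$ bijectively onto $\{1,\dots,k\gamma\hbar\}$ (as $i$ runs over $0,\dots,k-1$ the index blocks $\{(k-1-i)\gamma\hbar+1,\dots,(k-i)\gamma\hbar\}$ tile $\{1,\dots,k\gamma\hbar\}$). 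Hence $\sum_{i=0}^{k-1}S_i=\Sigma$, where $\Sigma:=\sum_{j=1}^{k\gamma\hbar}\|B^*_{\nu(j)}e^{A^*(t_{k\gamma\hbar}-t_j)}z\|_{(L^2(\Omega))^m}$ is the observation sum occurring in \eqref{Preli-3:1} on $[0,t_{k\gamma\hbar}]$.

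Next I would propagate each block estimate to the final time and recombine. From $t_{k\gamma\hbar}=t_{(i+1)\gamma\hbar}+t_{(k-1-i)\gamma\hbar}$ one gets $w_k=e^{A^*t_{(k-1-i)\gamma\hbar}}w_{i+1}$, so $\|w_k\|_{(L^2(\Omega))^n}\le m_{k-1-i}\|w_{i+1}\|_{(L^2(\Omega))^n}$; combining with the block estimate,
$$\|w_k\|_{(L^2(\Omega))^n}\ \le\ m_{k-1-i}\,D_{op}(\gamma\hbar,\delta)\,S_i+\delta\,m_{k-1-i}m_i\|z\|_{(L^2(\Omega))^n}\qquad(0\le i\le k-1).$$
I then average these $k$ inequalities against the weights $\mu_i:=m_{k-1-i}^{-1}\big/\sum_{l=0}^{k-1}m_l^{-1}$, which are nonnegative and sum to $1$ because $\{k-1-i:0\le i\le k-1\}=\{0,\dots,k-1\}$. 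Since $\mu_i m_{k-1-i}=\big(\sum_{l=0}^{k-1}m_l^{-1}\big)^{-1}$ is independent of $i$, the average of the $D_{op}$-terms becomes $\big(\sum_{l=0}^{k-1}m_l^{-1}\big)^{-1}\sum_iS_i=\big(\sum_{l=0}^{k-1}m_l^{-1}\big)^{-1}\Sigma$ and the average of the $\delta$-terms becomes $\delta\big(\sum_{i=0}^{k-1}m_i\big)\big(\sum_{l=0}^{k-1}m_l^{-1}\big)^{-1}\|z\|_{(L^2(\Omega))^n}$. Recalling $w_k=e^{A^*t_{k\gamma\hbar}}z$, this is exactly \eqref{Preli-3:1} on $[0,t_{k\gamma\hbar}]$ with $\delta_k$ and $D(k\gamma\hbar,\delta_k)$ as in \eqref{Preli-5:1}--\eqref{Preli-5:2}; since $z\in(L^2(\Omega))^n$ was arbitrary, the proposition follows.

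The hard part will be the index bookkeeping in the first step: one must use the two periodicity relations \eqref{Intro-2} (for the impulse times $t_j$) and \eqref{Intro-4} (for $\nu$) to identify $t_{(i+1)\gamma\hbar}-t_l$ with $t_{k\gamma\hbar}-t_j$ and $B^*_{\nu(l)}$ with $B^*_{\nu(j)}$, and to check that the $k$ blocks of indices exactly tile $\{1,\dots,k\gamma\hbar\}$ so that $\sum_iS_i=\Sigma$. Everything else is the semigroup law, the triangle inequality, and the elementary identity $\mu_i m_{k-1-i}=\mathrm{const}$; the weights $\mu_i$ are chosen precisely so that this constant factors out and the bound telescopes into the stated form (for which one also notes $D(k\gamma\hbar,\delta_k)\le D_{op}(\gamma\hbar,\delta)$, using $m_0=1$).
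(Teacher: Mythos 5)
Your proof is correct and is essentially the same as the paper's: you decompose $[0,t_{k\gamma\hbar}]$ into $k$ periodic blocks, apply the optimal-constant form of the $\delta$-observability on each shifted block $e^{A^*t_{i\gamma\hbar}}z$, use \eqref{Intro-2} and \eqref{Intro-4} to re-index the observation terms into the global sum, propagate to $t_{k\gamma\hbar}$, and recombine. The only cosmetic difference is that you take a convex combination with weights $\mu_i=m_{k-1-i}^{-1}/\sum_l m_l^{-1}$, whereas the paper multiplies the $i$-th inequality by $m_{k-1-i}^{-1}$, sums, and then divides by $\sum_l m_l^{-1}$ --- identical arithmetic, presented in a different order.
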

\begin{proof} Since $[A^*,\mathcal{B}_{\hbar}^*,\Lambda_{\hbar}]$
is $\delta$-approximate observable in $[0,t_{\gamma\hbar}]$,
by $(i)$ in Remark~\ref{Preli-4}, we have that for each $z\in (L^2(\Omega))^n$,
\begin{equation}\label{Preli-5:3}
\|e^{A^*t_{\gamma\hbar}}z\|_{(L^2(\Omega))^n}
\leq D_{op}(\gamma\hbar,\delta)\sum_{j=1}^{\gamma\hbar}
\|B^*_{\nu(j)}e^{A^*(t_{\gamma\hbar}-t_j)}z\|_{(L^2(\Omega))^m}+\delta\|z\|_{(L^2(\Omega))^n}.
\end{equation}
We arbitrarily fix $k\in\mathbb{N}^+$. On one hand, for each $1\leq i\leq k$,
it follows from (\ref{Preli-5:3}), (\ref{Intro-2}) and (\ref{Intro-4}) that
\begin{eqnarray}\label{Preli-5:4}
&\;&\|e^{A^*t_{(k-i+1)\gamma\hbar}}z\|_{(L^2(\Omega))^n}\nonumber\\
&\leq& D_{op}(\gamma\hbar,\delta)\displaystyle{\sum_{j=1}^{\gamma\hbar}}\|B^*_{\nu(j)}
    e^{A^*(t_{(k-i+1)\gamma\hbar}-t_j)}z\|_{(L^2(\Omega))^m}
    +\delta\|e^{A^*(t_{(k-i+1)\gamma\hbar}-t_{\gamma\hbar})}z\|_{(L^2(\Omega))^n}
    \nonumber\\
    &=&D_{op}(\gamma\hbar,\delta)\displaystyle{\sum_{j=1}^{\gamma\hbar}}\|B^*_{\nu(j)}
    e^{A^*(t_{k\gamma\hbar}-t_{(i-1)\gamma\hbar+j})}z\|_{(L^2(\Omega))^m}
    +\delta\|e^{A^*t_{(k-i)\gamma\hbar}}z\|_{(L^2(\Omega))^n}\nonumber\\
    &=&D_{op}(\gamma\hbar,\delta)
    \displaystyle{\sum_{j=(i-1)\gamma\hbar+1}^{i\gamma\hbar}}\|B^*_{\nu(j)}
    e^{A^*(t_{k\gamma\hbar}-t_j)}z\|_{(L^2(\Omega))^m}
    +\delta\|e^{A^*t_{(k-i)\gamma\hbar}}z\|_{(L^2(\Omega))^n}.
\end{eqnarray}
On the other hand, for each $1\leq i\leq k$, by (\ref{Intro-2}), we have that
\begin{eqnarray*}
\|e^{A^*t_{k\gamma\hbar}}z\|_{(L^2(\Omega))^n}&=&
    \|e^{A^*(t_{(i-1)\gamma\hbar}+t_{(k-i+1)\gamma\hbar})}z\|_{(L^2(\Omega))^n}\\
&\leq&\|e^{A^*t_{(i-1)\gamma\hbar}}\|_{{\mathcal L}((L^2(\Omega))^n;(L^2(\Omega))^n)}
    \|e^{A^*t_{(k-i+1)\gamma\hbar}}z\|_{(L^2(\Omega))^n}
\end{eqnarray*}
and
\begin{equation*}
    \|e^{A^*t_{(k-i)\gamma\hbar}}z\|_{(L^2(\Omega))^n}\leq
    \|e^{A^*t_{(k-i)\gamma\hbar}}\|_{{\mathcal L}((L^2(\Omega))^n;(L^2(\Omega))^n)}
    \|z\|_{(L^2(\Omega))^n}.
\end{equation*}
These, along with (\ref{Preli-5:4}), imply that for each $1\leq i\leq k$,
\begin{equation*}\label{Preli-5:5}
\begin{array}{lll}
&&\|e^{A^*t_{(i-1)\gamma\hbar}}\|^{-1}_{{\mathcal L}((L^2(\Omega))^n;(L^2(\Omega))^n)}\|e^{A^*t_{k\gamma\hbar}}z\|_{(L^2(\Omega))^n}\\
&\leq& D_{op}(\gamma\hbar,\delta)\displaystyle{\sum_{j=(i-1)\gamma\hbar+1}^{i\gamma\hbar}}
\|B^*_{\nu(j)}e^{A^*(t_{k\gamma\hbar}-t_j)}z\|_{(L^2(\Omega))^m}
+\delta\|e^{A^*t_{(k-i)\gamma\hbar}}\|_{{\mathcal L}((L^2(\Omega))^n;(L^2(\Omega))^n)}\|z\|_{(L^2(\Omega))^n}.
\end{array}
\end{equation*}
Summing the above inequality with respect to $i$ from $1$ to $k$, we obtain that
\begin{eqnarray*}
&\;&\left(\sum_{i=1}^{k}\|e^{A^*t_{(i-1)\gamma\hbar}}\|_{{\mathcal L}((L^2(\Omega))^n;(L^2(\Omega))^n)}^{-1}\right)
\|e^{A^*t_{k\gamma\hbar}}z\|_{(L^2(\Omega))^n}\\
&\leq&D_{op}(\gamma\hbar,\delta)\sum_{i=1}^{k\gamma\hbar}\|B^*_{\nu(i)}e^{A^*(t_{k\gamma\hbar}-t_i)}z\|_{(L^2(\Omega))^m}  +\delta\left(\sum_{i=1}^k\|e^{A^*t_{(k-i)\gamma\hbar}}\|_{{\mathcal L}((L^2(\Omega))^n;(L^2(\Omega))^n)}\right)\|z\|_{(L^2(\Omega))^n}.
\end{eqnarray*}
Hence, the result follows from the latter inequality immediately.
\end{proof}

At the end of this section, we introduce the following notations and result.
Denote

\begin{equation}\label{Theorem-18}
    \mathcal{F}(k):=\Big\{\eta\in\mathbb{R}^n: \eta=\sum_{j=1}^k e^{(\lambda_1 I_n-P)t_j}
    Q_{\nu(j)}\xi_j,\;(\xi_j)_{j\in\mathbb{N}^+}\in {\mathcal{V}}\Big\}\;\mbox{for each}\,\,k\in \mathbb N^+£¬
\end{equation}
and
\begin{equation}\label{Theorem-21}
    \mathcal{F}:=\bigcup_{k\in\mathbb{N}^+}\mathcal{F}(k),
\end{equation}
where
\begin{equation}\label{yu-5-3-1}
   {\mathcal{V}}:=\{(\xi_j)_{j\in \mathbb{N}^+}\in l^\infty(\mathbb{N}^+;\mathbb{R}^m):
    \|\xi_j\|_{\mathbb{R}^m}\leq 1\;\;\mbox{for all}\;\;j\in\mathbb{N}^+\}.
\end{equation}
Then we have
\begin{proposition}\label{PRO-1}
Let $\Lambda_{\hbar}\in\mathcal{M}_{\hbar}$ be fixed.
   Assume that there is a $k^*\in \mathbb{N}^+$ so that
   \begin{equation}\label{20-2-122}
   \mbox{rank}\;\left( e^{-P t_1}Q_{\nu(1)},
  e^{-Pt_2}Q_{\nu(2)},\cdots,e^{-P t_{k^*}}Q_{\nu(k^*)}\right)=n.
  \end{equation}
If $\sigma(P)\subset \{\rho\in\mathbb{C}:\mbox{Re}(\rho) \leq \lambda_1\}$, then
 \begin{equation}\label{Theorem-26}
    \mathcal{F}=\mathbb{R}^n.
\end{equation}
\end{proposition}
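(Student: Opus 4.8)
The plan is to regard $\mathcal{F}$ as the infinite-horizon reachable set of a discrete linear system with unit-ball constrained inputs and to rule out a separating hyperplane via a support-function computation. Put $R_j:=e^{(\lambda_1 I_n-P)t_j}Q_{\nu(j)}=e^{\lambda_1 t_j}e^{-Pt_j}Q_{\nu(j)}$, so that, by \eqref{Theorem-18}--\eqref{yu-5-3-1}, $\mathcal{F}(k)=\{\sum_{j=1}^{k}R_j\xi_j:\ \|\xi_j\|_{\mathbb R^m}\le 1\}$ and $\mathcal{F}=\bigcup_{k\in\mathbb N^+}\mathcal{F}(k)$. Each $\mathcal{F}(k)$ is compact, convex and symmetric, and $\mathcal{F}(k)\subseteq\mathcal{F}(k+1)$ since $0$ lies in each summand; hence $\mathcal{F}$ is convex and symmetric. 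As the scalars $e^{\lambda_1 t_j}$ are positive, hypothesis \eqref{20-2-122} gives $\operatorname{rank}(R_1,\dots,R_{k^*})=n$, so $\mathcal{F}(k^*)$, and a fortiori $\mathcal{F}$, contains a ball around the origin and $0\in\operatorname{int}\mathcal{F}$. For a convex set with the origin in its interior, $\mathcal{F}=\mathbb R^n$ is equivalent to $\overline{\mathcal{F}}$ being contained in no closed half-space; and since the support function of $\mathcal{F}(k)$ is $\sup_{\eta\in\mathcal{F}(k)}\langle v,\eta\rangle_{\mathbb R^n}=\sum_{j=1}^{k}\|R_j^\top v\|_{\mathbb R^m}$, this reduces the proposition to the claim
\[
\sum_{j=1}^{\infty}\|R_j^\top v\|_{\mathbb R^m}=+\infty\qquad\text{for every }v\in\mathbb R^n\setminus\{0\}.
\]

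Next I would exploit the periodic structure. Fix $\gamma\in\mathbb N^+$ with $\gamma\hbar\ge k^*$; then $\operatorname{rank}(R_1,\dots,R_{\gamma\hbar})=n$ as well. From $t_{j+k\hbar}=t_j+kt_\hbar$ and $\nu(j+k\hbar)=\nu(j)$ (see \eqref{Intro-2} and \eqref{Intro-4}) one checks $R_{j+p\gamma\hbar}=N^{p}R_j$ for all $j,p$, where $N:=e^{(\lambda_1 I_n-P)t_{\gamma\hbar}}$. Grouping the series into consecutive blocks of length $\gamma\hbar$ yields
\[
\sum_{j=1}^{\infty}\|R_j^\top v\|_{\mathbb R^m}=\sum_{p=0}^{\infty}g\bigl((N^\top)^{p}v\bigr),\qquad g(w):=\sum_{j=1}^{\gamma\hbar}\|R_j^\top w\|_{\mathbb R^m}.
\]
The functional $g$ is a seminorm, and it vanishes only at $w=0$ because $\operatorname{rank}(R_1,\dots,R_{\gamma\hbar})=n$ (equivalently, by Lemma~\ref{Preli-11}); hence $g$ is a norm on $\mathbb R^n$ and $g\ge c_0\|\cdot\|_{\mathbb R^n}$ for some $c_0>0$. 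It therefore suffices to prove $\sum_{p=0}^{\infty}\|(N^\top)^{p}v\|_{\mathbb R^n}=+\infty$ for every $v\ne 0$.

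The remaining, purely linear-algebraic step is the core of the argument. Since $\sigma(P)\subset\{\operatorname{Re}\le\lambda_1\}$, every eigenvalue of $\lambda_1 I_n-P$ has nonnegative real part, so every eigenvalue of $N$, hence of $N^\top$, has modulus $\ge 1$. I claim: if $L\in\mathbb R^{n\times n}$ has all eigenvalues of modulus $\ge 1$ and $w\ne 0$, then $\sum_{p\ge 0}\|L^{p}w\|=+\infty$; applying this with $L=N^\top$ finishes the proof. To establish the claim, suppose $\sum_{p}\|L^{p}w\|<\infty$, so $L^{p}w\to 0$. By the Cayley--Hamilton theorem, each of the $n$ coordinate sequences of $(L^{p}w)_{p\ge 0}$ satisfies a fixed linear recurrence whose characteristic polynomial is that of $L$, all of whose roots have modulus $\ge 1$. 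A solution of such a recurrence that tends to $0$ must be identically $0$: writing it as $\sum_i q_i(p)\mu_i^{p}$ in canonical form and extracting the terms of maximal order $p^{D}\rho^{p}$ (where $\rho=\max_i|\mu_i|\ge 1$ and $D$ is the largest degree of a $q_i$ with $|\mu_i|=\rho$), one reduces to $\sum_i a_i\omega_i^{p}\to 0$ with $a_i\ne 0$ and the $\omega_i$ distinct on the unit circle, which contradicts $\frac1P\sum_{p=1}^{P}\bigl|\sum_i a_i\omega_i^{p}\bigr|^{2}\to\sum_i|a_i|^{2}>0$ (orthogonality of the characters $p\mapsto\omega_i^{p}$). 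Hence all coordinate sequences vanish, so $w=L^{0}w=0$, a contradiction.

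The main obstacle is precisely this last step in the borderline case where $\sigma(P)$ meets the line $\operatorname{Re}=\lambda_1$: then $N^\top$ may carry eigenvalues of modulus $1$ in nontrivial Jordan blocks, so the crude estimate $\|L^{p}w\|\ge\|w\|/\|L^{-p}\|$ (with $\|L^{-p}\|=O(p^{n-1})$) only gives $\sum_p\|L^{p}w\|\ge c\sum_p p^{-(n-1)}$, which fails to diverge once $n\ge 3$, and the averaging argument above is genuinely needed. (If instead $\sigma(P)\subset\{\operatorname{Re}<\lambda_1\}$, then $N^\top$ is a strict expansion and $\sum_p\|(N^\top)^{p}v\|=+\infty$ is immediate, but that case alone does not suffice.) Everything else --- convexity and the interior point of $\mathcal{F}$, the support-function identity and the separation step, the block regrouping, and the norm equivalence for $g$ --- is routine bookkeeping.
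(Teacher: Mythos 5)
Your argument is correct and is structurally parallel to the paper's proof of Proposition~\ref{PRO-1}: both establish that $\mathcal F$ is convex with $0$ in its interior via the rank condition and Lemma~\ref{Preli-11}, both rule out a separating hyperplane by exploiting the periodicity $t_{j+k\hbar}=t_j+kt_\hbar$ and $\nu(j+k\hbar)=\nu(j)$, and both ultimately rest on the fact that an orbit driven by a matrix whose spectrum satisfies the given condition cannot converge to $0$ unless the initial vector vanishes. The differences are in execution, and are worth recording. You phrase the separation step through the support-function identity $\sup_{\eta\in\mathcal F(k)}\langle v,\eta\rangle_{\mathbb R^n}=\sum_{j\le k}\|R_j^\top v\|_{\mathbb R^m}$; the paper instead builds explicit elements $f_\ell\in\mathcal F$ in \eqref{Theorem-30}--\eqref{Theorem-32-1}, which are precisely block-wise maximizers of $\langle\cdot,\phi\rangle$ --- the same computation read backwards. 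More substantively, you stay in discrete time throughout and prove the required spectral fact about $N^\top=e^{(\lambda_1 I_n-P^\top)t_{\gamma\hbar}}$ in full, via Cayley--Hamilton and Ces\`aro averaging of characters on the unit circle; the paper instead interpolates the subsequence $t_{\ell\widehat k}$ to the continuous orbit $e^{(\lambda_1 I_n-P^\top)t}\phi$ (its \eqref{Theorem-34}--\eqref{Theorem-36}) and then asserts without proof that such an orbit cannot tend to $0$ when $\mbox{Re}\,\sigma(\lambda_1 I_n-P^\top)\ge 0$. Your treatment is therefore more self-contained at the one genuinely delicate point. A somewhat shorter route for that last step, applicable in either the discrete or continuous formulation, is to project onto the generalized eigenspaces of $N^\top$ over $\mathbb C^n$: on the block for an eigenvalue $\mu$ with $|\mu|\ge 1$, the iterate is $\mu^p$ times a $\mathbb C^n$-valued polynomial in $p$ whose norm is eventually bounded away from $0$, so decay forces the corresponding component of $w$ to vanish; this sidesteps the averaging argument entirely.
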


\begin{proof}
The  proof will be organized by three steps.
\vskip 5pt

\emph{Step 1. We show that
there is a constant $C(k^*)>0$ so that
\begin{equation}\label{Theorem-23}
    \|\xi\|_{\mathbb{R}^n}^2\leq C(k^*)\sum_{j=1}^{k^*}\|Q^\top_{\nu(j)}e^{(\lambda_1 I_n-P^\top)t_j}
    \xi\|^2_{\mathbb{R}^m}
    \;\;\mbox{for all}\;\;\xi\in\mathbb{R}^n.
\end{equation}}
\par
Indeed, since
\begin{equation*}
e^{(\lambda_1I_n-P)t_j}Q_{\nu(j)}=e^{\lambda_1 t_j} e^{-Pt_j}Q_{\nu(j)} \textrm{ for each  } 1\leq j\leq k^*,
\end{equation*}
we have that
\begin{equation}\label{20-2-25}
\textrm{span}\{(e^{\lambda_1I_n-P)t_1}Q_{\nu(1)},\cdots, e^{(\lambda_1I_n-P)t_{k^*}}Q_{\nu(k^*)}\}
=\textrm{span}\{e^{-Pt_1}Q_{\nu(1)},\cdots, e^{-Pt_{k^*}}Q_{\nu(k^*)}\}.
\end{equation}
Here and throughout this paper,  span$\{Q\}$ denotes the linear space generated by the columns of the matrix $Q$.
It follows from \eqref{20-2-25} and \eqref{20-2-122}  that
 \begin{equation*}
   \mbox{rank}\;\left( e^{(\lambda_1I_n-P) t_1}Q_{\nu(1)},
  e^{(\lambda_1I_n-P) t_2}Q_{\nu(2)},\cdots,e^{(\lambda_1I_n-P)  t_{k^*}}Q_{\nu(k^*)}\right)=n.
  \end{equation*}
This, along with Lemma \ref{Preli-11} (where  $k$,  $\{\tau_j\}_{j=1}^k$, $\widetilde P$ and  $\{\widetilde Q_j\}_{j=1}^k$
are replaced by $k^*$, $\{t_j\}_{j=1}^{k^*}$, $-\lambda_1I_n+P$ and  $\{Q_{\nu(j)}\}_{j=1}^{k^*}$, respectively),
 yields  \eqref{Theorem-23}.

\vskip 5pt

\emph{Step 2. We claim that there is a constant $\delta>0$ so that
\begin{equation}\label{Theorem-22}
    B^n_{\delta}(0)\subset\mathcal{F}(k^*),
\end{equation}
where $B_\delta^n(0):=\{\eta\in\mathbb{R}^n: \|\eta\|_{\mathbb{R}^n}\leq \delta\}$.}

According to  (\ref{Theorem-23}), the matrix
\begin{equation*}
    M:=\sum_{j=1}^{k^*}e^{(\lambda_1I_n-P)t_j} Q_{\nu(j)}Q_{\nu(j)}^\top e^{(\lambda_1 I_n-P^\top)t_j}
\end{equation*}
is positive definite. Let
\begin{equation}\label{Theorem-24}
    \delta:=\|(e^{(\lambda_1 I_n-P)t_1}Q_{\nu(1)},
    \cdots,e^{(\lambda_1I_n-P)t_{k^*}}
    Q_{\nu(k^*)})\|^{-1}_{\mathbb{R}^{n\times m k^*}}\|M^{-1}\|^{-1}_{\mathbb{R}^{n\times n}}.
\end{equation}
Now, for each $\eta\in B_{\delta}^n(0)$, we set
\begin{equation}\label{Theorem-25}
\zeta_j:=Q_{\nu(j)}^\top e^{(\lambda_1 I_n-P^\top)t_j}M^{-1}\eta\;\;\mbox{for each}\;\;1\leq j\leq k^*
\end{equation}
and
\begin{equation*}
\zeta:=(\zeta_1,\zeta_2,\cdots,\zeta_{k^*},0,0,\cdots).
\end{equation*}
Since $\|\eta\|_{\mathbb{R}^n}\leq \delta$, by (\ref{Theorem-24}), we have that
$\|\zeta_j\|_{\mathbb{R}^m}\leq 1$ for each $1\leq j\leq k^*$. This yields that
$\zeta\in {\mathcal{V}}$.  Moreover, by (\ref{Theorem-25}), we get that
\begin{equation*}
    \sum_{j=1}^{k^*}e^{(\lambda_1 I_n-P)t_j}Q_{\nu(j)}\zeta_j
    =\sum_{j=1}^{k^*}e^{(\lambda_1 I_n-P)t_j}Q_{\nu(j)}Q^\top_{\nu(j)}e^{(\lambda_1 I_n-P^\top)t_j}
    M^{-1}\eta=\eta.
\end{equation*}
Hence, $\eta\in\mathcal{F}(k^*)$ and (\ref{Theorem-22}) follows.

\vskip 5pt

\emph{Step 3. We prove \eqref{Theorem-26}.}

By contradiction, there would exist $\xi\in \mathbb{R}^n\setminus {\mathcal F}$.
According to (\ref{Theorem-18}),
\begin{equation}\label{Theorem-26-1}
{\mathcal F}(k)\; (\forall\; k\in \mathbb{N}^+)\;\;\mbox{is convex and}\;\;
{\mathcal F}(k_1)\subset {\mathcal F}(k_2)\;\;\mbox{if}\;\;k_1\leq k_2.
\end{equation}
From the latter, (\ref{Theorem-21}) and (\ref{Theorem-22}), it follows that ${\mathcal F}$
is convex and has a nonempty interior. By the Hahn-Banach theorem,
there is a $\phi\in \mathbb{R}^n\setminus \{0\}$ so that
\begin{equation}\label{Theorem-27}
    \langle f,\phi\rangle_{\mathbb{R}^n}\leq \langle \xi,\phi\rangle_{\mathbb{R}^n}\;\;\mbox{for all}\;\;f\in\mathcal{F}.
\end{equation}
In what follows, we will construct a sequence $\{f_\ell\}_{\ell\in\mathbb{N}^+}\subset\mathcal{F}$ so that
\begin{equation}\label{Theorem-28}
    \langle f_\ell,\phi\rangle_{\mathbb{R}^n}\to +\infty\;\;\mbox{as}\;\;\ell\rightarrow +\infty.
\end{equation}
When (\ref{Theorem-28}) is proved, by (\ref{Theorem-27}), we will arrive at a contradiction.
Then (\ref{Theorem-26}) follows.

For this purpose, firstly, we choose a $\gamma\in \mathbb{N}^+$ so that
$\widehat{k}:=\gamma \hbar\geq k^*$. By (\ref{Theorem-22}) and the second claim in (\ref{Theorem-26-1}),
we have that
\begin{equation}\label{Theorem-29}
B_\delta^n(0)\subset {\mathcal F}(k)\;\;\mbox{for any}\;\;k\geq \widehat k.
\end{equation}
Here $\delta>0$ is defined by (\ref{Theorem-24}). Let
\begin{equation}\label{Theorem-30}
z_i:=\delta\frac{e^{(\lambda_1 I_n-P^\top)t_{i\widehat{k}}}\phi}
{\|e^{(\lambda_1 I_n-P^\top)t_{i\widehat{k}}}\phi\|_{\mathbb{R}^n}}
\;\;\mbox{for each}\;\;i\in \mathbb{N}.
\end{equation}
This, along with (\ref{Theorem-29}), implies that
\begin{equation*}
z_i\in B_\delta^n(0)\subset {\mathcal F}(\widehat{k})\;\;
\mbox{for each}\;\;i\in \mathbb{N}.
\end{equation*}
It follows from the latter and (\ref{Theorem-18}) that for each $i\in \mathbb{N}$,
there is a sequence $(\xi_j^{(i)})_{j\in\mathbb{N}^+}\in {\mathcal{V}}$ so that
\begin{equation}\label{Theorem-31}
    z_i=\sum_{j=1}^{\widehat{k}} e^{(\lambda_1 I_n-P)t_j}Q_{\nu(j)}\xi_j^{(i)}.
\end{equation}

Secondly, we set
\begin{equation}\label{}\label{Theorem-32-1}
    f_{\ell}:=z_0+e^{(\lambda_1 I_n-P)t_{\widehat{k}}}z_1+\cdots+
    e^{(\lambda_1 I_n-P)t_{\ell \widehat{k}}}z_{\ell}\;\;\mbox{for each}\;\;\ell\in\mathbb{N}^+.
\end{equation}
Since $\Lambda_\hbar=\{t_j\}_{j\in \mathbb{N}}\in\mathcal M_\hbar$, by (\ref{Theorem-31}), (\ref{Theorem-32-1}),
(\ref{Intro-2}) and (\ref{Intro-4}), we have that
\begin{eqnarray*}
    f_\ell&=&\sum_{j=1}^{\widehat{k}}e^{(\lambda_1 I_n-P)t_j}Q_{\nu(j)}\xi_j^{(0)}
    +e^{(\lambda_1 I_n-P)t_{\widehat{k}}}\sum_{j=1}^{\widehat{k}}e^{(\lambda_1 I_n-P)t_j}Q_{\nu(j)}\xi_j^{(1)}\\
    &&+\cdots+e^{(\lambda_1 I_n-P)t_{\ell\widehat{k}}}\sum_{j=1}^{\widehat{k}}  e^{(\lambda_1 I_n-P)t_j}
    Q_{\nu(j)}\xi_j^{(\ell)}\\
    &=&\sum_{j=1}^{\widehat{k}}e^{(\lambda_1 I_n-P)t_j}Q_{\nu(j)}\xi_j^{(0)}
    +\sum_{j=1}^{\widehat{k}}e^{(\lambda_1 I_n-P)t_{\widehat{k}+j}}Q_{\nu(\widehat{k}+j)}\xi_j^{(1)}
    +\cdots+\sum_{j=1}^{\widehat{k}}e^{(\lambda_1 I_n-P)t_{\ell\widehat{k}+j}}Q_{\nu(\ell\widehat{k}+j)}
    \xi_j^{(\ell)}\\
    &=&\sum_{j=1}^{\widehat{k}}e^{(\lambda_1 I_n-P)t_j}Q_{\nu(j)}\xi_j^{(0)}
    +\sum_{j=\widehat{k}+1}^{2\widehat{k}} e^{(\lambda_1 I_n-P)t_j}Q_{\nu(j)}\xi_{j-\widehat{k}}^{(1)}
    +\cdots+\sum_{j=\ell\widehat{k}+1}^{(\ell+1)\widehat{k}}e^{(\lambda_1 I_n-P)t_j}Q_{\nu(j)}\xi_{j-\ell
    \widehat{k}}^{(\ell)}.
\end{eqnarray*}
This, along with the fact that $(\xi_j^{(i)})_{j\in\mathbb{N}^+}\in{\mathcal{V}}$ (for each
$i\in\mathbb{N}$), (\ref{Theorem-18}) and (\ref{Theorem-21}),  implies that
\begin{equation*}
    f_\ell\in\mathcal{F}((\ell+1)\widehat{k})\subset\mathcal{F}\;\;\mbox{for each}\;\;
    \ell\in\mathbb{N}^+.
\end{equation*}

Finally, we check that $\{f_\ell\}_{\ell\in\mathbb{N}^+}$ defined by (\ref{Theorem-32-1})
satisfies (\ref{Theorem-28}). According to (\ref{Theorem-32-1}) and (\ref{Theorem-30}),
for each $\ell\in\mathbb{N}^+$,
\begin{equation*}
    \langle f_\ell,\phi\rangle_{\mathbb{R}^n}
    =\delta\left(\|\phi\|_{\mathbb{R}^n}+\|e^{(\lambda_1 I_n-P^\top)t_{\widehat{k}}}\phi\|_{\mathbb{R}^n}
    +\cdots+\|e^{(\lambda_1I_n-P^\top)t_{\ell\widehat{k}}}\phi\|_{\mathbb{R}^n}\right).
\end{equation*}
It is clear that $\{\langle f_{\ell},\phi\rangle_{\mathbb{R}^n}\}_{\ell\in \mathbb{N}^+}$
is a strictly monotonically increasing sequence.
We will use a contradiction argument to show (\ref{Theorem-28}). Otherwise,
\begin{equation}\label{Theorem-33}
    \|e^{(\lambda_1 I_n-P^\top)t_{\ell\widehat{k}}}\phi\|_{\mathbb{R}^n}\rightarrow 0\;\;
    \mbox{when}\;\;\ell\rightarrow +\infty.
\end{equation}
On one hand, for any $t\geq \widehat{k}$, there exists a unique $\ell\in\mathbb{N}^+$
so that $t\in [t_{\ell\widehat{k}},t_{(\ell+1)\widehat{k}})$. Then
\begin{equation}\label{Theorem-34}
    \|e^{(\lambda_1 I_n-P^\top) t}\phi\|_{\mathbb{R}^n}
    =\|e^{(\lambda_1 I_n-P^\top)(t-t_{\ell\widehat{k}})}
    e^{(\lambda_1 I_n-P^\top) t_{\ell\widehat{k}}}\phi\|_{\mathbb{R}^n}.
\end{equation}
By (\ref{Intro-2}), we have that
\begin{equation*}
0\leq t-t_{\ell\widehat{k}}\leq t_{(\ell+1)\widehat{k}}-t_{\ell\widehat{k}}
=t_{\widehat{k}}=t_{\gamma\hbar},
\end{equation*}
which, combined with (\ref{Theorem-34}), indicates that
\begin{equation}\label{Theorem-35}
   \|e^{(\lambda_1 I_n-P^\top)t}\phi\|_{\mathbb{R}^n}\leq C(\gamma\hbar)
   \|e^{(\lambda_1 I_n-P^\top)t_{\ell\widehat{k}}}\phi\|_{\mathbb{R}^n}.
\end{equation}
Here $C(\gamma\hbar)>0$ is a constant independent of $\ell$. It follows from (\ref{Theorem-33})
and (\ref{Theorem-35}) that
\begin{equation}\label{Theorem-36}
\|e^{(\lambda_1 I_n-P^\top)t}\phi\|_{\mathbb{R}^n}\rightarrow 0\;\;\mbox{when}\;\;t\rightarrow +\infty.
\end{equation}
On the other hand, we denote $\sigma(\lambda_1 I_n-P^\top)=\{\mu_k\}_{k=1}^s$ ($s\in\mathbb{N}^+$).
Since $\sigma(P)\subset \{\rho\in\mathbb{C}:\mbox{Re}(\rho)\leq \lambda_1\}$,
we have that
\begin{equation*}
    \mbox{Re}(\mu_k)\geq 0\;\;\mbox{for each}\;\;1\leq k\leq s.
\end{equation*}
which, combined with (\ref{Theorem-36}), indicates that $\phi=0$. It contradicts to the fact
that $\phi\not=0$. Thus (\ref{Theorem-28}) follows.
\par
In summary, we finish the proof of Proposition \ref{PRO-1}.
\end{proof}

\section{Proof  of Theorem~\ref{Intro-6}}

The proof of Theorem~\ref{Intro-6} will be organized by  two subsections.

\subsection{Proof of $(i)$}

We divide its proof into the following four steps.
\vskip 5pt

\emph{Step 1. We decompose the state space $(L^2(\Omega))^n$.}

Recall that $e_1$ is the eigenfunction of $-\triangle$ with respect to $\lambda_1$. Define
\begin{equation}\label{Theorem-1}
    H_{1}:=\mathbb{R}^n e_1=\{v e_1: v\in \mathbb{R}^n\},
\end{equation}
the projection operator
\begin{equation}\label{Theorem-2}
\mathcal{P}: (L^2(\Omega))^n\rightarrow H_1
\end{equation}
and
\begin{equation}\label{Theorem-3}
H_1^\bot:=(I-\mathcal{P})(L^2(\Omega))^n.
\end{equation}
By (\ref{Theorem-1})-(\ref{Theorem-3}), we have that
\begin{equation}\label{Theorem-4}
H_1\;\;\mbox{is a finite dimensional space and}\;\;
(L^2(\Omega))^n=H_1\oplus H_1^\bot.
\end{equation}

\vskip 5pt

\emph{Step 2. We show that for each $f\in H_1$, there is a $k(f)\in \mathbb{N}^+$ and
a sequence $(u_j)_{j\in \mathbb{N}^+}:=(\xi_j e_1)_{j\in \mathbb{N}^+}\in \mathcal{U}$
 so that
\begin{equation}\label{Theorem-14}
e^{At_{k(f)}}f+\sum_{j=1}^{k(f)}e^{A(t_{k(f)}-t_j)} B_{\nu(j)} u_j=0.
\end{equation}}


 Indeed, by (\ref{Theorem-1}) and (\ref{Theorem-26}), we obtain that
\begin{equation*}
    \mathcal{F}e_1=H_1.
\end{equation*}
This, along with (\ref{Theorem-21}) and (\ref{Theorem-18}), implies that
for each $f\in H_1$, there is a $k(f)\in \mathbb{N}^+$ and a sequence
$(\xi_j)_{j\in\mathbb{N}^+}\in \mathcal{V}$ (Recall (\ref{yu-5-3-1}) for the definition of $\mathcal{V}$) so that
\begin{equation*}
f+\sum_{j=1}^{k(f)}e^{(\lambda_1 I_n-P)t_j}Q_{\nu(j)}\xi_j e_1=0,
\end{equation*}
which indicates that
\begin{equation}\label{Theorem-37}
0=f+\sum_{j=1}^{k(f)}e^{-At_j}|_{H_1}Q_{\nu(j)}\xi_j e_1.
\end{equation}
Let $(u_j)_{j\in \mathbb{N}^+}:=(\xi_j e_1)_{j\in \mathbb{N}^+}$. It is clear that
$(u_j)_{j\in \mathbb{N}^+}\in \mathcal{U}$ and it follows from (\ref{Theorem-37}) that
\begin{equation*}
0=e^{A t_{k(f)}}f+\sum_{j=1}^{k(f)} e^{A(t_{k(f)}-t_j)}Q_{\nu(j)}u_j=
e^{A t_{k(f)}}f+\sum_{j=1}^{k(f)} e^{A(t_{k(f)}-t_j)}B_{\nu(j)}u_j.
\end{equation*}
Hence, (\ref{Theorem-14}) follows.

\vskip 5pt

\emph{Step 3.
We show that there is a constant $C>0$ so that
\begin{equation}\label{Theorem-9}
\|e^{At}g\|_{(L^2(\Omega))^n}\leq Ce^{-(\lambda_2-\lambda_1)t/2}\|g\|_{(L^2(\Omega))^n}
\;\;\mbox{for all}\;\;g\in H_1^\bot.
\end{equation}}
\par
To achieve this goal,  on one hand, since $\sigma(P)\subset \{\rho\in \mathbb{C}: \mbox{Re}\rho\leq \lambda_1\}$
and $0<\lambda_1<\lambda_2$, there is a constant $C>0$ so that
\begin{equation}\label{Theorem-7}
\|e^{Pt}\eta\|_{\mathbb{R}^n}\leq C e^{(\lambda_1+\lambda_2)t/2}\|\eta\|_{\mathbb{R}^n}
\;\;\mbox{for all}\;\;\eta\in \mathbb{R}^n\;\;\mbox{and}\;\;t\in \mathbb{R}^+.
\end{equation}
On the other hand, for each $g\in H_1^\bot$, there exists a sequence $\{g_i\}_{i\geq 2}\subset \mathbb{R}^n$
so that
\begin{equation}\label{Theorem-8}
g=\sum_{i=2}^{+\infty} g_i e_i\;\;\mbox{and}\;\;
\|g\|_{(L^2(\Omega))^n}^2=\sum_{i=2}^{+\infty} \|g_i\|_{\mathbb{R}^n}^2.
\end{equation}
Thus
\begin{equation*}
e^{At} g=e^{Pt}\sum_{i=2}^{+\infty} e^{-\lambda_i t}g_i e_i
=\sum_{i=2}^{+\infty} e^{-\lambda_i t}e_i e^{Pt}g_i,
\end{equation*}
which indicates that
\begin{equation*}
\|e^{At}g\|_{(L^2(\Omega))^n}^2
=\sum_{i=2}^{+\infty} e^{-2\lambda_i t}\|e^{Pt}g_i\|_{\mathbb{R}^n}^2.
\end{equation*}
It follows from the latter, (\ref{Theorem-7}) and (\ref{Theorem-8}) that
\begin{equation*}
\|e^{At}g\|_{(L^2(\Omega))^n}^2\leq C\sum_{i=2}^{+\infty}
e^{-2\lambda_i t}e^{(\lambda_2+\lambda_1)t}\|g_i\|_{\mathbb{R}^n}^2
\leq C e^{-(\lambda_2-\lambda_1)t}\|g\|^2_{(L^2(\Omega))^n}.
\end{equation*}
Hence, (\ref{Theorem-9}) follows immediately.

\vskip 5pt

\emph{Step 4. We show that for each $\varepsilon>0$ and $x_0\in (L^2(\Omega))^n\setminus \{0\}$,
there is a $k\in\mathbb{N}^+$ and a control sequence $(u_j)_{j\in\mathbb{N}^+}\in\mathcal{U}$ so that
\begin{equation}\label{Theorem-38}
    e^{A t_k}x_0+\sum_{j=1}^k e^{A(t_k-t_j)}B_{\nu(j)}u_j\in B_{\varepsilon}(0).
\end{equation}}

Indeed, according to (\ref{Theorem-4}), there are
$x_{0,1}\in H_1$ and $x_{0,2}\in\ H_1^\bot$ so that
\begin{equation}\label{Theorem-39}
    x_0=x_{0,1}+x_{0,2}.
\end{equation}
On one hand, by (\ref{Theorem-14}), there is a $k_0\in \mathbb{N}^+$ and
a sequence $(v_j)_{j\in\mathbb{N}^+}\in \mathcal{U}$ so that
\begin{equation}\label{Theorem-40}
    e^{At_{k_0}}x_{0,1}+\sum_{j=1}^{k_0}e^{A(t_{k_0}-t_j)}B_{\nu(j)}v_j=0.
\end{equation}
On the other hand, by (\ref{Theorem-9}), there is a $k\geq k_0$ so that
\begin{equation}\label{Theorem-41}
    e^{A t_k}x_{0,2}\in B_\varepsilon(0).
\end{equation}
Define
\begin{equation}\label{Theorem-42}
    u_j:=
\begin{cases}
    v_j&\mbox{if}\;\;1\leq j\leq k_0,\\
    0&\mbox{if}\;\;j\geq k_0+1.
\end{cases}
\end{equation}
Then $(u_j)_{j\in \mathbb{N}^+}\in\mathcal{U}$ and it follows from
(\ref{Theorem-39})-(\ref{Theorem-42}) that
\begin{eqnarray*}
    e^{A t_k}x_0+\sum_{j=1}^k e^{A(t_k-t_j)}B_{\nu(j)}u_j
    &=&e^{A(t_k-t_{k_0})}\Big(e^{A t_{k_0}}x_{0,1}
    +\sum_{j=1}^{k_0}e^{A(t_{k_0}-t_j)}B_{\nu(j)}v_j\Big)
    +e^{A t_k}x_{0,2}\\
    &=&e^{A t_k}x_{0,2}\in B_\varepsilon(0).
\end{eqnarray*}
Hence, (\ref{Theorem-38}) holds.
\par
In summary, we finish the proof of $(i)$.

\subsection{Proof of $(ii)$}
We divide its proof into the following three steps.

\vskip 5pt

\emph{Step 1. We show that
  \begin{equation}\label{Theo-1}
    \|e^{At}\|_{{\mathcal L}((L^2(\Omega))^n;(L^2(\Omega))^n)}\leq 1
    \;\;\mbox{for all}\;\;t\in\mathbb{R}^+.
\end{equation}}

For this purpose, on one hand, since
\begin{equation}\label{Theo-2}
\langle P\eta,\eta\rangle_{\mathbb{R}^n}\leq \lambda_1\|\eta\|_{\mathbb{R}^n}^2
\;\;\mbox{for each}\;\;\eta\in \mathbb{R}^n,
\end{equation}
we have that
\begin{equation}\label{Theo-3}
\begin{array}{lll}
    \langle Ax,x\rangle_{(L^2(\Omega))^n}&=&\langle (\triangle_n+P)x,x\rangle_{(L^2(\Omega))^n}\\
    &\leq&-\lambda_1\|x\|_{(L^2(\Omega))^n}^2+\langle Px,x\rangle_{(L^2(\Omega))^n}\leq 0
    \;\;\mbox{for all}\;\;x\in D(A).
\end{array}
\end{equation}
On the other hand, we fix any positive constant
$\rho_0>\lambda_1$. For any $x=(x_1,x_2,\cdots,x_n)^\top\in (H_0^1(\Omega))^n
$ and $y=(y_1,y_2,\cdots,y_n)^\top\in (H_0^1(\Omega))^n$, we define
\begin{equation*}
    \pi(x,y):=\rho_0\langle x,y\rangle_{(L^2(\Omega))^n}
    -\langle Px,y\rangle_{(L^2(\Omega))^n}
    +\sum_{i=1}^n\langle \nabla x_i,\nabla y_i\rangle_{(L^2(\Omega))^n}.
\end{equation*}
It is clear that there is a constant $C>0$ so that
\begin{equation}\label{Theo-4}
    |\pi(x,y)|\leq C\|x\|_{(H_0^1(\Omega))^n}\|y\|_{(H_0^1(\Omega))^n}
    \;\;\mbox{for all}\;\;x, y\in (H_0^1(\Omega))^n.
\end{equation}
Moreover, it follows from (\ref{Theo-2}) that
\begin{equation}\label{Theo-5}
    \pi(x,x)\geq\sum_{i=1}^n\langle \nabla x_i,\nabla x_i\rangle_{(L^2(\Omega))^n}
    =\|x\|_{(H_0^1(\Omega))^n}^2\;\;\mbox{for all}\;\;x\in (H_0^1(\Omega))^n.
\end{equation}
According to (\ref{Theo-4}), (\ref{Theo-5}) and the Lax-Milgram theorem,
for each $f\in (H^{-1}(\Omega))^n$, there is a unique $x_f\in (H_0^1(\Omega))^n$ so that
\begin{equation*}
    \pi(x_f,y)=\langle f,y\rangle_{(H^{-1}(\Omega))^n,(H_0^1(\Omega))^n}
    \;\;\mbox{for all}\;\;y\in (H_0^1(\Omega))^n,
\end{equation*}
i.e.,
\begin{equation}\label{Theo-6}
    (\rho_0 I-A)x_f=f\;\;\mbox{in}\;\;(H^{-1}(\Omega))^n,
\end{equation}
where $I$ is the identity operator in ${\mathcal L}((L^2(\Omega))^n; (L^2(\Omega))^n)$.
In particular, if $f\in (L^2(\Omega))^n$, by (\ref{Theo-6}) and the definition of $D(A)$,
we have that $x_f\in D(A)$. Hence,
\begin{equation}\label{Theo-7}
\mbox{R}(\rho_0 I-A)=(L^2(\Omega))^n.
\end{equation}
Here $\mbox{R}(\rho_0 I-A)$ denotes the range of $\rho_0 I-A$.
It follows from (\ref{Theo-3}), (\ref{Theo-7}) and the Lumer-Phillips theorem (see \cite{Pazy})
that (\ref{Theo-1}) holds.

\vskip 5pt

\emph{Step 2. We claim that for any $\delta\in (0,1)$ and $k\in \mathbb{N}^+$,
$[A^*,{\mathcal B}_\hbar^*,\Lambda_\hbar]$ is $\delta$-approximate observable
in $[0,t_{2kk^*\hbar}]$ and $\displaystyle{\lim_{k\rightarrow +\infty}} D_{op}(2kk^*\hbar,\delta)=0$.}

Since rank$(e^{-P t_1}Q_{\nu(1)}, e^{-P t_2}Q_{\nu(2)}, \cdots, e^{-P t_{k^*}}Q_{\nu(k^*)})=n$,
we have that
\begin{equation}\label{Theo-8}
\mbox{rank}(e^{-P t_1}Q_{\nu(1)}, e^{-P t_2}Q_{\nu(2)}, \cdots,
e^{-P t_{2k^*\hbar-1}}Q_{\nu(2k^*\hbar-1)})=n.
\end{equation}
According to (\ref{Theo-8}) and Proposition~\ref{Preli-2}, there are two constants
$\theta\in (0,1)$  and $C>0$ so that
for all $z\in (L^2(\Omega))^n$,
\begin{equation*}
\|e^{A^*t_{2k^*\hbar}}z\|_{(L^2(\Omega))^n}\leq C\left(\sum_{j=1}^{2k^*\hbar}
    \|B^*_{\nu(j)}e^{A^*(t_{2k^*\hbar}-t_j)}z\|_{(L^2(\Omega))^m}\right)^{1-\theta}
    \|z\|^\theta_{(L^2(\Omega))^n}.
\end{equation*}
Then for any $\delta\in (0,1)$, there exists a constant $C(\delta)>0$ so that
for all  $z\in (L^2(\Omega))^n$,
\begin{equation*}
 \|e^{A^*t_{2k^*\hbar}}z\|_{(L^2(\Omega))^n}\leq C(\delta)\displaystyle{\sum_{j=1}^{2k^*\hbar}}
    \|B^*_{\nu(j)}e^{A^*(t_{2k^*\hbar}-t_j)}z\|_{(L^2(\Omega))^m}+
    \delta\|z\|_{(L^2(\Omega))^n},
\end{equation*}
which indicates that
\begin{equation*}
[A^*,{\mathcal B}_\hbar^*,\Lambda_\hbar]\;\;
\mbox{is}\;\;\delta\mbox{-approximate observable in}\;\;[0,t_{2k^*\hbar}].
\end{equation*}
This, along with Proposition~\ref{Preli-5}, implies that for each $k\in \mathbb{N}^+$,
\begin{equation}\label{Theo-9}
[A^*,{\mathcal B}_\hbar^*,\Lambda_\hbar]\;\;
\mbox{is}\;\;\delta_k\mbox{-approximate observable in}\;\;
[0,t_{2kk^*\hbar}]
\end{equation}
with observability constant $D(2kk^*\hbar,\delta_k)$. Here,
$\delta_k$ and $D(2kk^*\hbar,\delta_k)$ are defined as (\ref{Preli-5:1})
and (\ref{Preli-5:2}) (where $\gamma=2k^*$), respectively.

Furthermore, by (\ref{Theo-1}), we get that
\begin{equation}\label{Theo-10}
\sum_{i=0}^{k-1}\|e^{A^*t_{2ik^*\hbar}}\|_{{\mathcal L}((L^2(\Omega))^n;(L^2(\Omega))^n)}\leq
\sum_{i=0}^{k-1}\|e^{A^*t_{2ik^*\hbar}}\|^{-1}_{{\mathcal L}((L^2(\Omega))^n;(L^2(\Omega))^n)}\;\;
\mbox{for each}\;\;k\in \mathbb{N}^+,
\end{equation}
and
\begin{equation}\label{Theo-11}
\sum_{i=0}^{+\infty}\|e^{A^*t_{2ik^*\hbar}}\|^{-1}_{{\mathcal L}((L^2(\Omega))^n;(L^2(\Omega))^n)}
=+\infty.
\end{equation}
It follows from (\ref{Preli-5:1}), (\ref{Theo-10}) and (\ref{Theo-9}) that for each $k\in \mathbb{N}^+$,
$\delta_k\leq \delta$, $[A^*,{\mathcal B}_\hbar^*,\Lambda_\hbar]$ is $\delta$-approximate observable in
$[0,t_{2kk^*\hbar}]$ and
\begin{equation*}
D_{op}(2kk^*\hbar,\delta)\leq  D(2kk^*\hbar,\delta_k).
\end{equation*}
This, along with (\ref{Preli-5:2}) and (\ref{Theo-11}), implies that
$\displaystyle{\lim_{k\rightarrow +\infty}} D_{op}(2kk^*\hbar,\delta)=0$.

\vskip 5pt

\emph{Step 3. We show that for any $\varepsilon>0$ and $x_0\in (L^2(\Omega))^n$, there
exists a control $u\in {\mathcal U}$ and $k_0\in \mathbb{N}$ so that
\begin{equation}\label{Theo-12}
x(t_{k_0};x_0,u,\Lambda_\hbar)\in B_\varepsilon(0).
\end{equation}}

To achieve this goal, for each $k\in \mathbb{N}$, we firstly define the
reachable set of (\ref{Intro-3}) at $t_k$ with initial state $x_0$ as follows:
\begin{equation*}
{\mathcal R}(x_0,k):=\{x(t_k;x_0,v,\Lambda_\hbar): v\in {\mathcal U}\}.
\end{equation*}
It is clear that ${\mathcal R}(x_0,k)$ is convex and closed.
Two cases may occur: $x_0\in B_\varepsilon(0)$ or $x_0\not\in B_\varepsilon(0)$.
\vskip 5pt
\emph{Case 1. $x_0\in B_\varepsilon(0)$.} In this case, (\ref{Theo-12}) follows immediately with $k_0=0$.
\vskip 5pt
\emph{Case 2. $x_0\not\in B_\varepsilon(0)$.} In this case, we will use a contradiction
argument to prove (\ref{Theo-12}). By contradiction, for each $k\in \mathbb{N}$, we
would have that
\begin{equation*}\label{Theo-13}
\rho_k:=d_{(L^2(\Omega))^n}({\mathcal R}(x_0,k),B_\varepsilon(0))
=d_{(L^2(\Omega))^n}(B_\varepsilon(0)-{\mathcal R}(x_0,k),\{0\})>0,
\end{equation*}
where $d_{(L^2(\Omega))^n}(E,F):=\displaystyle{\inf_{x\in E, y\in F}} \|x-y\|_{(L^2(\Omega))^n}$ and
$E-F:=\{x-y: x\in E, y\in F\}$ for all $E, F\subset (L^2(\Omega))^n$.

On one hand, since $B_\varepsilon(0)-{\mathcal R}(x_0,k)$ is a convex set and
$B_{\rho_k/2}(0)\cap [B_\varepsilon(0)-{\mathcal R}(x_0,k)]=\emptyset$, by the Hahn-Banach theorem,
there is a $x_0^*\in (L^2(\Omega))^n\setminus\{0\}$ so that
\begin{equation}\label{Theo-14}
\langle x_0^*,f\rangle_{(L^2(\Omega))^n}\leq
\langle x_0^*,x_1-x(t_k;x_0,v,\Lambda_\hbar)\rangle_{(L^2(\Omega))^n}
\end{equation}
for any $f\in B_{\rho_k/2}(0), v=(v_j)_{j\in \mathbb{N}^+}\in {\mathcal U}$ and
$x_1\in B_\varepsilon(0)$. Note that (\ref{Theo-14}) can be equivalently rewritten
as follows:
\begin{equation*}
\begin{array}{l}
\langle x_0^*,f\rangle_{(L^2(\Omega))^n}-\langle x_0^*,x_1\rangle_{(L^2(\Omega))^n}\\
+\displaystyle{\sum_{j=1}^k}\langle v_j,B_{\nu(j)}^* e^{A^*(t_k-t_j)}x_0^*\rangle_{(L^2(\Omega))^m}
\leq -\langle x_0^*,e^{A t_k}x_0\rangle_{(L^2(\Omega))^n}.
\end{array}
\end{equation*}
From the latter it follows that
\begin{equation}\label{Theo-15}
(\varepsilon+\rho_k/2)\|x_0^*\|_{(L^2(\Omega))^n}
+\sum_{j=1}^k \|B_{\nu(j)}^* e^{A^*(t_k-t_j)}x_0^*\|_{(L^2(\Omega))^m}
\leq \|x_0\|_{(L^2(\Omega))^n} \|e^{A^* t_k}x_0^*\|_{(L^2(\Omega))^n}.
\end{equation}
On the other hand, according to \emph{Step 2}, there is a $\widetilde{k}\in \mathbb{N}^+$ so that
$[A^*,{\mathcal B}_\hbar^*,\Lambda_\hbar]$ is $\varepsilon \|x_0\|^{-1}_{(L^2(\Omega))^n}$-approximate
observable in $[0,t_{2\widetilde{k} k^*\hbar}]$ and
$D_{op}(2\widetilde{k} k^*\hbar,\varepsilon\|x_0\|^{-1}_{(L^2(\Omega))^n})\leq \|x_0\|_{(L^2(\Omega))^n}^{-1}$.
Hence, by Remark~\ref{Preli-4}, we get that
\begin{eqnarray*}
&&\|e^{A^* t_{2\widetilde{k} k^*\hbar}}x_0^*\|_{(L^2(\Omega))^n}\\
&\leq&D_{op}(2\widetilde{k} k^*\hbar,\varepsilon\|x_0\|^{-1}_{(L^2(\Omega))^n})
\sum_{j=1}^{2\widetilde{k} k^*\hbar} \|B_{\nu(j)}^* e^{A^*(t_{2\widetilde{k} k^*\hbar}-t_j)}x_0^*\|_{(L^2(\Omega))^m}
+\varepsilon\|x_0\|^{-1}_{(L^2(\Omega))^n}\|x_0^*\|_{(L^2(\Omega))^n}\\
&\leq&\|x_0\|^{-1}_{(L^2(\Omega))^n}
\Big(\sum_{j=1}^{2\widetilde{k} k^*\hbar} \|B_{\nu(j)}^* e^{A^*(t_{2\widetilde{k} k^*\hbar}-t_j)}x_0^*\|_{(L^2(\Omega))^m}
+\varepsilon\|x_0^*\|_{(L^2(\Omega))^n}\Big).
\end{eqnarray*}
This, along with (\ref{Theo-15}), implies that $\rho_{2\widetilde{k} k^*\hbar}\|x_0^*\|_{(L^2(\Omega))^n}\leq 0$.
It leads to a contradiction. Hence, (\ref{Theo-12}) follows.
\par
In summary, we finish the proof of $(ii)$.

\section{Proof of Theorem~\ref{20-2-28}}

By contradiction, there would exist an $\varepsilon_0>0$ and a $\Lambda_\hbar\in\mathcal M_\hbar$ so that $[A,{\mathcal B}_\hbar,\Lambda_\hbar]$ is
$(\varepsilon_0$-$\text{GCAC})_{\hbar}$. Let
$\rho:=\rho_1+\mathfrak{i}\rho_2\in \sigma(P^\top)$ with $\rho_1>\lambda_1$, where $\mathfrak{i}$ is the unit element of pure imaginary number.
Then there is a $\eta\in \mathbb{C}^n$ with $\|\eta\|_{\mathbb{C}^n}=1$ so that
$P^\top\eta=\rho\eta$. Denote $\widehat{\eta}:=(\eta-\overline{\eta})/(2\mathfrak{i})$,
where $\overline{\eta}$ is the
conjugate vector of $\eta$. Two cases may occur: $\widehat{\eta}=0$ or $\widehat{\eta}\not=0$.
\vskip 5pt
\emph{Case 1. $\widehat{\eta}=0$.}  In this case,  $\eta\in\mathbb R^n$ and $\rho_2=0$. We can easily check that
\begin{equation}\label{theorem-0}
    e^{P^\top t}\eta=e^{\rho t}\eta=e^{\rho_1 t}\eta\;\;\mbox{for all}\;\;t\in\mathbb{R}^+.
\end{equation}
Since $[A,{\mathcal B}_\hbar,\Lambda_\hbar]$ is $(\varepsilon_0$-$\text{GCAC})_{\hbar}$,
for each $\ell\geq 2\varepsilon_0$, there exists a control
$(u_j^{(\ell)})_{j\in\mathbb{N}^+}\in\mathcal{U}, k_\ell\in\mathbb{N}^+$ and
$f_\ell\in B_{\varepsilon_0}(0)$ so that
\begin{equation*}
     e^{A t_{k_\ell}}(\ell\eta e_1)+\sum_{j=1}^{k_\ell}
     e^{A(t_{k_\ell}-t_j)}B_{\nu(j)}u_j^{(\ell)}=f_\ell.
\end{equation*}
This, along with (\ref{Preli-2:3}) and (\ref{theorem-0}), implies that
\begin{eqnarray*}
&&\langle f_\ell,\eta e_1\rangle_{(L^2(\Omega))^n}\\
&=&\ell \langle e^{\triangle t_{k_\ell}}e_1 e^{P^\top t_{k_\ell}}\eta,\eta e_1\rangle_{(L^2(\Omega))^n}
+\sum_{j=1}^{k_\ell}\langle u_j^{(\ell)},\chi_{\omega_{\nu(j)}}
    Q^\top_{\nu(j)} e^{\triangle(t_{k_\ell}-t_j)}e_1 e^{P^\top(t_{k_\ell}-t_j)}\eta\rangle_{(L^2(\Omega))^m}\\
    &=&\ell e^{(\rho_1-\lambda_1)t_{k_\ell}}+\sum_{j=1}^{k_\ell}\langle u_j^{(\ell)},\chi_{\omega_{\nu(j)}}Q^\top_{\nu(j)} e^{(\rho_1-\lambda_1)(t_{k_\ell}-t_j)}e_1\eta\rangle_{(L^2(\Omega))^m}.
\end{eqnarray*}
It follows from the latter that
\begin{equation}\label{theorem-1}
\ell e^{(\rho_1-\lambda_1)t_{k_\ell}}\leq \max_{1\leq j\leq \hbar}
\|Q_j^\top\|_{\mathbb{R}^{m\times n}}\sum_{j=1}^{k_\ell} e^{(\rho_1-\lambda_1)(t_{k_\ell}-t_j)}+\varepsilon_0.
\end{equation}
Noting that
\begin{equation*}
\sum_{j=1}^{k_\ell} e^{(\rho_1-\lambda_1)(t_{k_\ell}-t_j)}
\leq \sum_{j=1}^{k_\ell} e^{(\rho_1-\lambda_1)(t_{k_\ell}-t_j)} (t_j-t_{j-1})
\left[\min_{1\leq j\leq \hbar}(t_j-t_{j-1})\right]^{-1}
\end{equation*}
    and
\begin{eqnarray*}
\sum_{j=1}^{k_\ell} e^{(\rho_1-\lambda_1)(t_{k_\ell}-t_j)} (t_j-t_{j-1})
&\leq&\sum_{j=1}^{k_\ell} \int_{t_{j-1}}^{t_j} e^{(\rho_1-\lambda_1)(t_{k_\ell}-s)}\,\mathrm ds\\
&=&\int_0^{t_{k_\ell}} e^{(\rho_1-\lambda_1)(t_{k_\ell}-s)}\,\mathrm ds\leq
\frac{e^{(\rho_1-\lambda_1)t_{k\ell}}}{\rho_1-\lambda_1},
\end{eqnarray*}
 by (\ref{theorem-1}), we obtain that
\begin{equation}\label{theorem-2}
\ell e^{(\rho_1-\lambda_1)t_{k_\ell}}\leq \max_{1\leq j\leq \hbar}
\|Q_j^\top\|_{\mathbb{R}^{m\times n}}\frac{e^{(\rho_1-\lambda_1)t_{k\ell}}}{\rho_1-\lambda_1}
\left[\min_{1\leq j\leq \hbar}(t_j-t_{j-1})\right]^{-1}+\varepsilon_0.
\end{equation}
 Thus,
\begin{equation*}
\ell\leq \max_{1\leq j\leq \hbar}
\|Q_j^\top\|_{\mathbb{R}^{m\times n}}(\rho_1-\lambda_1)^{-1}
\left[\min_{1\leq j\leq \hbar}(t_j-t_{j-1})\right]^{-1}+\varepsilon_0.
\end{equation*}
Passing to the limit for $\ell\rightarrow +\infty$, we arrive at a contradiction and the result follows.

\vskip 5pt

\emph{Case 2. $\widehat{\eta}\not= 0$.}  In this case, we first note that
\begin{equation}\label{theorem-3}
    \|e^{-P^\top t}\widehat{\eta}\|_{\mathbb{R}^n}\leq e^{-\rho_1 t}\|\eta\|_{\mathbb{C}^n}=e^{-\rho_1 t}
    \;\;\mbox{for each}\;\;t\in\mathbb{R}^+.
\end{equation}
Since $[A,{\mathcal B}_\hbar,\Lambda_\hbar]$ is $(\varepsilon_0$-$\text{GCAC})_{\hbar}$,
for each $\ell\geq 2\varepsilon_0/\|\hat{\eta}\|_{\mathbb{R}^n}$, there exists a control
$(u_j^{(\ell)})_{j\in\mathbb{N}^+}\in\mathcal{U}, k_\ell\in\mathbb{N}^+$ and
$f_\ell\in B_{\varepsilon_0}(0)$ so that
\begin{equation}\label{theorem-4}
     e^{A t_{k_\ell}}\ell\widehat{\eta} e_1+\sum_{j=1}^{k_\ell}
     e^{A(t_{k_\ell}-t_j)}B_{\nu(j)}u_j^{(\ell)}=f_\ell.
\end{equation}
Recall that  $e^{At}=e^{\triangle_n t}e^{P t}=e^{P t}e^{\triangle_n t}$ for each $t\in \mathbb{R}^+$.
It follows from (\ref{theorem-4}) that
\begin{equation*}
e^{-\lambda_1 t_{k_\ell}}e^{P t_{k_\ell}}\ell\widehat{\eta}e_1+
\sum_{j=1}^{k_\ell} e^{P(t_{k_\ell}-t_j)}e^{\triangle_n (t_{k_\ell}-t_j)} B_{\nu(j)} u_j^{(\ell)}=f_\ell.
\end{equation*}
This implies that
\begin{equation*}
\ell\widehat{\eta}e_1+
e^{\lambda_1 t_{k_\ell}}\sum_{j=1}^{k_\ell} e^{-P t_j} e^{\triangle_n (t_{k_\ell}-t_j)} B_{\nu(j)} u_j^{(\ell)}
=e^{\lambda_1 t_{k_\ell}} e^{-P t_{k_\ell}}f_\ell.
\end{equation*}
Hence,
\begin{eqnarray*}
&&\ell\|\widehat{\eta}\|^2_{\mathbb{R}^n}=\langle \ell\widehat{\eta} e_1,\widehat{\eta} e_1\rangle_{(L^2(\Omega))^n}\\
&=&\langle \widehat{\eta}e_1,e^{\lambda_1 t_{k_\ell}}e^{-P t_{k_\ell}}f_\ell\rangle_{(L^2(\Omega))^n}
-\left\langle \widehat{\eta}e_1,e^{\lambda_1 t_{k_\ell}}\sum_{j=1}^{k_\ell} e^{-Pt_j}e^{\triangle_n(t_{k_\ell}-t_j)}
B_{\nu(j)}u_j^{(\ell)}\right\rangle_{(L^2(\Omega))^n}\\
&=&e^{\lambda_1 t_{k_\ell}}\langle e^{-P^\top t_{k_\ell}}\widehat{\eta}e_1,f_\ell\rangle_{(L^2(\Omega))^n}
-e^{\lambda_1 t_{k_\ell}}\sum_{j=1}^{k_\ell}\langle e^{-P^\top t_j}\widehat{\eta} e^{\triangle (t_{k_\ell}-t_j)}e_1,
\chi_{\omega_{\nu(j)}}Q_{\nu(j)}u_j^{(\ell)}\rangle_{(L^2(\Omega))^n},
\end{eqnarray*}
which, combined with (\ref{theorem-3}), indicates that
\begin{eqnarray}\label{theorem-5}
\ell\|\widehat{\eta}\|^2_{\mathbb{R}^n}
&\leq& e^{-(\rho_1-\lambda_1)t_{k_\ell}}\varepsilon_0+
\displaystyle{\sum_{j=1}^{k_\ell}} e^{-(\rho_1-\lambda_1)t_j}\|Q_{\nu(j)}\|_{\mathbb{R}^{n\times m}}\nonumber\\
&\leq& e^{-(\rho_1-\lambda_1)t_{k_\ell}}\varepsilon_0+
\displaystyle{\max_{1\leq j\leq \hbar}}\|Q_j\|_{\mathbb{R}^{n\times m}}
\displaystyle{\sum_{j=1}^{k_\ell}} e^{-(\rho_1-\lambda_1)t_j}.
\end{eqnarray}
By similar arguments as those to show (\ref{theorem-2}), we obtain from (\ref{theorem-5}) that
\begin{eqnarray*}
\ell\|\widehat{\eta}\|^2_{\mathbb{R}^n}
&\leq& e^{-(\rho_1-\lambda_1)t_{k_\ell}}\varepsilon_0+
\displaystyle{\max_{1\leq j\leq \hbar}}\|Q_j\|_{\mathbb{R}^{n\times m}}
(\rho_1-\lambda_1)^{-1}\left[\min_{1\leq j\leq \hbar}(t_j-t_{j-1})\right]^{-1}\\
&\leq&\varepsilon_0+\displaystyle{\max_{1\leq j\leq \hbar}}\|Q_j\|_{\mathbb{R}^{n\times m}}
(\rho_1-\lambda_1)^{-1}\left[\min_{1\leq j\leq \hbar}(t_j-t_{j-1})\right]^{-1}.
\end{eqnarray*}
Passing to the limit for $\ell\rightarrow +\infty$ in the latter inequality, we arrive at a
contradiction and the result follows.
\par
In summary, we finish the proof of Theorem~\ref{20-2-28}.

\section{Proof of Theorem~\ref{Intro-7}}

Before giving the proof of Theorem~\ref{Intro-7}, we first introduce the following notations.
For any fixed $\widetilde{P}\in\mathbb{R}^{n\times n}$
and $\widetilde{Q}\in\mathbb{R}^{n\times m}$, we let
\begin{equation*}
d_{\widetilde{P}}:=\min\{\pi/|\mbox{Im}\,\lambda| :  \lambda\in\sigma(\widetilde{P})\}
\end{equation*}
and
\begin{equation*}
q(\widetilde{P},\widetilde{Q}):=\max\{\mbox{dim}\mathcal{V}_{\widetilde{P}}(\widetilde{q}):
\widetilde{q}\;\;\mbox{is a column of}\;\;\widetilde{Q}\},
\end{equation*}
where $\mathcal{V}_{\widetilde{P}}(\widetilde{q}):=\mbox{span}\{\widetilde{q},  \widetilde{P}\widetilde{q}, \cdots, \widetilde{P}^{n-1}\widetilde{q}\}$
and we agree that $\frac{1}{0}=+\infty$.
The following result can be found in the proof of Theorem 2.2 in \cite{Q-W}.
\begin{lemma}\label{theo-1} Let $\widetilde{P}\in \mathbb{R}^{n\times n}$ and
$\widetilde{Q}\in \mathbb{R}^{n\times m}$. For each increasing strictly sequence
$\{\tau_i\}_{i=1}^{q(\widetilde{P},\widetilde{Q})}$ with
$\tau_{q(\widetilde{P},\widetilde{Q})}-\tau_1<d_{\widetilde{P}}$,
\begin{equation*}
    \mbox{span}\;\{e^{\widetilde{P}\tau_1}\widetilde{Q}, e^{\widetilde{P}\tau_2}\widetilde{Q},
    \cdots, e^{\widetilde{P}\tau_{q(\widetilde{P},\widetilde{Q})}}\widetilde{Q}\}
    =\mbox{span}\;\{\widetilde{Q}, \widetilde{P}\widetilde{Q}, \cdots, \widetilde{P}^{n-1}\widetilde{Q}\}.
\end{equation*}
\end{lemma}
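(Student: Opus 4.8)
The plan is to reduce the matrix identity to a one‑column statement about a cyclic subspace, then to the linear independence of finitely many ``sampled'' orbit vectors, and finally to a disconjugacy (generalized Rolle) count for a scalar constant‑coefficient linear ODE. Throughout, write $q:=q(\widetilde P,\widetilde Q)$, let $\widetilde q$ be an arbitrary column of $\widetilde Q$, put $\mathcal V:=\mathcal V_{\widetilde P}(\widetilde q)=\mathrm{span}\{\widetilde q,\widetilde P\widetilde q,\dots,\widetilde P^{\,n-1}\widetilde q\}$ and $d:=\dim\mathcal V$, so $d\le q$. By the Cayley--Hamilton theorem $\mathcal V$ is $\widetilde P$‑invariant, $A:=\widetilde P|_{\mathcal V}$ is a $d\times d$ matrix admitting $\widetilde q$ as a cyclic vector — hence its minimal and characteristic polynomials coincide — and $\sigma(A)\subseteq\sigma(\widetilde P)$. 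One inclusion in the lemma is immediate: expanding $e^{\widetilde P\tau_i}$ as a polynomial in $\widetilde P$ shows every column $e^{\widetilde P\tau_i}\widetilde q$ lies in $\mathcal V$, so summing over the columns of $\widetilde Q$ gives $\mathrm{span}\{e^{\widetilde P\tau_1}\widetilde Q,\dots,e^{\widetilde P\tau_q}\widetilde Q\}\subseteq\mathrm{span}\{\widetilde Q,\widetilde P\widetilde Q,\dots,\widetilde P^{\,n-1}\widetilde Q\}$. For the reverse inclusion it is enough, again after summing over columns, to prove the claim: the $d$ vectors $e^{A\tau_1}\widetilde q,\dots,e^{A\tau_d}\widetilde q$ are linearly independent in $\mathcal V$ — hence span it — whenever $\tau_d-\tau_1<d_{\widetilde P}$ (note $\tau_d\le\tau_q$, so this holds under the hypothesis of the lemma).

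To prove the claim I would argue by contradiction. If $\sum_{i=1}^d c_i e^{A\tau_i}\widetilde q=0$ with $c\neq0$, then applying $A^k$ for $k=0,\dots,d-1$ and using that $\{A^k\widetilde q\}_{k=0}^{d-1}$ is a basis of $\mathcal V$ forces the matrix identity $\sum_{i=1}^d c_i e^{A\tau_i}=0$. Let $S$ be the $d$‑dimensional solution space of $p(D)y=0$, where $p$ is the common minimal/characteristic polynomial of $A$ and $D=\tfrac{d}{dt}$. Since the initial data $\bigl(\psi(A^k\widetilde q)\bigr)_{k=0}^{d-1}$ of the function $t\mapsto\psi(e^{At}\widetilde q)$ range over all of $\mathbb R^d$ as $\psi$ ranges over the dual of $\mathcal V$, one may pick a linear functional $\psi$ for which $h(t):=\psi(e^{At}\widetilde q)$ has minimal polynomial under $D$ equal to $p$; then $\{h,h',\dots,h^{(d-1)}\}$ is a basis of $S$. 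From the matrix identity, $\sum_i c_i h(t+\tau_i)=\psi\bigl((\sum_i c_ie^{A\tau_i})e^{At}\widetilde q\bigr)\equiv0$; differentiating repeatedly and evaluating at $t=-\tau_1$ gives $\sum_{i=1}^d c_i g(\tau_i-\tau_1)=0$ for every $g\in S$. Thus the $d$ evaluation functionals at the distinct points $\tau_i-\tau_1\in[0,\tau_d-\tau_1]$ are linearly dependent on the $d$‑dimensional space $S$, so some nonzero $g\in S$ vanishes at all $d$ of these points.

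This is contradicted by the following disconjugacy lemma, which I would prove by induction on $d$: if $p$ is monic of degree $d$ with real coefficients and every root $\lambda$ satisfies $|\mathrm{Im}\,\lambda|<\pi/L$, then a nonzero real solution of $p(D)y=0$ has at most $d-1$ zeros in any interval of length $\le L$. Here $L:=\tau_d-\tau_1<d_{\widetilde P}\le\min\{\pi/|\mathrm{Im}\,\lambda|:\lambda\in\sigma(A)\}$, so the hypothesis applies and $d$ zeros in $[0,\tau_d-\tau_1]$ are impossible, which proves the claim and hence the lemma. In the induction one peels off a real root $\lambda$ of $p$ through the map $g\mapsto(D-\lambda)g=e^{\lambda t}\bigl(e^{-\lambda t}g\bigr)'$ together with Rolle's theorem, and a complex‑conjugate pair $\alpha\pm i\beta$ (with $|\beta|<\pi/L$) through $g=e^{\alpha t}u$, $u\mapsto u''+\beta^2u$, together with the Sturm‑type fact that between any two zeros of $u$ lying at distance $<\pi/|\beta|$ the function $u''+\beta^2u$ must vanish; since the roots of the quotient polynomial are among those of $p$, the imaginary‑part bound persists and the induction closes. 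I expect the disconjugacy count — and especially a clean proof of that ``trigonometric Rolle'' step, e.g.\ via the Wronskian $W=uv'-u'v$ with $v(t)=\sin\beta(t-x_i)$ — to be the only real obstacle; the reduction to it (Cayley--Hamilton, cyclicity, the evaluation‑functional dimension count) is routine linear algebra.
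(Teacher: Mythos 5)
The paper does not prove Lemma~\ref{theo-1}; it only cites the proof of Theorem~2.2 in \cite{Q-W}, so there is no in-paper argument to compare against, and I will instead check your proposal on its own terms. Your overall reduction is sound and is essentially the standard route to this Polya-type result: passing to a single column, using Cayley--Hamilton to get the easy inclusion and to restrict $\widetilde P$ to the cyclic subspace $\mathcal V_{\widetilde P}(\widetilde q)$ of dimension $d\le q$, upgrading a supposed dependence $\sum_i c_i e^{A\tau_i}\widetilde q=0$ to the matrix identity $\sum_i c_i e^{A\tau_i}=0$, choosing a functional $\psi$ so that $h,\dots,h^{(d-1)}$ is a basis of the solution space $S$ (possible because $\psi\mapsto(\psi(A^k\widetilde q))_{k<d}$ is onto $\mathbb R^d$), and deducing that the $d$ evaluation functionals at $\tau_i-\tau_1$ are dependent on $S$, whence some nonzero $g\in S$ vanishes at $d$ points in an interval of length $L<d_{\widetilde P}$. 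Invoking disconjugacy of $p(D)$ on such intervals then finishes the proof.

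The gap is in the complex-pair step of the disconjugacy induction. The ``Sturm-type fact'' you state --- that between any two zeros of $u$ lying at distance $<\pi/|\beta|$ the function $u''+\beta^2u$ must vanish --- is false. For instance with $\beta=1$ take $u(t)=-\tfrac12+\sin t$: its consecutive zeros $\pi/6$ and $5\pi/6$ are at distance $2\pi/3<\pi$, yet $u''+u\equiv-\tfrac12$ never vanishes. The Wronskian computation you suggest with $v(t)=\sin\beta(t-x_i)$ does not close the argument either: it gives $W(x_i)=0$, $W(x_{i+1})=-u'(x_{i+1})v(x_{i+1})\ge 0$ (taking $u>0$ on the open interval), and $W'=-(u''+\beta^2u)v$, which rules out $u''+\beta^2u>0$ throughout but is perfectly consistent with $u''+\beta^2u<0$ throughout, as the counterexample shows.

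Fortunately the induction only needs the weaker (and true) count: if $u$ has $m$ zeros in an interval $I$ with $|I|<\pi/\beta$, then $u''+\beta^2u$ has at least $m-2$ zeros in $I$. One clean way to get it: centre the comparison function at the midpoint $c$ of $[x_1,x_m]$ and take $v(t)=\cos\bigl(\beta(t-c)\bigr)$; since $x_m-x_1<\pi/\beta$, $v>0$ on all of $[x_1,x_m]$, so $W(x_i)=-u'(x_i)v(x_i)$ alternates in sign as $i$ runs through consecutive simple zeros, giving $m-1$ zeros of $W$, and Rolle applied to $W$ gives $m-2$ zeros of $W'=-(u''+\beta^2u)v$, which (because $v>0$) are zeros of $u''+\beta^2u$. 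After adding the degenerate case $((D-\alpha)^2+\beta^2)g\equiv0$ (then $u$ solves the harmonic oscillator and has at most one zero in $I$, returning you to the base case) and being a little careful about zeros of even multiplicity, the peeling induction closes and the rest of your argument is correct.
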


\vskip 5pt
\begin{proof}[{Proof of Theorem~\ref{Intro-7}}] Since rank$(\lambda I_n-P, Q_1, Q_2, \cdots, Q_\hbar)=n$ for each $\lambda\in \mathbb{C}$, by Lemma 3.3.7 in \cite{Sontag},
we have that
\begin{equation}\label{theo-2}
\mbox{rank}(Q_1, \cdots, Q_\hbar, P Q_1, \cdots, P Q_\hbar, \cdots, P^{n-1} Q_1, \cdots, P^{n-1} Q_\hbar)
=n.
\end{equation}
Set
\begin{equation}\label{theo-3}
q_j:=q(-P,Q_{j})-1\;\;\mbox{for each}\;\; 1\leq j\leq \hbar\;\;\mbox{and}\;\; q:=\max_{1\leq j\leq \hbar} q_j.
\end{equation}
Let  $t_\hbar$ be a fixed positive constant satisfying that
\begin{equation}\label{theo-4}
q t_\hbar<d_{-P}.
\end{equation}
We arbitrarily choose $\{t_j\}_{j=0}^{\hbar-1}$ with $0=t_0<t_1<\cdots<t_{\hbar-1}<t_\hbar$.
For each $\ell\in\mathbb N^+$, we define $\{t_{j+\ell\hbar}\}_{j=1}^{\hbar}$ as follows:
\begin{equation}\label{theo-5}
   t_{j+\ell\hbar}:=t_j+\ell t_\hbar\;\;\mbox{for all}\;\;1\leq j\leq \hbar.
\end{equation}
It is clear that $\Lambda_\hbar:=\{t_j\}_{j\in\mathbb N}\in {\mathcal M}_\hbar.$

We next claim that the system $[A,\mathcal{B}_\hbar,\Lambda_\hbar]$ is $(\text{GCAC})_\hbar$.
Indeed, for each $1\leq j\leq \hbar$, by (\ref{theo-3})-(\ref{theo-5}), we get that
\begin{equation}\label{theo-6}
t_{j+q_j\hbar}-t_{j}\leq t_{j+q\hbar}-t_{j}=q t_{\hbar}<d_{-P}.
\end{equation}
According to Lemma~\ref{theo-1} (where $\widetilde{P}, \widetilde{Q}$ and
$\{\tau_i\}_{i=1}^{q(\widetilde{P},\widetilde{Q})}$ are replaced by $-P, Q_j$
and $\{t_{j+(i-1)\hbar}\}_{i=1}^{q(-P, Q_j)}$, respectively), it follows from (\ref{theo-3})
and (\ref{theo-6}) that
\begin{eqnarray*}
\mbox{span}\{Q_j, (-P) Q_j, \cdots, (-P)^{n-1}Q_j\}&=&
\mbox{span}\{e^{-P t_j}Q_j, e^{-P t_{j+\hbar}}Q_j, \cdots, e^{-P t_{j+q_{j\hbar}}}Q_j\}\\
&\subset&\mbox{span}\{e^{-P t_j}Q_j, e^{-P t_{j+\hbar}}Q_j,\cdots, e^{-P t_{j+q\hbar}}Q_j\}.
\end{eqnarray*}
This implies that for each $1\leq j\leq \hbar$,
\begin{eqnarray*}
\mbox{span}\{Q_j, P Q_j, \cdots, P^{n-1}Q_j\}&=&
\mbox{span}\{Q_j, (-P) Q_j, \cdots, (-P)^{n-1}Q_j\}\\
&\subset&\mbox{span}\{e^{-P t_j}Q_j, e^{-P t_{j+\hbar}}Q_j,\cdots, e^{-P t_{j+q\hbar}}Q_j\}.
\end{eqnarray*}
Hence,
\begin{eqnarray*}
&&\mbox{span}\{Q_1, \cdots, Q_\hbar, P Q_1,\cdots, P Q_\hbar, \cdots, P^{n-1} Q_1, \cdots, P^{n-1}Q_\hbar\}\\
&\subset&\mbox{span}\{e^{-P t_1}Q_1, \cdots, e^{-P t_{\hbar}}Q_\hbar,
\cdots, e^{-P t_{1+q\hbar}}Q_1, \cdots, e^{-P t_{\hbar+q\hbar}}Q_\hbar\},
\end{eqnarray*}
which, combined with (\ref{theo-2}) and (\ref{Intro-4}), indicates that
\begin{equation*}
   \mbox{rank}(e^{-P t_1}Q_{\nu(1)}, e^{-P t_2}Q_{\nu(2)},\cdots, e^{-P t_{(q+1)\hbar}}Q_{\nu((q+1)\hbar)})=n.
\end{equation*}
This, along with Theorem~\ref{Intro-6}, implies Theorem~\ref{Intro-7}.
\end{proof}

\section{Appendices}
\subsection{Appendix A}
    In this subsection, we prove that the system $[A,\mathcal{B}_{\hbar},\Lambda_{\hbar}]$ is $(\text{GCAC})_\hbar$ if and only if
    {\textbf{Claim (H)}} holds.
\par
Indeed, let $T:=\max_{0\leq k< \hbar}|t_{k+1}-t_k|$
   and $M:=\max_{0\leq t\leq T} \|e^{At}\|_{\mathcal L((L^2(\Omega))^n;(L^2(\Omega))^n)}$.
   Since $\Lambda_\hbar\in \mathcal{M}_\hbar$, we have that
   \begin{equation}\label{20-3-1}
     T=\max_{ k\in\mathbb N}|t_{k+1}-t_k|.
   \end{equation}
\par
On one hand,  if the system $[A,\mathcal{B}_{\hbar},\Lambda_{\hbar}]$ is $(\text{GCAC})_\hbar$, then {\textbf{Claim (H)}} follows directly from Definition~\ref{Intro-5}.
\par
    On the other hand, if  {\textbf{Claim (H)}} is true, then for each $\varepsilon>0$ and each $x_0\in (L^2(\Omega))^n$, there exists a control $u:=(u_j)_{j\in\mathbb{N}^+}\in \mathcal U$
  and $T_0\in\mathbb R^+$,  so that
  \begin{equation}\label{20-3-2}
  \|x(T_0;x_0,u,\Lambda_\hbar)\|_{(L^2(\Omega))^n}\leq { \varepsilon}/{M}.
  \end{equation}
  Let $k_0\in \mathbb N$ be such that $t_{k_0}\leq T_0< t_{k_0+1}$. Two cases may occur: $k_0=0$ or $k_0\neq0$.
\vskip 5pt
    \emph{Case 1. $k_0=0$.} In this case,  we observe that
  \begin{equation*}
  x(t_1;x_0,0,\Lambda_\hbar)=e^{At_1}x_0=e^{A(t_1-T_0)} e^{AT_0}x_0=e^{A(t_1-T_0)}x(T_0;x_0,u,\Lambda_\hbar).
  \end{equation*}
  This, along with \eqref{20-3-1} and \eqref{20-3-2}, implies that
  \begin{equation*}
  \|x(t_1;x_0,0,\Lambda_\hbar)\|_{(L^2(\Omega))^n}\leq M\cdot({\varepsilon}/{M})=\varepsilon.
  \end{equation*}
\vskip 5pt
    \emph{Case 2. $k_0\neq 0$.} In this case, we define $\widetilde u=(\widetilde u_j)_{j\in\mathbb N}$ as follows:
   \begin{equation*}
    \widetilde u_j=\begin{cases}
    u_j & \mbox{ if  }\;1\leq j\leq k_0,\\
    0   & \mbox{ if  }\;j\geq k_0+1.
    \end{cases}
   \end{equation*}
   It is clear that $\widetilde u\in\mathcal U$ and
   \begin{eqnarray}\label{20-3-3}
   &&x(t_{k_0+1};x_0,\widetilde u,\Lambda_\hbar)\nonumber\\
    &=& e^{A(t_{k_0+1}-T_0)}\left(e^{AT_0}x_0+e^{A (T_0-t_1)}B_1u_1+\cdots+ e^{A( T_0-t_{k_0})}B_{k_0}u_{k_0}\right)\nonumber\\
    &=&e^{A(t_{k_0+1}-T_0)}x(T_0;x_0, u,\Lambda_\hbar).
   \end{eqnarray}
  It follows from \eqref{20-3-1}-\eqref{20-3-3} that
  \begin{equation*}
   \|x(t_{k_0+1};x_0,\widetilde u,\Lambda_\hbar)\|_{(L^2(\Omega))^n}\leq M\cdot({\varepsilon}/{M})=\varepsilon.
  \end{equation*}
   In summary, the system $[A,\mathcal{B}_{\hbar},\Lambda_{\hbar}]$ is $(\text{GCAC})_\hbar$. The proof is completed.

\subsection{Appendix B}

\noindent\textit{Proof of Corollary 1.6.}
Firstly, we claim that
\begin{equation}\label{4-24-2}
 [A,\mathcal B_\hbar, \Lambda_\hbar] \textrm{ is constrained null controllable } \Rightarrow \sigma(P)\subset \{\rho\in\mathbb C:\,  Re(\rho)\leq \lambda_1\}.
\end{equation}
 Otherwise, there would exist a $\rho\in\sigma(P)$ so that $\mbox{Re}(\rho)>\lambda_1$, which, combined with Theorem \ref{20-2-28}, indicates that the system $ [A,\mathcal B_\hbar, \Lambda_\hbar]$ is not constrained null controllable. This leads to a contradiction. Hence, \eqref{4-24-2} follows.

 Next, we show that
 \begin{equation}\label{4-24-3}
 \sigma(P)\subset \{\rho\in\mathbb C:\,  \mbox{Re}(\rho)\leq \lambda_1\} \Rightarrow [A,\mathcal B_\hbar, \Lambda_\hbar] \textrm{ is constrained null controllable. }
 \end{equation}
Its proof is split into the following three steps.
\vskip 5pt
\emph{Step 1. We prove that there is a constant $C(k^*)>0$ so that
\begin{equation}\label{20-4-23-1}
\|e^{A^*t_{k^*}}z\|^2_{(L^2(\Omega))^n}\leq C(k^*)\sum_{j=1}^{k^*}\|B_{\nu(j)}^* e^{A^*(t_{k^*}-t_j)}z\|_{(L^2(\Omega))^m}^2 \textrm{ for each }z\in (L^2(\Omega))^n.
\end{equation}}

Indeed,  according to  \eqref{yu-3-6-2} and Lemma \ref{Preli-11}, there is a constant $C(k^*)>0$ so that
\begin{equation*}
\|v\|_{\mathbb R^n}^2 \leq C(k^*) \sum_{j=1}^{k^*} \|Q_{\nu(j)}^\top e^{-P^\top t_j}v\|_{\mathbb R^m}^2 \textrm{ for each }v\in \mathbb R^n.
\end{equation*}
This, along with  \eqref{Preli-2:3} and \eqref{20-2-13},  yields  that for each $z\in (L^2(\Omega))^n,$
\begin{eqnarray}\label{4-24-1}
\|e^{A^*t_{k^*}}z\|^2_{(L^2(\Omega))^n}&=&\|e^{\Delta_n t_{k^*}}e^{P^\top t_{k^*}}z\|^2_{(L^2(\Omega))^n}\nonumber\\
        &\leq& C(k^*)\sum_{j=1}^{k^*} \|Q_{\nu(j)}^\top e^{-P^\top t_j}e^{\Delta_n t_{k^*}} e^{P^{\top} t_{k^*}}z\|^2_{(L^2(\Omega))^m}\\
&\leq& C(k^*)\sum_{j=1}^{k^*} \|Q_{\nu(j)}^\top e^{(\Delta_n+P^\top)(t_{k^*}-t_j)}z\|^2_{(L^2(\Omega))^m}\nonumber
\end{eqnarray}
Here, we used the fact that $\|e^{\Delta_m t}\|_{\mathcal L((L^2(\Omega))^m;(L^2(\Omega))^m)}\leq 1$ for each $t\in\mathbb R^+$.
Since $\Omega=\omega=\cap_{k=1}^\hbar \omega_k$,  \eqref{20-4-23-1} follows from  \eqref{4-24-1} and \eqref{Intro-4(1)} immediately.

\vskip 5pt
\emph{Step 2.  We show that
for each $ x_0\in (L^2(\Omega))^n$, there is a control
$ u:=( u_j)_{j\in\mathbb N^+}\in l^2(\mathbb N^+;(L^2(\Omega))^m)$   so that
\begin{equation}\label{4-24-5}
x(t_{k^*}; x_0,u,\Lambda_\hbar)=0
\textrm{ and }
\| u\|_{l^2(\mathbb N^+;(L^2(\Omega))^m)}\leq  \sqrt{C(k^*)}\| x_0\|_{(L^2(\Omega))^n},
\end{equation}
where $C(k^*)$ is the constant given in \eqref{20-4-23-1}.}

To this end, we set
\begin{equation*}
 X:=\left\{ \left(B^*_{\nu(1)}e^{A^*(t_{k^*}-t_1)}z, \cdots, B^*_{\nu(k^*-1)}e^{A^*(t_{k^*}-t_{k^*-1})}z, B^*_{\nu(k^*)}z\right): z\in (L^2(\Omega))^n\right\}
\end{equation*}
and
\begin{equation*}
Y:=\underbrace{(L^2(\Omega))^m\times\cdots\times (L^2(\Omega))^m}_{k^*}.
\end{equation*}
It is clear that $X$ is a linear subspace of $Y$. We define a linear functional $F: X\to \mathbb R$ by setting
\begin{equation}\label{4-29-1}
F\left(\big(B^*_{\nu(1)}e^{A^*(t_{k^*}-t_1)}z, \cdots, B^*_{\nu(k^*-1)}e^{A^*(t_{k^*}-t_{k^*-1})}z, B^*_{\nu(k^*)}z\big)\right):=-\langle x_0, e^{A^*t_{k^*}}z \rangle_{(L^2(\Omega))^n}.
\end{equation}
It follows from \eqref{20-4-23-1} that $F$ is well defined and
\begin{equation*}
\begin{split}
&\left|F\left(\big(B^*_{\nu(1)}e^{A^*(t_{k^*}-t_1)}z, \cdots, B^*_{\nu(k^*-1)}e^{A^*(t_{k^*}-t_{k^*-1})}z, B^*_{\nu(k^*)}z\big)\right)\right|\\
\leq\quad &\sqrt{C(k^*)}\|x_0\|_{(L^2(\Omega))^n} \sqrt{\sum_{j=1}^{k^*}\|B^*_{\nu(j)}e^{A^*(t_{k^*}-t_j)}z\|^2_{(L^2(\Omega))^m}}.
\end{split}
\end{equation*}
Hence, $F$ is a linear bounded functional on $X$ and
\begin{equation*}
\|F\|_{\mathcal L(X;\mathbb R)}\leq \sqrt{C(k^*)}\|x_0\|_{(L^2(\Omega))^n}.
\end{equation*}
According to the Hahn-Banach theorem, there is a linear bounded functional $G:\,Y\to \mathbb R$ so that
$G=F$ on $X$ and
\begin{equation*}
\|G\|_{\mathcal L(Y;\mathbb R)}=\|F\|_{\mathcal L(X;\mathbb R)}\leq \sqrt{C(k^*)}\|x_0\|_{(L^2(\Omega))^n}.
\end{equation*}
Then by the Riesz representation theorem, there exists a function $(g_1, g_2, \cdots, g_{k^*})\in Y$ so that for any
$(v_1, v_2, \cdots, v_{k^*})\in Y$,
\begin{equation*}
G\left((v_1,v_2,\cdots, v_{k^*})\right)= \sum_{j=1}^{k^*} \langle g_j,v_j\rangle_{(L^2(\Omega))^m}
\end{equation*}
 and
\begin{equation}\label{4-29-3}
\sum_{j=1}^{k^*}\|g_j\|_{(L^2(\Omega))^m}^2=\|G\|^2_{\mathcal L(Y;\mathbb R)}\leq C(k^*)\|x_0\|^2_{(L^2(\Omega))^n}.
\end{equation}
Thus,
\begin{equation*}
F\left(\big(B^*_{\nu(1)}e^{A^*(t_{k^*}-t_1)}z, \cdots, B^*_{\nu(k^*-1)}e^{A^*(t_{k^*}-t_{k^*-1})}z, B^*_{\nu(k^*)}z\big)\right)\\
= \sum_{j=1}^{k^*}\langle g_j, B^*_{\nu(j)}e^{A^*(t_{k^*}-t_j)}z\rangle_{(L^2(\Omega))^m}.
\end{equation*}
This, along with \eqref{4-29-1}, implies that
\begin{equation*}
\left\langle e^{At_{k^*}}x_0+\sum_{j=1}^{k^*}e^{A(t_{k^*}-t_{j})}B_{\nu(j)}g_j, z\right\rangle_{(L^2(\Omega))^n}=0 \textrm{ \;\; for each }z\in {(L^2(\Omega))^n},
\end{equation*}
which indicates that
\begin{equation}\label{4-29-4}
e^{At_{k^*}}x_0+\sum_{j=1}^{k^*}e^{A(t_{k^*}-t_{j})}B_{\nu(j)}g_j=0.
\end{equation}
Define $u:=(u_j)_{j\in\mathbb N^+}$ with
\begin{equation}\label{4-29-5}
u_j:=\begin{cases}
g_j& \textrm{if}\;\; 1\leq j\leq k^*,\\
0& \textrm{if}\;\;  j\geq  k^*+1.
\end{cases}
\end{equation}
 Then \eqref{4-24-5}  follows from  \eqref{4-29-3}-\eqref{4-29-5} immediately.

\vskip 5pt

\emph{Step 3.  We finish the proof of \eqref{4-24-3}.}

For each $x_0\in (L^2(\Omega))^n$, we fix it. Let
\begin{equation}\label{4-23-4}
\varepsilon:=(M\sqrt{C(k^*)})^{-1}
\textrm{ and }
M:=\max\left\{\sum_{0\leq k\leq \hbar-1}\|e^{A(t_\hbar-t_k)}\|_{\mathcal L((L^2(\Omega))^n;(L^2(\Omega))^n)},1\right\}.
\end{equation}
By $(i)$ in Theorem \ref{Intro-6}, we have that the system $[A,\mathcal B_\hbar, \Lambda_\hbar]$ is $\textrm{(GCAC)}_\hbar$.
Hence, there is  a $\hat k\in \mathbb N$
and a control $\hat v:=(\hat v_j)_{j\in\mathbb N^+}\in\mathcal U$ so that
\begin{equation}\label{4-23-7}
x(t_{\hat k};x_0,\hat v,\Lambda_\hbar)\in B_\varepsilon(0).
\end{equation}
Two cases may occur: $\hat k=0$ or $\hat k\neq 0$.

\textit{Case 1.} $\hat k=0$. In this case, $x_0\in B_\varepsilon(0)$. On one hand, according to \eqref{4-24-5},
there is a control  $u:=(u_j)_{j\in\mathbb N^+}\in l^2(\mathbb N^+;(L^2(\Omega))^m)$ so that
\begin{equation}\label{4-23-6}
x(t_{k^*};x_0,u,\Lambda_\hbar)=0,
\end{equation}
and
\begin{equation}\label{4-23-5}
\|u_j\|_{(L^2(\Omega))^m}\leq \|u\|_{l^2(\mathbb N^+;(L^2(\Omega))^m)}\leq  \sqrt{C(k^*)}\varepsilon
\textrm{ for each } j\in \mathbb N^+.
\end{equation}
On the other hand, it follows from \eqref{4-23-4}  that
$
\sqrt{C(k^*)}\varepsilon= M^{-1}\leq 1.
$
This, along with \eqref{4-23-5}, implies that
\begin{equation*}
\|u_j\|_{(L^2(\Omega))^m}\leq 1 \textrm{ for each } j\in\mathbb N^+,
\end{equation*}
which indicates   $u\in\mathcal U$. Thus, by \eqref{4-23-6},  we get that the system $[A,\mathcal B_\hbar, \Lambda_\hbar]$
is constrained null controllable.

\textit{Case 2. } $\hat k\neq 0$. In this case, we take a $\gamma\in\mathbb N$ so that $\gamma \hbar \leq \hat k<(\gamma+1)\hbar$. Let
\begin{equation}\label{4-23-8}
\widetilde v_j:=\begin{cases}
\hat v_j &\textrm{if}\;\;1\leq j\leq \hat k,\\
0 & \textrm{if}\;\;j\geq \hat k+1,
\end{cases}
\end{equation}
and $\widetilde v:=(\widetilde v_j)_{j\in \mathbb N^+}.$ Since $\hat v\in \mathcal U$, it is clear that
\begin{equation}\label{4-23-11}
\widetilde v\in \mathcal U.
\end{equation}
On one hand, by \eqref{Intro-2},  \eqref{4-23-8}, \eqref{4-23-7} and the second definition of \eqref{4-23-4}, we obtain that
\begin{eqnarray}\label{4-23-9}
\|x((\gamma+1)\hbar;x_0,\widetilde v,\Lambda_\hbar)\|_{(L^2(\Omega))^n}
&=&\| e^{A(t_{(\gamma+1)\hbar}-t_{\hat k})}x(t_{\hat k};x_0,\hat v,\Lambda_\hbar)\|_{(L^2(\Omega))^n}\\
&=&\|e^{A(t_{\hbar}-t_{\hat k-\gamma \hbar})}x(t_{\hat k};x_0,\hat v,\Lambda_\hbar)\|_{(L^2(\Omega))^n}\leq M\varepsilon.\nonumber
\end{eqnarray}
Let $\hat x_0:= x((\gamma+1)\hbar;x_0,\widetilde v,\Lambda_\hbar).$ By \eqref{4-24-5} and \eqref{4-23-9},
there is a control $\hat u:=(\hat u_j)_{j\in\mathbb N^+}\in l^2(\mathbb{N}^+;(L^2(\Omega))^m)$ so that
\begin{equation}\label{4-23-13}
x(t_{k^*};\hat x_0,\hat u,\Lambda_\hbar)=0
\end{equation}
and
\begin{equation}\label{4-23-10}
\|\hat u_j\|_{(L^2(\Omega))^m}\leq \|\hat u\|_{l^2(\mathbb N^+;(L^2(\Omega))^m)}
\leq \sqrt{C(k^*)} M\varepsilon \textrm{ \;\;for each }j\in\mathbb N^+.
\end{equation}
On the other hand, it follows from  \eqref{4-23-10} and the first definition of  \eqref{4-23-4} that
\begin{equation}\label{4-23-12}
\|\hat u_j\|_{(L^2(\Omega))^m}\leq 1 \textrm{ for each }j\in\mathbb N^+.
\end{equation}
Define
\begin{equation*}
u_j:=\begin{cases}
\widetilde  v_j &\textrm{if}\;\;1\leq j\leq (\gamma+1)\hbar,\\
\hat u_{j-(\gamma+1)\hbar}&\textrm{if}\;\;j\geq (\gamma+1)\hbar+1,
\end{cases}
\end{equation*}
and $u:=(u_j)_{j\in\mathbb N^+}$. According to  \eqref{4-23-11} and \eqref{4-23-12},  $u\in \mathcal U$. Moreover, by
\eqref{4-23-13} and \eqref{Intro-2},  we observe that
\begin{eqnarray*}
&&x(t_{(\gamma+1)\hbar+k^*};x_0,u,\Lambda_\hbar)\\
&=&x(t_{(\gamma+1)\hbar+k^*}-t_{(\gamma+1)\hbar};x(t_{(\gamma+1)\hbar};x_0,\widetilde v, \Lambda_\hbar), \hat u,\Lambda_\hbar)\\
&=&x(t_{k^*};\hat x_0,\hat u,\Lambda_\hbar)=0.
\end{eqnarray*}
This implies that the system $[A,\mathcal B_\hbar,\Lambda_\hbar]$ is constrained null controllable.

In summary,  we finish the proof of \eqref{4-24-3}.

\begin{remark}
We observe that if the claim of \emph{Step 2} in the proof of Corollary
 \ref{yu-proposition-b-1} holds, then \eqref{20-4-23-1} is also true. Indeed, according to \eqref{4-24-5},
there exists $(u_1,\cdots,u_{k^*})\in Y$ so that
\begin{equation*}
\sum_{j=1}^{k^*}\|u_j\|_{(L^2(\Omega))^m}^2\leq C(k^*)\|x_0\|^2_{(L^2(\Omega))^n}
\end{equation*}
and
\begin{equation*}
e^{At_{k^*}}x_0+\sum_{j=1}^{k^*}e^{A(t_{k^*}-t_{j})}B_{\nu(j)}u_j=0.
\end{equation*}
It follows from the above equality that
\begin{equation*}
\langle e^{A^*t_{k^*}} z, x_0\rangle_{(L^2(\Omega))^n} =-\sum_{j=1}^{k^*}\langle B_{\nu(j)}u_j, e^{A^*(t_{k^*}-t_{j})}z\rangle_{(L^2(\Omega))^n}\textrm{\;\; for each }z\in (L^2(\Omega))^n.
\end{equation*}
We choose $x_0=e^{A^*t_{k^*}} z$. Then
\begin{eqnarray*}
\|e^{A^*t_{k^*}}z\|^2_{(L^2(\Omega))^n}
&=&-\sum_{j=1}^{k^*}\langle u_j, B_{\nu(j)}^*e^{A^*(t_{k^*}-t_{j})}z\rangle_{(L^2(\Omega))^m}\\
&\leq& \sqrt{\sum_{j=1}^{k^*}\|u_j\|^2_{(L^2(\Omega))^m}}\sqrt{\sum_{j=1}^{k^*}\|B^*_{\nu(j)}e^{A^*(t_{k^*}-t_j)}z\|^2_{(L^2(\Omega))^m}}\\
&\leq& \sqrt{ C(k^*)}\|e^{A^*t_{k^*}}z\|_{(L^2(\Omega))^n}\sqrt{\sum_{j=1}^{k^*}\|B^*_{\nu(j)}e^{A^*(t_{k^*}-t_j)}z\|^2_{(L^2(\Omega))^m}}.
\end{eqnarray*}
This implies \eqref{20-4-23-1} immediately.
\end{remark}

\subsection{Appendix C}
%
%
%
%
%

\noindent {\bf Example.}
Let $\hbar:=1$, $t_k:=k$ for all $k\in \mathbb N$ and $\omega_1\subset\Omega$. Then $\Lambda_\hbar:=\{t_k\}_{k\in\mathbb N}\in \mathcal M_\hbar(=\mathcal{M}_1)$ and $\omega_{\nu(k)}=\omega_1$ for each $k\in\mathbb{N}^+$.
Let $n:=2$, $m:=1$, $P:=\lambda_1 I_2$ and $Q_1:=\left(
\begin{array}{c}
 0\\
1
\end{array}
\right)$.
    It is clear that $\langle P\eta,\eta\rangle_{\mathbb{R}^2}= \lambda_1\|\eta\|^2_{\mathbb{R}^2}$ for each $\eta\in\mathbb{R}^2$,  $\sigma(P)=\{\lambda_1\}$ and
\begin{equation*}
e^{-Pt_k}Q_{\nu(k)}
=e^{-Pt_k}Q_1=e^{-\lambda_1 t_k} \left(
\begin{array}{c}
 0\\
1
\end{array}
\right)
\mbox{\;\;for each } k\in\mathbb N^+.
\end{equation*}
Hence, \eqref{yu-3-6-2} does not hold.

Now we claim that the system  $[A,\mathcal B_\hbar,\Lambda_\hbar]$  is not $(\text{GCAC})_\hbar$.
To this end, for any fixed $\varepsilon>0$, we let $x_0:=2\varepsilon (1,0)^\top e_1\in (L^2(\Omega))^2$. For each $k\in \mathbb N^+$ and $u=(u_j)_{j\in\mathbb N^+}\in \mathcal{U}$, we observe that
\begin{equation*}
\begin{split}
e^{A(t_k-t_j) }\chi_{\omega_{\nu(j)}}Q_{\nu(j)} u_j\,=\;\;&e^{\triangle (t_k-t_j)}e^{P(t_k-t_j)}\chi_{\omega_1}Q_1u_j\\
=\;\;&e^{\lambda_1(t_k-t_j)}e^{\triangle(t_k-t_j)} \chi_{\omega_1}u_j
\left(
\begin{array}{c}
0\\
1
\end{array}
\right),
\end{split}\;\;\mbox{for each}\;\;1\leq j\leq k,
\end{equation*}
 and
\begin{equation*}
e^{At_k}x_0
=2\varepsilon e^{\triangle t_k}e_1e^{Pt_k}
\left(\begin{array}{c}
1\\
0
\end{array} \right)
=2\varepsilon e_1
\left(\begin{array}{c}
1\\
0
\end{array} \right).
\end{equation*}
   These imply that
\begin{eqnarray*}
\left\|x(t_k;x_0,u,\Lambda_\hbar)\right\|_{(L^2(\Omega))^2}
&=&\big\|e^{At_k}x_0+\sum_{j=1}^k e^{A(t_k-t_j)}\chi_{\omega_{\nu(j)}}Q_{\nu(j)} u_j\big\|_{(L^2(\Omega))^2}\\
&=&\Big\|2\varepsilon e_1
\left(\begin{array}{c}
1\\
0
\end{array} \right)
+
\left(\sum_{j=1}^ke^{\lambda_1(t_k-t_j)}e^{\triangle(t_k-t_j)}\chi_{\omega_1} u_j\right)
\left(
\begin{array}{c}
0\\
1
\end{array}
\right)
\Big\|_{(L^2(\Omega))^2}
\geq 2\varepsilon.
\end{eqnarray*}
  From the latter, it follows that the system $[A,\mathcal B_\hbar,\Lambda_\hbar]$ is not $(\varepsilon$-$\text{GCAC})_\hbar$ by the arbitrariness of $k$ and $u$.
    Thus, the system  $[A,\mathcal B_\hbar,\Lambda_\hbar]$  is not $(\text{GCAC})_\hbar$.
\subsection{Appendix D}

\begin{proof}[Proof of Lemma \ref{Preli-11}]

Firstly, we prove $(i)\Rightarrow(ii)$.
By contradiction, if (\ref{20-2-12}) were not true, then for each $i\in\mathbb{N}^+$,
there would exist a $v_i\in \mathbb{R}^n$ with $\|v_i\|_{\mathbb{R}^n}=1$ so that
\begin{equation}\label{Preli-2:7}
\|v_i\|^2_{\mathbb{R}^n}> i\sum_{j=1}^k \|\widetilde Q_{j}^\top e^{-\widetilde P^{\top}\tau_j}v_i\|^2_{\mathbb{R}^m}.
\end{equation}
Since $\|v_i\|_{\mathbb{R}^n}=1$ for each $i\in\mathbb{N}^+$, there is a subsequence of $\{v_i\}_{i\in\mathbb{N}^+}$,
still denoted by itself, and $v^*\in \mathbb{R}^n$ with $\|v^*\|_{\mathbb{R}^n}=1$
so that $v_i\to v^*$ as $i\to +\infty$. Then
$$
\sum_{j=1}^k \|\widetilde Q_{j}^\top e^{-\widetilde P^{\top}\tau_j}v_i\|^2_{\mathbb{R}^m}\to
 \sum_{j=1}^k \|\widetilde Q_{j}^\top e^{-\widetilde P^{\top}\tau_j}v^*\|^2_{\mathbb{R}^m}\;\;\mbox{as}\;\;i\to +\infty.
$$
This, along with (\ref{Preli-2:7}), implies that
$$
\sum_{j=1}^k \|\widetilde Q_{j}^\top e^{-\widetilde P^{\top}\tau_j}v^*\|^2_{\mathbb{R}^m}=0,
$$
which, combined with $(i)$, indicates that $v^*=0$. It contradicts to the fact that
$\|v^*\|_{\mathbb{R}^n}=1$. Hence, (\ref{20-2-12}) holds.

Next, we show that $(ii)\Rightarrow(i)$. Otherwise,  there would exist a $v^*\in\mathbb R^n\backslash\{0\}$ so that
$$
\widetilde Q_j^\top e^{-\widetilde P^\top \tau_j}v^*=0\, \textrm{ for each }1\leq j\leq k.
$$
It follows from this and $(ii)$ that
$
\|v^*\|_{\mathbb R^n}=0,
$
which  contradicts to the fact that $v^*\neq 0$.

This completes the proof.
\end{proof}

\end{document}